\newcolumntype{Y}{>{\small\raggedright\arraybackslash}X}
\theoremstyle{definition}
\newtheorem{thm}{Theorem}[section]
\newtheorem{defi}[thm]{Definition}
\newtheorem{prop}[thm]{Proposition}
\newtheorem{cor}[thm]{Corollary}
\newtheorem{lem}[thm]{Lemma}
\newtheorem{rmk}[thm]{Remark}
\newtheorem*{notations}{Notations}
\newtheorem*{acknowledgement}{Acknowledgement}
\theoremstyle{remark}
\begin{document}

\title[Exceptional sequences on some surfaces isogenous to a higher product]{Exceptional sequences of maximal length on some surfaces isogenous to a higher product}

\author{Kyoung-Seog Lee}

\address{Department of Mathematics, Seoul National University, Seoul 151-747, Korea}

\email{kyoungseog02@gmail.com}

\keywords{Derived category, exceptional sequence, quasiphantom category, surface isogenous to a higher product}

\begin{abstract}
Let $S=(C\times D)/G$ be a surface isogenous to a higher product of unmixed type with $p_g=q=0$, $G=(\mathbb{Z}/2)^3$ or $(\mathbb{Z}/2)^4$. We construct exceptional sequences of line bundles of maximal length and quasiphantom categories on $S$.
\end{abstract}

\maketitle

\section*{Introduction}

Derived category of an algebraic variety is an interesting invariant
containing much information about the variety. Algebraic varieties
having equivalent derived categories share many geometric properties
\cite{Huy}.

One of the most powerful tools to study derived categories is the notion of
semiorthogonal decomposition. A semiorthogonal
decomposition divides a derived category into simpler
subcategories and we can study the derived category via these simpler subcategories. One way to get a semiorthogonal decomposition is to construct an exceptional sequence. When we have an exceptional sequence then we get an admissible triangulated subcategory generated by the exceptional sequence and its orthogonal complement which give a semiorthogonal decomposition.

There are lots of studies about semiorthogonal decompositions of
derived categories of smooth projective varieties, especially for
rational or Fano varieties. Many rational varieties have exceptional
sequences which generate the derived categories of them. Especially
every smooth projective rational surface has an exceptional sequence
which generates its derived category \cite{King},\cite{Or}. It is
known that every toric variety also has an exceptional sequence which
generates its derived category \cite{Kaw}. For a Fano variety, the
structure sheaf is an exceptional object and there exist at least
one semiorthogonal decomposition \cite{Ku2}.

It is expected that the behaviours of derived categories of varieties
with nonnegative Kodaira dimensions will be very different from those
of rational or Fano varieties. For example it is known that there is
no nontrivial semiorthogonal decomposition for curves with genus
greater than or equal to 1 \cite{Ok} or varieties having trivial
canonical bundles. In particular they do not have any exceptional
object.

For a surface of general type with $p_g=q=0$, the structure
sheaf is an exceptional object and there is already a semiorthogonal
decomposition. If there is another exceptional object in the orthogonal complement of the structure sheaf then we can divide the orthogonal complement into two smaller pieces. Then it is an interesting question how much we
can extend the exceptional sequence in the derived category. It is
easy to show that the length of the exceptional sequence is bounded
by the rank of the Grothendieck group. When the length of
exceptional sequence is the rank of Grothendieck group we call the
exceptional sequence is of maximal length.

Recently there are some constructions of exceptional sequences of
maximal lengths on surfaces of general type with $p_g=q=0$. In \cite{AO}, \cite{BBKS}, \cite{BBS}, \cite{C}, \cite{F}, \cite{GKMS}, \cite{GS}, \cite{Lee} the authors
constructed exceptional sequences of maximal lengths consisting of
line bundles on surfaces of general type with $p_g=q=0$. The
triangulated subcategories generated by exceptional sequences are
not full in these cases. The categories of orthogonal complements of
these exceptional sequences have vanishing Hochschild homologies and
finite Grothendieck groups. They are called the quasiphantom
categories. It seems that these semiorthogonal decompositions
contain much information about the geometry of surfaces of general
type and may provide lots of unexpected feature of the derived
categories of algebraic varieties. For example in \cite{BBS} the authors constructed the first counterexample to the nonvanishing conjecture, and in \cite{BBS2} the authors gave the first counterexample to geometric Jordan-H\"{o}lder property using the construction of \cite{BBS} (see also \cite{Ku1}).

Let $S=(C \times D)/G$ be a surface isogenous to a higher product of unmixed type with $p_g=q=0$. When $G$ is abelian, Bauer and Catanese proved there are 4 possible groups, $(\mathbb{Z}/2)^3, (\mathbb{Z}/2)^4, (\mathbb{Z}/3)^2, (\mathbb{Z}/5)^2$, and described their moduli spaces in \cite{BC}. Galkin and Shinder constructed exceptional sequences of maximal length on the Beauville surface, the $(\mathbb{Z}/5)^2$ case in \cite{GS}. Motivated by their work we constructed exceptional sequences of maximal length on the surfaces isogenous to a higher product of unmixed type with $p_g=q=0$ and $G=(\mathbb{Z}/3)^2$ in \cite{Lee}. Therefore it is a natural question whether the other surfaces isogenous to a higher product of unmixed type with $p_g=q=0$ admit exceptional sequences of maximal length when $G$ is $(\mathbb{Z}/2)^3$, or $(\mathbb{Z}/2)^4$.

In this paper we construct exceptional sequences of line bundles of maximal length on surfaces isogenous to a higher product of unmixed type with $p_g=q=0$, $G=(\mathbb{Z}/2)^3$ or $G=(\mathbb{Z}/2)^4$.

\begin{thm} Let $S=(C \times D)/G$ be a surface isogenous to a higher product of unmixed type with $p_g=q=0$, $G=(\mathbb{Z}/2)^3$ or $G=(\mathbb{Z}/2)^4$. There are exceptional sequences of line bundles of maximal length on $S$. The orthogonal complements of the admissible subcategories generated by these exceptional sequences in the derived category of $S$ are quasiphantom categories.
\end{thm}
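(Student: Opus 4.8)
The plan is to write the sequence down explicitly and by hand, working on the \'etale $G$-cover $q\colon C\times D\to S$. Since $p_g=q=0$ we have $\chi(\mathcal O_S)=1$ and $\operatorname{Ext}^\bullet(L,L)=H^\bullet(S,\mathcal O_S)=\mathbb C$ for every line bundle $L$, so every line bundle is exceptional and for a sequence $(L_1,\dots,L_n)$ of line bundles the only condition to check is the vanishing $H^\bullet(S,L_j^{-1}\otimes L_i)=0$ for $i<j$. A routine computation of the invariants ($S$ is minimal of general type with $K_S^2=8\chi(\mathcal O_S)=8$, hence $e(S)=4$, $b_2=2$, and Hodge numbers $h^{0,0}=h^{2,2}=1$, $h^{1,1}=2$, the rest zero) shows that $K_0(S)$ has rank $4$ and that $HH_\bullet(S)=\mathbb C^4$, concentrated in degree $0$. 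Thus a length-$4$ exceptional sequence of line bundles is of maximal length, and what remains is (a) to produce such a sequence and (b) to see that its orthogonal complement $\mathcal A$ is a quasiphantom.

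For (a) I would start from the explicit description of $C$, $D$ and the $G$-action in \cite{BC} (for $G=(\mathbb Z/2)^3$ one curve has genus $3$ and the other genus $5$; for $G=(\mathbb Z/2)^4$ both have genus $5$), together with the branch points and their inertia characters. Pushing $\mathcal O_C$ forward to $C/G=\mathbb P^1$ decomposes it $G$-equivariantly as $\bigoplus_{\psi\in\widehat G}\mathcal O_{\mathbb P^1}(-a_\psi)$, and similarly $\bigoplus_{\psi\in\widehat G}\mathcal O_{\mathbb P^1}(-b_\psi)$ for $D$, where $a_{1}=b_{1}=0$ and the positive integers $a_\psi,b_\psi$ ($\psi\neq 1$) are read off from the inertia data by the standard formulas for $(\mathbb Z/2)^{k}$-covers; crucially these integers are topological and are therefore constant over the whole moduli family. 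Following the strategy of \cite{GS} and \cite{Lee}, the candidate sequence is $(\mathcal O_S,L_2,L_3,L_4)$ with $L_i=\pi^*\mathcal O_{\mathbb P^1\times\mathbb P^1}(e_i,f_i)\otimes T_{\rho_i}$, where $\pi\colon S\to(C/G)\times(D/G)=\mathbb P^1\times\mathbb P^1$ is the natural degree-$|G|$ morphism, $T_{\rho_i}$ is the torsion line bundle on $S$ attached to a character $\rho_i\in\widehat G$, and $e_i,f_i\in\mathbb Z$, $\rho_i\in\widehat G$ are the parameters, with $e_1=f_1=0$, $\rho_1=1$. One cannot take all $\rho_i$ trivial: since the $a_\psi$ (and $b_\psi$) are not all equal on $\widehat G\setminus\{1\}$, the eigenvalue computation below shows that pure pullbacks from $\mathbb P^1\times\mathbb P^1$ never form an exceptional sequence of length $4$, so the torsion twists are genuinely needed.

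For the vanishing one has $H^\bullet(S,L_j^{-1}\otimes L_i)=H^\bullet\bigl(C\times D,q^*(L_j^{-1}\otimes L_i)\bigr)^G$; since $q^*(L_j^{-1}\otimes L_i)$ is a box product, the K\"unneth formula and the character decompositions above give, writing $\gamma_{ij}:=\rho_i\rho_j\in\widehat G$,
\[
H^\bullet(S,L_j^{-1}\otimes L_i)\;\cong\;\bigoplus_{\psi\in\widehat G}H^\bullet\!\bigl(\mathbb P^1,\mathcal O(e_i-e_j-a_\psi)\bigr)\otimes H^\bullet\!\bigl(\mathbb P^1,\mathcal O(f_i-f_j-b_{\psi\gamma_{ij}})\bigr).
\]
As $H^\bullet(\mathbb P^1,\mathcal O(n))=0$ precisely when $n=-1$, the sequence is exceptional if and only if for each of the six pairs $i<j$ and every $\psi\in\widehat G$ one has $a_\psi=e_i-e_j+1$ or $b_{\psi\gamma_{ij}}=f_i-f_j+1$. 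I expect this to be the main obstacle: one must find integers $e_i,f_i$ and characters $\rho_i$ satisfying all six of these combinatorial conditions simultaneously. Because the $a_\psi,b_\psi$ are small, the differences $e_i-e_j$ and $f_i-f_j$ are confined to short ranges, so this is a finite check; but it is delicate, since for each pair the shift by $\gamma_{ij}$ must bring the $D$-side pattern $\{b_\psi\}$ into agreement with the $C$-side pattern $\{a_\psi\}$ for the chosen values of $e_i-e_j$, $f_i-f_j$. I would carry this out separately for $(\mathbb Z/2)^3$ and $(\mathbb Z/2)^4$ from the inertia data of \cite{BC} (if the box-product ansatz above should turn out to be too rigid, one would enlarge the search to twists by more general torsion classes, but I expect it to suffice); and since the $a_\psi,b_\psi$ are locally constant in moduli, the sequence obtained works for every $S$ of the given type, not just a very general one.

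For (b), write $D^b(S)=\langle\mathcal A,\mathcal O_S,L_2,L_3,L_4\rangle$. Additivity of Hochschild homology over this semiorthogonal decomposition, together with $HH_\bullet(S)=\mathbb C^4$ in degree $0$ and $HH_\bullet(\langle L_i\rangle)=HH_\bullet(\mathrm{pt})=\mathbb C$, forces $HH_\bullet(\mathcal A)=0$. For the Grothendieck group, the Euler form of an exceptional collection is unimodular, so the classes $[L_i]$ span a finite-index sublattice of the numerical Grothendieck group $K_0^{\mathrm{num}}(S)$; moreover Bloch's conjecture, known for surfaces isogenous to a product, gives $\mathrm{CH}_0(S)=\mathbb Z$, and a short Chern-character computation then identifies the kernel of $K_0(S)\to K_0^{\mathrm{num}}(S)$ with the torsion subgroup $\operatorname{Tors}\operatorname{Pic}(S)\cong H_1(S,\mathbb Z)$. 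Hence $K_0(\mathcal A)=\operatorname{coker}\bigl(\bigoplus_i\mathbb Z[L_i]\to K_0(S)\bigr)$ is finite and sits in an extension $0\to H_1(S,\mathbb Z)\to K_0(\mathcal A)\to K_0^{\mathrm{num}}(S)/\langle[L_i]\rangle\to 0$. Since $q\colon C\times D\to S$ is a $G$-cover, $\pi_1(S)$ surjects onto $G$, so $H_1(S,\mathbb Z)$ surjects onto $G\neq 0$; therefore $K_0(\mathcal A)\neq 0$, the sequence is in particular not full, and $\mathcal A$ is a quasiphantom category, as claimed.
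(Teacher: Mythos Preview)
Your plan for (b) is sound and matches the paper's reasoning (the paper simply invokes Kuznetsov's additivity theorem, but your more detailed argument via Bloch's conjecture and the Euler form is correct). The problem is (a): your ansatz $L_i=\pi^*\mathcal O_{\mathbb P^1\times\mathbb P^1}(e_i,f_i)\otimes T_{\rho_i}$ with $\rho_i\in\widehat G$ cannot succeed here. These line bundles are precisely the ones whose pullbacks to $C\times D$ are box products of $G$-\emph{equivariant} line bundles on $C$ and $D$, and the author's earlier paper \cite{Lee} already proved (by exhausting the combinatorics you propose) that for $G=(\mathbb Z/2)^3$ and $G=(\mathbb Z/2)^4$ no length-$4$ exceptional sequence exists in this class. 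Your parenthetical about ``enlarging the search to more general torsion classes'' is in fact the whole point of the present paper, and it is not a minor enlargement: one must leave the world of box products of $G$-equivariant line bundles altogether.

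What the paper actually does is construct $G$-\emph{invariant} line bundles on $C$ and $D$ that are \emph{not} $G$-equivariant---specifically, certain $G$-invariant theta characteristics $\kappa_C$, $\kappa_D$ and $G$-invariant $2$-torsion bundles $\eta_C$, $\eta_D$---and then show by an explicit cocycle computation that the obstruction classes (Schur multipliers) $\alpha(\kappa_C),\alpha(\eta_D)\in H^2(G,\mathbb C^*)$ satisfy $\alpha(\kappa_C)\alpha(\eta_D)=0$, so that the box product $\kappa_C\boxtimes\eta_D$ \emph{does} descend to a line bundle on $S$ even though neither factor descends to $C/G$ or $D/G$. The resulting line bundles are not of the pullback-times-character form, and the cohomology computation is carried out via Beauville's description of invariant theta characteristics rather than via the pushforward decomposition you wrote down. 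So your reduction to the combinatorics of the $a_\psi,b_\psi$ is exactly the step that is known to fail, and the missing idea is the $\alpha$-sheaf/Schur-multiplier matching that produces genuinely new line bundles on $S$.
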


The idea of the constructions are as follows. Let $S=(C \times D)/G$
be a surface isogenous to a higher product of unmixed type with
$p_g=q=0$. In the previous paper \cite{Lee}, we proved that one
cannot construct any exceptional sequence of maximal length
consisting of line bundles on $S$ using $G$-equivariant line bundles
on $C$ and $D$ when $G=(\mathbb{Z}/2)^3$ or $G=(\mathbb{Z}/2)^4$. In
this paper we use $G$-invariant line bundles instead of $G$-equivariant line
bundles on $C$ and $D$ to construct exceptional sequences of maximal
length on $S$. To be more precise, we show that there are $G$-invariant theta
characteristics and $G$-invariant torsion line bundles on $C$ and
$D$ whose box products become $G$-equivariant line bundles on $C
\times D$. To do this we explicitly compute the Schur multipliers
of these invariant line bundles in the cocyle level. We show that one can
produce exceptional sequences of maximal length on $S$ by this way.
We expect that one can construct exceptional sequences of maximal
lengths on surfaces isogenous to a higher product with nonabelian
quotient groups in a similar way.

We also compute the Hochschild cohomologies of the quasiphantom
categories and prove that for some exceptional sequences we made the
DG algebras of endomorphisms are deformation invariant.

\begin{acknowledgement}
I am grateful to my advisor Young-Hoon Kiem for his invaluable
advice and many suggestions for the first draft of this paper.
Without his support and encouragement, this work could not have been
accomplished. I thank Fabrizio Catanese, Igor Dolgachev for answering my questions, and helpful conversations. I would like to thank Seoul National University for its support during the preparation of this paper.
\end{acknowledgement}

\begin{notations}
We will work over $\mathbb{C}$. A curve will mean a smooth projective curve. A surface will mean a smooth projective surface. Derived category of a variety will mean the bounded derived category of coherent sheaves on the variety. $G$ denotes a finite group and $\widehat{G}=Hom(G,\mathbb{C}^*)$ denotes the character group of $G$. Here $ \sim $ denotes linear equivalence of divisors.
\end{notations}

\section{Surfaces isogenous to a higher product}

In this section we recall the definition and some basic facts about surfaces isogenous to a higher product. For details, see \cite{BC}.

\begin{defi} A surface $S$ is called isogenous to a higher product if $S=(C \times D)/G$ where $C$, $D$ are curves with genus at least $2$ and $G$ is a finite group acting freely on $C \times D$. When $G$ acts via a product action, $S$ is called of unmixed type.
\end{defi}

\begin{rmk} \cite{BC} Let $S$ be a surface isogenous to a higher product of unmixed type. Then $S$ is a surface of general type. When $p_g=q=0$, one can prove that $K_S^2=8$, $C/G \cong D/G \cong \mathbb{P}^1$ and $|G|=(g_C-1)(g_D-1)$ where $g_C$ and $g_D$ denote the genus of $C$ and $D$, respectively.
\end{rmk}

Bauer and Catanese proved that there are four families of surfaces isogenous to a higher product of unmixed type with $p_g=q=0$, $G$ is abelian. Moreover they computed the dimensions of the families they form in \cite{BC}.

\begin{thm} \cite{BC} Let $S$ be a surface isogenous to a higher product $(C \times D) /G$ of unmixed type with $p_g=q=0$. If $G$ is abelian, then $G$ is one of the following groups : \\
(1) $(\mathbb{Z}/2)^3$, and these surfaces form an irreducible connected component of dimension 5 in their moduli space; \\
(2) $(\mathbb{Z}/2)^4$, and these surfaces form an irreducible connected component of dimension 4 in their moduli space; \\
(3) $(\mathbb{Z}/3)^2$, and these surfaces form an irreducible connected component of dimension 2 in their moduli space; \\
(4) $(\mathbb{Z}/5)^2$, and $S$ is the Beauville surface.
\end{thm}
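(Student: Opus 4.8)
The plan, following \cite{BC}, is to recast the geometric data as combinatorial data on $G$ via the Riemann existence theorem, then to use Riemann--Hurwitz together with the normalization $\chi(\mathcal{O}_S)=1$ to bound $|G|$, and finally to eliminate all but four groups using the freeness of the action. Since $C/G\cong D/G\cong\mathbb{P}^1$ by the Remark, the Galois cover $C\to\mathbb{P}^1$ is determined by a \emph{spherical system of generators} of $G$, i.e.\ a tuple $(g_1,\dots,g_r)$ of nontrivial elements with $g_1\cdots g_r=1$ that generate $G$, the order $\mathrm{ord}(g_i)$ being the ramification index over the $i$-th branch point; likewise $D\to\mathbb{P}^1$ yields $(h_1,\dots,h_s)$. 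Because $G$ is abelian the point stabilizers for the $C$-action are exactly the cyclic subgroups $\langle g_i\rangle$, and the unmixed product action on $C\times D$ is free precisely when these are disjoint from the ones coming from $D$, namely
\[ \Big(\bigcup_i \langle g_i\rangle\Big)\cap\Big(\bigcup_j \langle h_j\rangle\Big)=\{1\}. \]

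Riemann--Hurwitz for the two covers reads
\[ \frac{2(g_C-1)}{|G|}=-2+\sum_{i=1}^r\Big(1-\tfrac{1}{\mathrm{ord}(g_i)}\Big)=:a,\qquad \frac{2(g_D-1)}{|G|}=-2+\sum_{j=1}^s\Big(1-\tfrac{1}{\mathrm{ord}(h_j)}\Big)=:b, \]
and the condition $p_g=q=0$, equivalently $|G|=(g_C-1)(g_D-1)$ from the Remark, becomes $ab=4/|G|$. Since $g_C,g_D\ge2$ we have $a,b>0$, while $(g_C-1)(g_D-1)=|G|$ with $g_C-1,g_D-1\ge1$ forces $a,b\le2$; in particular $2<\sum_i(1-1/\mathrm{ord}(g_i))\le4$, which bounds $r$. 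The abelian hypothesis now constrains the signatures sharply: a relation $g_1\cdots g_r=1$ in an abelian group forces divisibility conditions among the orders (for instance no abelian group admits a $(2,3,7)$ triangle, since a product of two commuting elements of orders $2$ and $3$ has order $6\neq7$), and a short case analysis shows every hyperbolic abelian signature has excess $a=\sum(1-1/\mathrm{ord}(g_i))-2\ge\tfrac16$, the minimum being attained by $(2,6,6)$. Hence $|G|=4/(ab)\le144$, reducing the classification to a finite list of groups.

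For each abelian $G$ on this finite list I would ask whether it carries two spherical generating systems, with signatures in the allowed range and with $ab=4/|G|$, whose cyclic subgroups meet only in $1$. The disjointness condition is the decisive constraint and the main obstacle: it requires the ramification supports of $C$ and of $D$ to lie in essentially complementary families of cyclic subgroups while each family still generates all of $G$. This immediately kills cyclic groups (a single generator already sweeps out the whole group), and, after bookkeeping the product-one relation against the available cyclic subgroups, it eliminates $(\mathbb{Z}/n)^2$ for $n\neq3,5$, groups carrying elements of too many distinct orders, and groups of large rank, leaving exactly $(\mathbb{Z}/2)^3,(\mathbb{Z}/2)^4,(\mathbb{Z}/3)^2,(\mathbb{Z}/5)^2$. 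Feeding these back into Riemann--Hurwitz forces the signatures: five and six involutions for $(\mathbb{Z}/2)^3$, five and five for $(\mathbb{Z}/2)^4$, four order-$3$ elements on each side for $(\mathbb{Z}/3)^2$, and three order-$5$ elements on each side for $(\mathbb{Z}/5)^2$, so that the branch-point numbers $(r,s)$ are $(5,6),(5,5),(4,4),(3,3)$ respectively. Proving that \emph{no other} abelian group survives, rather than merely exhibiting these four, is where the care is needed.

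It remains to identify the moduli. For fixed $G$ and signatures the surfaces $S$ are parametrized by a Hurwitz-type space whose only continuous invariants are the positions of the branch points on the two copies of $\mathbb{P}^1$, the generating systems themselves being discrete data (taken up to the finite actions of $\mathrm{Aut}(G)$ and of $\widehat{G}$). Since a configuration of $r$ points on $\mathbb{P}^1$ modulo $\mathrm{PGL}_2(\mathbb{C})$ has dimension $r-3$, and the two factors contribute independently, the dimension is $(r-3)+(s-3)=r+s-6$, giving $5,4,2,0$ in the four cases, the last being the rigid Beauville surface. Irreducibility of each family follows from the transitivity of the braid-group (Hurwitz-move) action on the admissible generating systems, which one checks directly in these small cases; and that each family is a connected component of the moduli space is Catanese's theorem that being isogenous to a product is an open and closed condition, preserved under deformation. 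I would conclude by recording the four signatures explicitly, since they are precisely the input for the constructions in the sequel.
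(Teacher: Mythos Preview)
The paper does not prove this theorem at all: it is quoted verbatim from Bauer--Catanese \cite{BC} and stated without argument, so there is no ``paper's own proof'' to compare your proposal against. Your sketch is in fact a faithful outline of the original Bauer--Catanese argument---reduction via Riemann existence to spherical generating systems, the Riemann--Hurwitz identity $ab=4/|G|$, a finiteness bound on $|G|$, elimination via the disjointness of stabilizer sets, and the dimension count $(r-3)+(s-3)$---so methodologically you are on the right track and there is no genuine gap in strategy.

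That said, as written your proposal is a sketch rather than a proof: the phrases ``a short case analysis shows'', ``after bookkeeping \ldots\ it eliminates'', and ``which one checks directly'' are precisely the places where the actual work in \cite{BC} lies. In particular, your claimed minimum $a\ge\tfrac16$ attained at signature $(2,6,6)$ needs justification (one must verify that this signature is realized by an abelian group satisfying the product-one relation, and that no smaller positive excess occurs among abelian signatures), and the elimination of all abelian groups other than the four survivors is a nontrivial enumeration. If you intend this as a citation with an explanatory gloss, it is fine; if you intend it as a self-contained proof, those steps must be written out.
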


Recently Shabalin \cite{Sh} and Bauer, Catanese and Frapporti \cite{BC2}, \cite{BCF} have computed the first homology groups of these surfaces.

\begin{thm} \cite{BC2}, \cite{BCF}, \cite{Sh} Let $S$ be a surface isogenous to a higher product $(C \times D)/G$ of unmixed type with $p_g=q=0$, $G$ is an abelian group. Then we have the following isomorphisms : \\
(1) $H_1(S,\mathbb{Z}) \cong (\mathbb{Z}/2)^4 \oplus (\mathbb{Z}/4)^2$ for $G=(\mathbb{Z}/2)^3$; \\
(2) $H_1(S,\mathbb{Z}) \cong (\mathbb{Z}/4)^4$ for $G=(\mathbb{Z}/2)^4$; \\
(3) $H_1(S,\mathbb{Z}) \cong (\mathbb{Z}/3)^5$ for $G=(\mathbb{Z}/3)^2$; \\
(4) $H_1(S,\mathbb{Z}) \cong (\mathbb{Z}/5)^3$ for $G=(\mathbb{Z}/5)^2$.
\end{thm}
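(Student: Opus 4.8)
The plan is to compute $H_1(S,\mathbb{Z})$ as the abelianization of $\pi_1(S)$, exploiting the freeness of the $G$-action together with the orbifold structure of the two quotients $C \to C/G \cong \mathbb{P}^1$ and $D \to D/G \cong \mathbb{P}^1$. Since $G$ acts freely on $C\times D$, the projection $C\times D \to S$ is an unramified Galois covering, so $\pi_1(S)$ is an extension
$$1 \to \pi_1(C)\times\pi_1(D) \to \pi_1(S) \to G \to 1.$$
More precisely, writing $\mathbb{T}_C = \pi_1^{\mathrm{orb}}(C/G)$ and $\mathbb{T}_D = \pi_1^{\mathrm{orb}}(D/G)$ for the orbifold (polygonal) groups of the two $\mathbb{P}^1$'s with their branching structure, the unmixed (diagonal) product action identifies $\pi_1(S)$ with the fiber product $\mathbb{H} = \{(x,y)\in \mathbb{T}_C\times\mathbb{T}_D : \bar x = \bar y \in G\}$ of the two quotient maps over $G$. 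This is the description established by Bauer and Catanese in \cite{BC}, and I would take it as the starting point.

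First I would record explicit presentations. Each orbifold group is a genus-$0$ polygonal group $\langle c_1,\dots,c_r \mid c_i^{m_i}=1,\ c_1\cdots c_r = 1\rangle$, whose signature $(m_1,\dots,m_r)$ and surjection onto $G$ (the generating vector) are exactly the combinatorial data tabulated by Bauer and Catanese for each of the four abelian groups. Because each family is an \emph{irreducible, connected} component of its moduli space (\cite{BC}), $H_1(S,\mathbb{Z})$ is a deformation invariant and is constant along the entire family; hence it suffices to carry out the computation for one explicit choice of signatures and generating vectors in each of the four cases.

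Next I would produce a finite presentation of $\mathbb{H}$, for instance by Reidemeister--Schreier using a Schreier transversal for $G$, and then abelianize, which reduces to putting the integer relation matrix into Smith normal form and reading off the elementary divisors. Conceptually the output is organized by the five-term exact sequence attached to $1 \to \pi_1(C)\times\pi_1(D) \to \mathbb{H} \to G \to 1$,
$$H_2(G) \xrightarrow{\ \tau\ } (H_1(C,\mathbb{Z})\oplus H_1(D,\mathbb{Z}))_G \to H_1(S,\mathbb{Z}) \to G \to 0,$$
which exhibits $H_1(S,\mathbb{Z})$ as an extension of $G$ by the cokernel of the transgression $\tau$ acting on the $G$-coinvariants of $H_1(C)\oplus H_1(D)$ (these coinvariants are finite, since $H_1(C,\mathbb{Q})^G = H_1(\mathbb{P}^1,\mathbb{Q}) = 0$). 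I would use this sequence as an independent cross-check on the Smith-normal-form calculation.

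The hard part will be the appearance of $4$-torsion in cases (1) and (2), where $G$ — and both abelianizations $\mathbb{T}_C^{\mathrm{ab}}$, $\mathbb{T}_D^{\mathrm{ab}}$ — have exponent $2$. A $\mathbb{Z}/4$ summand cannot originate from $G$ or from either factor separately; it arises because the extension $0 \to \mathrm{coker}(\tau) \to H_1(S,\mathbb{Z}) \to G \to 0$ is non-split in the relevant direction, so that a lift $\tilde g$ of an order-$2$ element $g\in G$ satisfies $\tilde g^2 \neq 1$ with $\tilde g^2$ a nonzero class of order $2$ in $\mathrm{coker}(\tau)$, forcing $\tilde g$ to have order $4$. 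Pinning this down amounts to computing the transgression $\tau$ and the coinvariants honestly — keeping all the mixed relations produced by the diagonal gluing in $\mathbb{H}$ rather than discarding the interaction between the two factors — and it is precisely this bookkeeping that distinguishes the four answers. Once the relation matrix is assembled correctly the remainder is a mechanical Smith-normal-form computation, and matching the resulting order (for example $|H_1(S,\mathbb{Z})| = 2^4\cdot 4^2$ in case (1)) against \cite{BC2}, \cite{BCF}, \cite{Sh} provides a final consistency check.
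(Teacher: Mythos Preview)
The paper does not give its own proof of this theorem; it is simply quoted, with attribution, from \cite{BC2}, \cite{BCF}, \cite{Sh}. There is therefore nothing in the paper to compare your argument against.

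That said, your outline is precisely the method those references use: the identification of $\pi_1(S)$ with the fiber product $\mathbb{T}_C \times_G \mathbb{T}_D$ of the two orbifold groups over $G$ is exactly the description from \cite{BC}, and the subsequent abelianization is what \cite{Sh} carries out by hand and \cite{BCF} carries out by computer (MAGMA) for large tables of such surfaces. Your remark that the $\mathbb{Z}/4$ summands in cases (1) and (2) must come from a non-split extension, since every ingredient group has exponent $2$, is the right diagnosis of where the subtlety lies.

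The only caveat is that what you have written is a strategy, not a proof: you have not actually produced the relation matrix for any of the four cases, nor computed its Smith normal form, nor identified the transgression $\tau$. The honest Reidemeister--Schreier step for, say, $G=(\mathbb{Z}/2)^4$ involves a transversal of size $16$ and a substantial number of generators and relations, and the correctness of the final answer hinges entirely on that bookkeeping. Until that computation is exhibited (or delegated to a computer algebra system, as in \cite{BCF}), the argument is incomplete.
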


\begin{rmk}
Let $S$ be a surface with $p_g=q=0$ and $K_S^2=8$. From the exponential sequence $$ 0 \to \mathbb{Z} \to \mathcal{O} \to \mathcal{O}^* \to 0 $$ we get $$ Pic(S) \cong H^2(S,\mathbb{Z}). $$

Because $q=0$ we get $b_1=0$. Noether's formula $$\chi(\mathcal{O}_X) = 1 =
\frac{1}{12}(8+2b_0-2b_1+b_2)=\frac{1}{12}(K_S^2 + \chi_{top}(S))$$
implies that these surfaces have $b_2=2$.
\end{rmk}

Now we compute the Grothendieck groups of these surfaces. 

\begin{lem} \cite[Lemma 2.7]{GS}, \cite[Lemma 2.6]{Lee} Let $S$ be a surface with $p_g=q=0$ isogenous to a higher product $(C \times D)/G$ of unmixed type and let $G$ be abelian. Then
$$ K(S) \cong \mathbb{Z}^2 \oplus Pic(S). $$
\end{lem}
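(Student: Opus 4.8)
The plan is to compute the Grothendieck group $K(S)$ via the topological filtration by codimension of support, $K(S) = F^0 \supseteq F^1 \supseteq F^2 \supseteq 0$, and to identify the associated graded pieces with (pieces of) the Chow groups of $S$. For a smooth projective surface the general theory gives $F^0/F^1 \cong \mathbb{Z}$ (generated by the class of a point's complement, i.e. by $[\mathcal{O}_S]$ modulo lower terms), $F^1/F^2 \cong \mathrm{Pic}(S) = A^1(S)$, and $F^2 \cong A^2(S) = \mathrm{CH}_0(S)$, the group of zero-cycles modulo rational equivalence. So the first step is to recall these standard isomorphisms and reduce the lemma to two claims: (i) the filtration splits, and (ii) $\mathrm{CH}_0(S) \cong \mathbb{Z}$.

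For claim (ii) I would invoke the theorem of Bloch--Kas--Lieberman (a consequence of Bloch's conjecture, which is known for surfaces of general type with $p_g = 0$ that are quotients of products of curves, or more directly by Kimura/Rojtman-type arguments): since $S = (C \times D)/G$ has $p_g = q = 0$, the albanese is trivial and the Bloch conjecture holds here, so $\mathrm{CH}_0(S) \cong \mathbb{Z}$ via the degree map. Concretely, $\mathrm{CH}_0(C \times D)$ is controlled by $\mathrm{CH}_0(C) \oplus \mathrm{CH}_0(D)$ plus a term involving $H^1 \otimes H^1$, and passing to the $G$-invariant part (which is rationally a direct summand since $G$ is finite and we work over $\mathbb{C}$) kills the transcendental contribution because $q(S) = p_g(S) = 0$. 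This gives $F^2 \cong \mathbb{Z}$. For claim (i), the extension $0 \to F^1 \to F^0 \to \mathbb{Z} \to 0$ splits because $[\mathcal{O}_S]$ maps to a generator of $F^0/F^1$ and $\mathbb{Z}$ is free; the extension $0 \to F^2 \to F^1 \to \mathrm{Pic}(S) \to 0$ need not split a priori, but one can split it by choosing, for each generator of $\mathrm{Pic}(S) \cong H^2(S,\mathbb{Z})$ (which has $b_2 = 2$ free part plus torsion $H_1(S,\mathbb{Z})$ by the remark above), an actual line bundle and mapping its class back; the ambiguity lies in $F^2 \cong \mathbb{Z}$, and since any two lifts of the same line bundle class differ by a zero-cycle of degree $0$ which is rationally trivial by (ii), the lift is in fact canonical. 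Hence $F^1 \cong F^2 \oplus \mathrm{Pic}(S) \cong \mathbb{Z} \oplus \mathrm{Pic}(S)$, and combining, $K(S) \cong \mathbb{Z} \oplus \mathbb{Z} \oplus \mathrm{Pic}(S) = \mathbb{Z}^2 \oplus \mathrm{Pic}(S)$.

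The main obstacle is claim (ii): the identification $\mathrm{CH}_0(S) \cong \mathbb{Z}$ is exactly a case of Bloch's conjecture, and although it is known for these surfaces (they are quotients of products of curves, so one can appeal to the work on Bloch's conjecture for such quotients, or cite that surfaces isogenous to a product with $p_g=0$ satisfy it), this is the one input that is not elementary. Everything else — the filtration, its graded pieces, and the splitting — is formal once (ii) is in hand. I would therefore organize the write-up so that the splitting argument is given in full and the vanishing of the transcendental part of $\mathrm{CH}_0$ is cited from the literature (as is done in \cite{GS} and \cite{Lee}); alternatively, since the statement is quoted from \cite[Lemma 2.7]{GS} and \cite[Lemma 2.6]{Lee}, it would be legitimate simply to refer to those proofs and remark that the argument only uses $p_g=q=0$, $K_S^2=8$, $b_2=2$, and Bloch's conjecture for $S$.
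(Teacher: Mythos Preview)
Your overall strategy---codimension filtration on $K(S)$, identification of the graded pieces with $\mathbb{Z}$, $\mathrm{Pic}(S)$, $\mathrm{CH}_0(S)$, and then Bloch's conjecture (known for quotients of products of curves, e.g.\ via Kimura finite-dimensionality as in \cite{Ki}) to get $\mathrm{CH}_0(S)\cong\mathbb{Z}$---is the standard one and is what the cited references \cite{GS}, \cite{Lee} do; the present paper gives no independent proof.

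There is, however, a genuine gap in your splitting argument for $0\to F^2\to F^1\to \mathrm{Pic}(S)\to 0$. The assignment $L\mapsto [L]-[\mathcal{O}_S]$ is a well-defined \emph{set-theoretic} section (isomorphic line bundles give equal $K$-classes, so there is no ``ambiguity'' to resolve), but it is \emph{not} a group homomorphism: for line bundles $L,L'$ one has
\[
\bigl([L]-1\bigr)+\bigl([L']-1\bigr)-\bigl([L\otimes L']-1\bigr)=-\bigl([L]-1\bigr)\bigl([L']-1\bigr)\in F^2,
\]
and under the identification $F^2\cong\mathrm{CH}_0(S)\cong\mathbb{Z}$ this element has degree $c_1(L)\cdot c_1(L')$, which is typically nonzero on $S$. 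So your sentence ``any two lifts of the same line bundle class differ by a zero-cycle of degree $0$'' does not address the actual obstruction, and the conclusion that the section is ``canonical'' (as a homomorphism) is unjustified.

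The fix is easy and is what the references use: instead of trying to split the filtration by hand, write down the map
\[
(\mathrm{rk},\det,\chi)\colon K(S)\longrightarrow \mathbb{Z}\oplus \mathrm{Pic}(S)\oplus\mathbb{Z},
\]
which is a group homomorphism (determinant is multiplicative in short exact sequences). Surjectivity is immediate from $[\mathcal{O}_S]$, $[\mathcal{O}_p]$, and $[L]-[\mathcal{O}_S]$; injectivity follows because an element with $\mathrm{rk}=0$ lies in $F^1$, with $\det=\mathcal{O}$ it lies in $F^2\cong\mathbb{Z}$, and on $F^2$ the Euler characteristic agrees with the degree. This gives $K(S)\cong\mathbb{Z}^2\oplus\mathrm{Pic}(S)$ directly and makes the role of $\mathrm{CH}_0(S)\cong\mathbb{Z}$ transparent.
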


\section{Derived categories of surfaces isogenous to a higher product of unmixed type with $p_g=q=0$, $G=(\mathbb{Z}/3)^2$ or $G=(\mathbb{Z}/5)^2$}

We recall some basic notions to describe derived categories of algebraic varieties.

\begin{defi}
(1) An object $E$ in a triangulated category $D$ is called exceptional if
\begin{displaymath}
Hom(E,E[i])=\left \{ {\begin{array}{ll} \mathbb{C} & \textrm{if $i=0$,} \\ 0 & \textrm{otherwise.} \end{array}}
\right.
\end{displaymath}
(2) A sequence $E_1, \cdots, E_n$ of exceptional objects is called
an exceptional sequence if $$Hom(E_i,E_j[k])=0, \forall i > j,
\forall k.$$
\end{defi}

When $S$ is a surface with $p_g=q=0$, every line bundle on $S$ is an
exceptional object in $D^b(S)$. Now we define the notion of
semi-orthogonal decomposition.

\begin{defi}\cite{Huy}
Let $D$ be a triangulated category. \\
(1) A full triangulated subcategory $D' \subset D$ is called admissible if the inclusion has a right adjoint. \\
(2) A sequence of full admissible triangulated subcategories $D_1, \cdots, D_n \subset D$ is semi-orthogonal if for all $i > j$ $$ D_j \subset D_i^{\perp}. $$
(3) Such a sequence is called a semi-orthogonal decomposition if $D_1, \cdots, D_n$ generate $D$.
\end{defi}

Next we define the notion of quasiphantom category.

\begin{defi}\cite[Definition 1.8]{GO}
Let $X$ be a smooth projective variety. Let $\mathcal{A}$ be an admissible triangulated subcategory of $D^b(X)$. Then $\mathcal{A}$ is called a quasiphantom category if the Hochchild homology of $\mathcal{A}$ vanishes, and the Grothendieck group of $\mathcal{A}$ is finite. If the Grothendieck group of $\mathcal{A}$ also vanishes, then $\mathcal{A}$ is called a phantom category.
\end{defi}

Now we discuss about previous construction of exceptional sequences of maximal length on the surfaces isogenous to a higher product of unmixed type with $p_g=q=0$, $G=(\mathbb{Z}/3)^2$ or $G=(\mathbb{Z}/5)^2$. When $G=(\mathbb{Z}/5)^2$, the surface isogenous to a higher product is called the Beauville surface. Galkin and Shinder constructed exceptional sequences of maximal length on the Beauville surface in \cite{GS}. Motivated by their work, we constructed exceptional sequences of maximal length on the surfaces isogenous to a higher product when $G=(\mathbb{Z}/3)^2$ in \cite{Lee}. Recently Coughlan has constructed exceptional sequences of maximal length on these surfaces via different approach in \cite{C}.

\begin{thm}\cite{C}, \cite{GS}, \cite{Lee} Let $S=(C \times D)/G$ be a surface isogenous to a higher product of unmixed type with $p_g=q=0$, $G=(\mathbb{Z}/3)^2$ or $G=(\mathbb{Z}/5)^2$. There are exceptional sequences of line bundles of maximal length on $S$. The orthogonal complements of the admissible subcategories generated by these exceptional sequences in the derived category of $S$ are quasiphantom categories.
\end{thm}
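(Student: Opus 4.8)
Since $p_g=q=0$, every line bundle on $S$ is exceptional, and an exceptional sequence remains one after a fixed twist; so it is enough to find $L_2,L_3,L_4\in Pic(S)$ for which $\mathcal{O}_S,L_2,L_3,L_4$ is an exceptional sequence, i.e. for which each of the six difference bundles $L_2^{-1},L_3^{-1},L_4^{-1},L_2\otimes L_3^{-1},L_2\otimes L_4^{-1},L_3\otimes L_4^{-1}$ is acyclic. By Lemma 1.6 and Remark 1.5 the free rank of $K(S)$ is $2+b_2=4$, which bounds the length of any exceptional sequence; hence a sequence of length $4$ is of maximal length.

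I would work on the étale Galois cover $\pi\colon C\times D\to S$ with group $G=(\mathbb{Z}/3)^2$ or $(\mathbb{Z}/5)^2$: line bundles on $S$ are precisely $G$-equivariant line bundles on $C\times D$, and $H^*(S,L)=H^*(C\times D,\pi^*L)^G$. I would look for the $L_i$ as descents of box products $A\boxtimes B$ with $A\in Pic(C)$, $B\in Pic(D)$; by the Künneth formula such a bundle is acyclic on $C\times D$, hence its descent is acyclic on $S$, as soon as $A$ is acyclic on $C$ or $B$ is acyclic on $D$, i.e. one of the two has degree equal to the genus of its curve minus one and does not lie on the corresponding theta divisor. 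The relevant building blocks are twists of a theta characteristic by torsion line bundles coming from the ramification of $C\to\mathbb{P}^1$ and $D\to\mathbb{P}^1$.

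For the descent one needs a $G$-equivariant structure. A $G$-invariant line bundle $A$ on $C$ carries a Schur-multiplier obstruction $o(A)\in H^2(G,\mathbb{C}^*)$, the connecting map of $0\to\widehat{G}\to Pic_G(C)\to Pic(C)^G\xrightarrow{o}H^2(G,\mathbb{C}^*)$, and $A\boxtimes B$ descends iff $o(A)+o(B)=0$. In the cases at hand $|G|$ is odd, and this makes all the relevant obstructions vanish automatically: $\mathcal{O}_C(E)$ is $G$-equivariant whenever $E$ is a $G$-invariant divisor (hence in particular for differences of ramification fibres), while $H^i(G,JC[2])=0$ for $i\ge 1$ yields a $G$-invariant theta characteristic $\theta_C$, which is then $G$-equivariant because $2\,o(\theta_C)=o(\theta_C^{\otimes 2})=o(K_C)=0$ and $2$ is invertible on the finite group $H^2(G,\mathbb{C}^*)$; likewise on $D$. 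Thus every box product of these bundles descends to $S$ with no further condition, and the whole problem reduces to explicit cohomology vanishing. Here one uses the concrete models — the Fermat quintic with its $(\mathbb{Z}/5)^2$-action for the Beauville surface, and the genus-$4$ curve with its $(\mathbb{Z}/3)^2$-action of \cite{Lee} — on which theta characteristics, the ramification torsion bundles, and the numbers $h^0$ of the candidate degree-$(g-1)$ bundles can be computed using the group symmetry. I expect this last step — choosing $L_2,L_3,L_4$ so that all six difference bundles are simultaneously non-effective on the relevant factor — to be the main obstacle; it is exactly where the good behaviour of the odd-order Schur multiplier is used, in contrast with the $(\mathbb{Z}/2)^3$ and $(\mathbb{Z}/2)^4$ cases treated later in this paper.

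Finally, $L_1,\dots,L_4$ generates an admissible subcategory $\mathcal{L}\subset D^b(S)$; put $\mathcal{A}=\mathcal{L}^{\perp}$, so $D^b(S)=\langle\mathcal{A},\mathcal{L}\rangle$. Hochschild homology and $K$-theory are additive along this semiorthogonal decomposition. Since $p_g=q=0$ and $b_2=2$, the Hodge diamond of $S$ gives $HH_*(S)$ concentrated in degree $0$ with $\dim HH_0(S)=h^{0,0}+h^{1,1}+h^{2,2}=1+2+1=4$; each $L_i$ contributes one dimension in degree $0$, so $HH_*(\mathcal{A})=0$. Likewise $K(S)\cong\mathbb{Z}^4\oplus K(\mathcal{A})$, while by Lemma 1.6 and Remark 1.5 one has $K(S)\cong\mathbb{Z}^2\oplus Pic(S)\cong\mathbb{Z}^4\oplus H_1(S,\mathbb{Z})$ (universal coefficients, $b_1=0$), so $K(\mathcal{A})\cong H_1(S,\mathbb{Z})$, which is finite and nonzero by Theorem 1.4. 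Hence $\mathcal{A}$ is a quasiphantom category.
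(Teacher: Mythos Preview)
The paper does not give its own proof of this theorem: it is stated with attribution to \cite{C}, \cite{GS}, \cite{Lee}, and only the context is recalled in the surrounding text. So the only comparison possible is between your sketch and the strategy of those references as summarized here.

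Your outline is faithful to that strategy. The reduction to acyclicity of box products on $C\times D$ via $D^b(S)\simeq D^b_G(C\times D)$ and K\"unneth, the identification of the maximal length as $4$ from $\operatorname{rk}K(S)=2+b_2$, and the quasiphantom argument via additivity of Hochschild homology and of $K$-theory along the semiorthogonal decomposition are all correct and are exactly how the cited papers proceed (the quasiphantom step here and in Propositions~5.9 and~6.6 is simply attributed to Kuznetsov's theorem; your argument spells it out). Your observation that for $|G|$ odd the Schur multiplier obstructions for $G$-invariant theta characteristics and for $\mathcal{O}_C(E)$ with $E$ a $G$-invariant divisor vanish individually is the right conceptual reason why \cite{GS} and \cite{Lee} can work with $G$-equivariant line bundles on each factor, in contrast to the present paper where, as stated in the Introduction, this fails for $G=(\mathbb{Z}/2)^3,(\mathbb{Z}/2)^4$ and one is forced to pair $G$-invariant bundles whose obstruction classes cancel only after taking the box product.

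What your proposal does not supply is the actual content of \cite{GS} and \cite{Lee}: the explicit choice of $L_2,L_3,L_4$ and the verification that all six difference bundles are acyclic. You correctly flag this as ``the main obstacle''; it is resolved in those references by concrete calculations on the Fermat quintic and on the explicit genus-$4$ curve, not by a general principle. As a self-contained proof your sketch is therefore incomplete, but as a summary of the method behind the cited theorem it is accurate.
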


In this paper we will show that similar statements are true for surfaces isogenous to a higher product of unmixed type with $p_g=q=0$, $G=(\mathbb{Z}/2)^3$ or $G=(\mathbb{Z}/2)^4$.

\section{Theta characteristics}

In this section we collect some facts about $\mathbb{Z}/2$-invariant line bundles on curves studied by Beauville in {\cite{Be2}}. Let $C$ be a curve with involution $\sigma$, $B$ be the quotient curve $C/\sigma$, $\pi : C \to B$ be the quotient map, and $R \subset C$ be the set of ramification points. The double covering corresponds to a line bundle $\rho$ on $B$ such that $\rho^2=\mathcal{O}_B(\pi_*R)$, see \cite{BHPV}, \cite{Be1}, \cite{Be2}.

Beauville classifies all $\sigma$-invariant line bundles on $C$ in \cite{Be2}.

\begin{lem}{\cite[Lemma 1]{Be2}}
Consider the map $\phi : \mathbb{Z}^R \to Pic(C)$ which maps $r \in R$ to $\mathcal{O}_C(r)$. Its image lies in the subgroup $Pic(C)^\sigma$ of $\sigma$-invariant line bundles. When $R \neq \emptyset$, $\phi$ induces a short exact sequence $$ 0 \to \mathbb{Z}/2 \to (\mathbb{Z}/2)^R \to Pic(C)^{\sigma}/\pi^*Pic(B) \to 0, $$
and the kernel is generated by $(1,\cdots,1).$
\end{lem}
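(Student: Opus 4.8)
\emph{Proof proposal.} This is Beauville's lemma; I would reconstruct the argument as follows. First record the two formal points. Since $\sigma$ fixes every ramification point $r\in R$, we have $\sigma^*\mathcal{O}_C(r)\cong\mathcal{O}_C(r)$, so the image of $\phi$ lies in $Pic(C)^\sigma$; and since $\pi$ is totally ramified at each $r\in R$, we have $\mathcal{O}_C(2r)=\mathcal{O}_C(\pi^*\pi(r))=\pi^*\mathcal{O}_B(\pi(r))\in\pi^*Pic(B)$, so composing $\phi$ with the quotient $Pic(C)^\sigma\to Pic(C)^\sigma/\pi^*Pic(B)$ annihilates $2\mathbb{Z}^R$ and hence factors through a homomorphism $\bar\phi\colon(\mathbb{Z}/2)^R\to Pic(C)^\sigma/\pi^*Pic(B)$. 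The whole content is then that $\bar\phi$ is surjective with kernel $\langle(1,\dots,1)\rangle$.

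The engine of the proof is the relation between $\sigma$-invariance, $\mathbb{Z}/2$-linearizations and descent along $\pi$, which I would summarize in three facts. (i) Every $L\in Pic(C)^\sigma$ admits a $\sigma$-linearization, and exactly two, differing by the sign character of $\mathbb{Z}/2$: for a chosen isomorphism $\psi\colon\sigma^*L\to L$ the automorphism $\psi\circ\sigma^*\psi$ of $L$ is multiplication by some $c\in\mathbb{C}^*$, which is a square (equivalently $H^2(\mathbb{Z}/2,\mathbb{C}^*)=0$), so after rescaling $\psi$ we may take $c=1$; and two linearizations differ by a global unit on $C$, i.e.\ a nonzero constant, whose square must be $1$. (ii) For a $\sigma$-linearized line bundle the stabilizer $\langle\sigma\rangle$ acts on the fibre over each $r\in R$ by $\pm1$, and by descent for the double cover $\pi$ the bundle is of the form $\pi^*(\,\cdot\,)$ exactly when all these signs equal $+1$. (iii) Working in an \'etale-local model at a point $r\in R$, where $\pi$ is $t\mapsto t^2$ and $\sigma$ is $t\mapsto -t$, the canonical linearization of $\mathcal{O}_C(r)$ acts by $-1$ on the fibre at $r$ (as $\mathcal{O}_C(r)|_r\cong T_rC$, on which $d\sigma$ is $-1$) and by $+1$ on the fibre at every other point of $R$, where $\mathcal{O}_C(r)$ is canonically trivial.

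Granting (i)--(iii), surjectivity is quick: given $L\in Pic(C)^\sigma$, fix a linearization, let $S\subseteq R$ be the set of ramification points where $\sigma$ acts by $-1$ on the fibre of $L$, and form $L\otimes\mathcal{O}_C(-\sum_{r\in S}r)$ with the product linearization; by (iii) all its fibre-signs over $R$ are $+1$, so by (ii) it equals $\pi^*M$, whence $[L]=[\mathcal{O}_C(\sum_{r\in S}r)]=\bar\phi\big(\sum_{r\in S}e_r\big)$. For the kernel, the canonical linearization of $\mathcal{O}_C(R)$ has fibre-sign $-1$ at every $r\in R$, so twisting by the sign character gives all $+1$; hence $\mathcal{O}_C(R)\in\pi^*Pic(B)$ and $(1,\dots,1)\in\ker\bar\phi$. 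Conversely, if $\mathcal{O}_C(\sum_{r\in S}r)\cong\pi^*N$, then the canonical linearization of the left side — fibre-signs $-1$ on $S$ and $+1$ off $S$ — is carried by such an isomorphism onto one of the two linearizations of $\pi^*N$, whose fibre-signs are either all $+1$ or all $-1$; since an isomorphism of $\sigma$-invariant line bundles changes a fibre-sign vector only by an overall sign, the vector $(\varepsilon_r)_{r\in R}$ with $\varepsilon_r=-1$ precisely for $r\in S$ must be constant, i.e.\ $S=\varnothing$ or $S=R$. Thus $\ker\bar\phi=\{0,(1,\dots,1)\}\cong\mathbb{Z}/2$ (nonzero since $R\neq\varnothing$), and $\bar\phi$ yields the asserted short exact sequence.

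I expect the main obstacle to be making the fibre-sign formalism of the second paragraph precise and coherent: the descent criterion (ii) for the non-free quotient $\pi$, the compatibility of these stabilizer characters with tensor products and duals, and the local computation (iii) pinning down the canonical linearization of $\mathcal{O}_C(r)$. Once these are in place, the surjectivity and kernel computations are essentially bookkeeping.
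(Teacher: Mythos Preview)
The paper does not supply its own proof of this lemma; it merely quotes the statement from \cite{Be2} and uses it as a tool in later sections. There is therefore nothing in the paper to compare against.

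Your reconstruction is correct and is essentially the standard linearization/descent argument. The three ingredients you isolate are the right ones: (i) existence of a $\sigma$-linearization on any $\sigma$-invariant line bundle follows from $H^2(\mathbb{Z}/2,\mathbb{C}^*)=0$; (ii) the Kempf-type descent criterion that a linearized bundle is a pullback along $\pi$ iff the stabilizer acts trivially on fibres over $R$; (iii) the local identification $\mathcal{O}_C(r)|_r\cong T_rC$ giving fibre-sign $-1$ at $r$ and $+1$ elsewhere. One small wording quibble in your kernel computation: an isomorphism of line bundles actually \emph{preserves} the fibre-sign vector exactly (since at a fixed point the isomorphism is a scalar and conjugation by a scalar is trivial); the ``overall sign'' ambiguity enters only because $\pi^*N$ carries two linearizations. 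The conclusion is unaffected. Your honest flag about making (ii) and (iii) precise is well placed, but neither presents real difficulty for a double cover.
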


Beauville also showed how to compute the cohomologies of these invariant line bundles.

\begin{prop}{\cite[Proposition 1]{Be2}}
Let $M$ be a $\sigma$-invariant line bundle on $C$. Then \\
(1) $M \cong \pi^*L(E)$ for some $L \in Pic(B)$ and $E \subset R$. Any pair $(L',E')$ satisfying $M \cong \pi^*L'(E')$ is equal to $(L,E)$ or $(L \otimes \rho^{-1}(\pi_*E),R-E)$. \\
(2) There is a natural isomorphism $H^0(C,M) \cong H^0(B,L) \oplus H^0(B,L \otimes \rho^{-1}(\pi_*E)).$
\end{prop}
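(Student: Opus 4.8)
The plan is to work $\sigma$-equivariantly throughout and reduce each assertion to a local computation at a ramification point, where $\sigma$ sends a local coordinate $t$ to $-t$.

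I would first assemble the equivariant dictionary. Since $H^2(\mathbb{Z}/2,\mathbb{C}^*)=0$, every $\sigma$-invariant line bundle $M$ on $C$ admits a $\sigma$-linearization, and the set of linearizations is a torsor under $\widehat{\mathbb{Z}/2}=\mathbb{Z}/2$, so there are exactly two, differing by the sign character. For a $\sigma$-invariant effective divisor $D$ supported on $R$, the bundle $\mathcal{O}_C(D)$ carries the canonical linearization (for which the tautological section is invariant), and from $\sigma^*(1/t)=-1/t$ one computes that it acts by $-1$ on the fibre over each point of $\mathrm{Supp}(D)$ of odd multiplicity and trivially over all other ramification points; in particular for $E\subseteq R$ it acts by $-1$ over $E$ and trivially over $R-E$. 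The same kind of local computation gives the two identities $\pi^*\rho\cong\mathcal{O}_C(R)$ and $\pi^*\mathcal{O}_B(\pi_*E)\cong\mathcal{O}_C(2E)$, whence also injectivity of $\pi^*:Pic(B)\to Pic(C)$ when $R\neq\emptyset$. Finally I would record the descent criterion in the form needed: if a $\sigma$-linearized line bundle $N$ acts trivially on $N_r$ for every $r\in R$, then $L:=(\pi_*N)^{\sigma}$ is a line bundle on $B$ and $\pi^*L\cong N$; away from the branch locus this is \'etale descent, and at a branch point $b=\pi(r)$, choosing a $\sigma$-invariant local frame $e$ of $N$ one checks directly that $(\pi_*N)^{\sigma}=\mathcal{O}_{B,b}\cdot e$ is free of rank one and pulls back to $N$.

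Granting this, part (1) goes as follows. Given a $\sigma$-invariant $M$, fix a linearization $\mu$ and let $E\subseteq R$ be the set of ramification points on whose fibre $\mu$ acts by $-1$. By the sign computation, $M(-E)=M\otimes\mathcal{O}_C(E)^{\vee}$ with the linearization $\mu$ tensored with the dual of the canonical one on $\mathcal{O}_C(E)$ acts trivially on every fibre over $R$, hence descends to some $L\in Pic(B)$, giving $M\cong\pi^*L(E)$. For the uniqueness statement, note that any isomorphism $M\cong\pi^*L'(E')$ with $E'\subseteq R$ is unique up to a scalar, hence transports the product of the pullback linearization on $\pi^*L'$ with the canonical linearization on $\mathcal{O}_C(E')$ to a well-defined linearization of $M$; the sign computation shows its associated set is $E'$ and the descent recovers $L'$. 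Thus valid pairs correspond bijectively to the two linearizations of $M$, so there are at most two; replacing $\mu$ by $\mu\otimes(\mathrm{sign})$ replaces $E$ by $R-E$, and combining $\pi^*L(E)\cong\pi^*L'(R-E)$ with $\mathcal{O}_C(2E)\cong\pi^*\mathcal{O}_B(\pi_*E)$ and $\mathcal{O}_C(-R)\cong\pi^*\rho^{-1}$ forces $L'\cong L\otimes\rho^{-1}(\pi_*E)$, which is the second pair in the statement.

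For part (2) I would write $H^0(C,M)=H^0(B,\pi_*M)$ and use the projection formula $\pi_*(\pi^*L(E))\cong L\otimes\pi_*\mathcal{O}_C(E)$, reducing everything to $\pi_*\mathcal{O}_C(E)\cong\mathcal{O}_B\oplus\rho^{-1}(\pi_*E)$. Decomposing $\pi_*\mathcal{O}_C(E)$ into $\sigma$-eigensheaves for the canonical linearization on $\mathcal{O}_C(E)$: the invariant part is locally $\mathcal{O}_B$ both away from and at the branch points, while on the anti-invariant part the local generator jumps by $1/t^2=u^{-1}$ across each branch point lying under $E$ relative to the anti-invariant part $\rho^{-1}$ of $\pi_*\mathcal{O}_C$, producing exactly $\rho^{-1}(\pi_*E)$. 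The resulting splitting of $H^0(C,M)$ is then the $\sigma$-eigenspace decomposition, which is the sense in which it is natural. The one place calling for care is the sign bookkeeping at the ramification points — the direction of the twist in the descent step of (1) and the identification of the local generators of the eigensheaves of $\pi_*\mathcal{O}_C(E)$ in (2) — but once the descent criterion and the two local identities $\pi^*\rho\cong\mathcal{O}_C(R)$ and $\pi^*\mathcal{O}_B(\pi_*E)\cong\mathcal{O}_C(2E)$ are in place, the rest is formal.
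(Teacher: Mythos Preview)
The paper does not supply its own proof of this proposition; it is quoted verbatim from Beauville \cite{Be2} and used as a black box. Your argument is correct and is essentially Beauville's: linearize $M$ (two choices, since $H^2(\mathbb{Z}/2,\mathbb{C}^*)=0$), let $E\subseteq R$ be the $(-1)$-eigenlocus of the chosen linearization, twist by $\mathcal{O}_C(-E)$ so that Kempf-type descent produces $L$, identify the two admissible pairs with the two linearizations, and for (2) split $\pi_*\mathcal{O}_C(E)\cong\mathcal{O}_B\oplus\rho^{-1}(\pi_*E)$ into $\sigma$-eigensheaves and apply the projection formula. One expository quibble: injectivity of $\pi^*\colon Pic(B)\to Pic(C)$ for $R\neq\emptyset$ does not follow immediately from the two local identities you list; rather, one uses that for a double cover $\ker\pi^*\subseteq\{\mathcal{O}_B,\rho\}$ together with $\pi^*\rho\cong\mathcal{O}_C(R)\not\cong\mathcal{O}_C$, or alternatively observes that a nowhere-vanishing section of $\pi^*L$ must be $\sigma$-invariant (else it would vanish at ramification points) and hence descends. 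This is standard and does not affect the validity of your proof.
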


When the invariant line bundle is a theta characteristic, the above
proposition becomes as follows.

\begin{prop}{\cite[Proposition 2]{Be2}}
Let $\kappa$ be a $\sigma$-invariant theta characteristic on $C$. Then \\
(1) $\kappa \cong \pi^*L(E)$ for some $L \in Pic(B)$ and $E \subset R$ with $L^2 \cong K_B \otimes \rho(-\pi_*E).$ If another pair $(L',E')$ satisfies $\kappa \cong \pi^*L'(E')$, we have $(L',E')=(L,E)$ or $(L',E')=(K_B \otimes L^{-1},R-E)$. \\
(2) $h^0(\kappa) = h^0(L) + h^1(L)$.
\end{prop}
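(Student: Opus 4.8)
The plan is to deduce this directly from the preceding proposition (Beauville's Proposition 1) by feeding in the defining relation $\kappa^2 \cong K_C$. Two standard facts about the double cover $\pi : C \to B$ will be used, working as before under the assumption $R \neq \emptyset$: first, combining the Hurwitz ramification formula with the identity $\pi^*\rho \cong \mathcal{O}_C(R)$ — the tautological section of $\pi^*\rho$ on $C$ vanishes precisely along $R$ — one gets $K_C \cong \pi^*(K_B \otimes \rho)$; second, since $R \neq \emptyset$ the pullback $\pi^* : Pic(B) \to Pic(C)$ is injective, because an effective divisor linearly equivalent to $0$ must be $0$. I will also use that $\mathcal{O}_C(2E') \cong \pi^*\mathcal{O}_B(\pi_*E')$ for every $E' \subset R$, since $\pi^*\pi_*E' = 2E'$ as divisors on $C$.

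For part (1), apply Proposition 1(1) to $M = \kappa$: this gives $\kappa \cong \pi^*L(E)$ with $L \in Pic(B)$ and $E \subset R$. Squaring and using the last observation, $\pi^*(L^2 \otimes \mathcal{O}_B(\pi_*E)) \cong \kappa^2 \cong K_C \cong \pi^*(K_B \otimes \rho)$, so injectivity of $\pi^*$ forces $L^2 \cong K_B \otimes \rho(-\pi_*E)$, which is the asserted constraint. By Proposition 1(1) the only other pair presenting $\kappa$ is $(L \otimes \rho^{-1}(\pi_*E), R-E)$; rewriting the constraint as $\rho^{-1}(\pi_*E) \cong L^{-2} \otimes K_B$ shows $L \otimes \rho^{-1}(\pi_*E) \cong K_B \otimes L^{-1}$, so this pair is exactly $(K_B \otimes L^{-1}, R-E)$. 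As a consistency check one verifies that $(K_B \otimes L^{-1})^2 \cong K_B \otimes \rho(-\pi_*(R-E))$, using $\rho^2 \cong \mathcal{O}_B(\pi_*R)$.

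For part (2), Proposition 1(2) gives $H^0(C,\kappa) \cong H^0(B,L) \oplus H^0(B, L \otimes \rho^{-1}(\pi_*E))$, and by the identification just obtained the second summand is $H^0(B, K_B \otimes L^{-1})$, which by Serre duality on $B$ has dimension $h^1(B,L)$; hence $h^0(\kappa) = h^0(L) + h^1(L)$.

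All the genuine content is in Proposition 1; the only steps calling for care are the two preliminary identities — $\pi^*\rho \cong \mathcal{O}_C(R)$ (equivalently $K_C \cong \pi^*(K_B \otimes \rho)$) and the injectivity of $\pi^*$ — after which each part is a one-line manipulation of $L^2 \cong K_B \otimes \rho(-\pi_*E)$. I do not expect a real obstacle here; the main thing is the bookkeeping that makes the second pair and the second cohomology summand both land on $K_B \otimes L^{-1}$.
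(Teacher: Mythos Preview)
Your argument is correct and is exactly the standard derivation of Beauville's Proposition~2 from his Proposition~1: square $\kappa \cong \pi^*L(E)$, use $K_C \cong \pi^*(K_B \otimes \rho)$ and injectivity of $\pi^*$ (valid since $R \neq \emptyset$) to get the constraint on $L^2$, then recognise the second presenting pair and the second cohomology summand as $K_B \otimes L^{-1}$ via that constraint and Serre duality. The paper itself gives no proof---the proposition is simply quoted from \cite{Be2}---so there is nothing further to compare; your write-up is precisely how Beauville proves it.
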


The above theorems are our main tools to compute the cohomologies of line bundles which we construct in this paper.

\section{$\alpha$-sheaves}

In this section we collect some facts in order to compute the
2-cocycles corresponding to the $G$-invariant line bundles on
curves. In this paper $G$ is an abelian group and $G$ acts on $\mathbb{C}^*$ trivially.
From this assumption the definition and computation of group cohomology become
much simpler than usual.

\subsection{generalities on group cohomology}

In 4.1 and 4.2 we recall some definitions and properties about group cohomology following \cite{Kar}.

\begin{defi}\cite{Kar} Let $G$ be an abelian group and $G$ acts on $\mathbb{C}^*$ trivially. \\
(1) A function $\alpha : \underbrace{G \times \cdots \times G}_n \to \mathbb{C}^*$ is called an n-cochain. \\
(2) An n-cochain $\alpha$ is called normalized if $\alpha(g_1,\cdots,g_n)=1$ whenever any of the $g_i=e \in G.$ \\
(3) A 2-cochain $\alpha$ is called 2-cocycle if $\alpha(x,y)\alpha(xy,z)=\alpha(y,z)\alpha(x,yz)$ for all $x,y,z \in G$, and we denote the abelian group of 2-cocycles by $Z^2(G,\mathbb{C}^*).$ \\
(4) A 2-cocycle $\alpha$ is called 2-coboundary if there exist a 1-cochain $t : G \to \mathbb{C}^*$ such that $\alpha(x,y) = t(x)t(y)t(xy)^{-1}$, and we denote the abelian group of 2-coboundaries by $B^2(G,\mathbb{C}^*)$. \\
(5) $H^2(G,\mathbb{C}^*) := Z^2(G,\mathbb{C}^*)/B^2(G,\mathbb{C}^*)$ is called the Schur multiplier of $G$.
\end{defi}

\subsection{Schur multiplier}

Now we focus on $H^2(G,\mathbb{C}^*)$. We only consider groups of
the form $G \cong (\mathbb{Z}/2)^r$ for some $r \in \mathbb{N}$.
Consider a decomposition of $G = N \times T \cong (\mathbb{Z}/2)^a
\times (\mathbb{Z}/2)^b$ for some $a,b \in \mathbb{N}$.

\begin{defi}
A cocycle $\alpha \in Z^2(G,\mathbb{C}^*)$ is normal if $\alpha(n,t)=1$, for all $n \in N, t \in T.$
\end{defi}

When $\alpha$ is a normal cycle then the following lemmas will enable us to compute $\alpha$.

\begin{lem}{\cite{Kar}}
(1) Each $\alpha \in Z^2(G,\mathbb{C}^*)$ is cohomologous to a normal cocycle $\beta$ such that $\alpha|_{T \times T}=\beta|_{T \times T}$. \\
(2) If $\alpha$ is a normal cocycle, then
$$ \alpha(nt,n't') = \alpha(t,t') \alpha(t,n') \alpha(n,n'), n.n' \in N, t,t' \in T.$$
In particular, a normal cocycle $\alpha$ is uniquely determined by
$$ \alpha|_{N \times N}, \alpha|_{T \times T}, \alpha|_{T \times N}. $$
\end{lem}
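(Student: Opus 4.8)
The plan is to treat the two parts separately. For part (1) I will exhibit an explicit $1$-cochain whose coboundary twist turns $\alpha$ into a normal cocycle while leaving its values on $T\times T$ untouched. For part (2) I will expand $\alpha(nt,n't')$ by repeated use of the $2$-cocycle identity $\alpha(x,y)\alpha(xy,z)=\alpha(y,z)\alpha(x,yz)$, arranging at each step that a factor $\alpha(\text{element of }N,\text{element of }T)$ appears and then disappears by normality, so that only the three ``pure'' restrictions $\alpha|_{N\times N}$, $\alpha|_{T\times T}$, $\alpha|_{T\times N}$ survive.

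For (1): I will first reduce to the case that $\alpha$ is normalized, every cocycle being cohomologous to a normalized one. Let $\pi_N:G\to N$ and $\pi_T:G\to T$ be the projections attached to the decomposition $G=N\times T$, and set $g(x)=\alpha(\pi_N(x),\pi_T(x))$; normalization gives $g(n)=\alpha(n,e)=1$ for $n\in N$ and $g(t)=\alpha(e,t)=1$ for $t\in T$. I will then define $\beta(x,y)=\alpha(x,y)\,g(x)g(y)g(xy)^{-1}$, which is again a $2$-cocycle and is cohomologous to $\alpha$. A direct check gives $\beta(n,t)=\alpha(n,t)\cdot 1\cdot 1\cdot\alpha(n,t)^{-1}=1$ for $n\in N$, $t\in T$, so $\beta$ is normal, while $\beta(t,t')=\alpha(t,t')\,g(t)g(t')g(tt')^{-1}=\alpha(t,t')$ for $t,t'\in T$ because $g$ is identically $1$ on $T$; this is precisely the assertion of (1).

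For (2): assuming $\alpha$ normal, so $\alpha(\nu,\tau)=1$ for all $\nu\in N$, $\tau\in T$, and using that $G$ is abelian, I will run the cocycle identity on the triples $(n,t,n't')$, $(n,n',tt')$, $(t,n',t')$ and $(n',t,t')$ in turn. The first, using $\alpha(n,t)=1$, gives $\alpha(nt,n't')=\alpha(t,n't')\,\alpha(n,n'tt')$; the second, using $\alpha(n',tt')=\alpha(nn',tt')=1$, gives $\alpha(n,n'tt')=\alpha(n,n')$; the third, using $\alpha(n',t')=1$, gives $\alpha(t,n't')=\alpha(t,n')\,\alpha(n't,t')$; the fourth, using $\alpha(n',t)=\alpha(n',tt')=1$, gives $\alpha(n't,t')=\alpha(t,t')$. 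Multiplying these together yields $\alpha(nt,n't')=\alpha(t,t')\,\alpha(t,n')\,\alpha(n,n')$, and the ``in particular'' clause follows at once since every value of $\alpha$ is now expressed through the three named restrictions.

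I expect the main obstacle to be the bookkeeping in part (2): one must choose the order of the four applications of the cocycle identity so that a discardable factor $\alpha(\text{element of }N,\text{element of }T)$ appears each time, and must keep track, using commutativity of $G$, of which products fall back into $N$ and which into $T$ (for instance recognizing $tn't'=n'(tt')$ and $tn'=n't$). In part (1) the only delicate point is insisting that the twisting cochain $g$ be trivial on $T$, since that is exactly what forces $\beta|_{T\times T}=\alpha|_{T\times T}$.
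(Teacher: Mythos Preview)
The paper does not supply its own proof of this lemma; it is simply quoted from Karpilovsky's monograph with the citation \cite{Kar} and no argument. There is therefore nothing in the paper to compare your proof against.

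Your argument is essentially correct and is the standard one. Part (2) is clean: the four applications of the cocycle identity to the triples $(n,t,n't')$, $(n,n',tt')$, $(t,n',t')$, $(n',t,t')$ do exactly what you say, and each time a factor of the form $\alpha(\nu,\tau)$ with $\nu\in N$, $\tau\in T$ appears and is killed by normality.

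One small caveat in part (1): the preliminary step ``reduce to the case that $\alpha$ is normalized'' does not in general preserve $\alpha|_{T\times T}$. If $\alpha$ is not normalized then $\alpha(e,y)=\alpha(x,e)=\alpha(e,e)=:c$ for all $x,y$, and passing to a cohomologous normalized cocycle alters values such as $\alpha(t,t^{-1})$ by a factor involving $c$; in the degenerate situation $N=\{e\}$, $T=G$ the assertion is literally false for $c\neq 1$. So the normalized hypothesis is really part of the statement (as the paper itself tacitly assumes, cf.\ the remark that ``we can set $\alpha$ to be normalized canonically in our geometric case''). Once $\alpha$ is normalized, your twisting cochain $g(x)=\alpha(\pi_N(x),\pi_T(x))$ is well chosen: it satisfies $g|_T\equiv 1$ (forcing $\beta|_{T\times T}=\alpha|_{T\times T}$) and $g(nt)=\alpha(n,t)$ (forcing $\beta(n,t)=1$), which is exactly what is needed.
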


\begin{lem}{\cite{Kar}} $ \alpha|_{N \times N}, \alpha|_{T \times T}, \alpha|_{T \times N} $ determine a normal cocycle $\alpha$ of $Z^2(G,\mathbb{C}^*)$ if and only if the following four conditions hold: \\
(1) $\alpha|_{N \times N} \in Z^2(N,\mathbb{C}^*)$. \\
(2) $\alpha|_{T \times T} \in Z^2(T,\mathbb{C}^*)$. \\
(3) $\alpha(tt',n)=\alpha(t,n)\alpha(t',n), n \in N, t,t' \in T$. \\
(4) $\alpha(t,nn')=\alpha(t,n)\alpha(t,n'), n,n' \in N, t \in T$.
\end{lem}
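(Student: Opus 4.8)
The plan is to prove both implications by direct manipulation of the $2$-cocycle identity $\alpha(x,y)\alpha(xy,z)=\alpha(y,z)\alpha(x,yz)$, organizing the computation around a decomposition of a normal cocycle on $G=N\times T$ into an ``$N$-part'', a ``$T$-part'', and a ``pairing part'', so as to avoid a brute-force check over general elements.

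For the forward implication (a normal cocycle $\alpha$ satisfies (1)--(4)): conditions (1) and (2) are immediate, since the restriction of a $2$-cocycle to a subgroup is again a $2$-cocycle. For (3) I would substitute $x=t$, $y=t'$, $z=n$ into the cocycle identity; using commutativity of $G$ together with the product formula of the preceding lemma, which gives $\alpha(t,nt')=\alpha(t,t')\alpha(t,n)$ once normality (so $\alpha(e,\cdot)=1$) is invoked, the common factor $\alpha(t,t')$ cancels and leaves exactly $\alpha(tt',n)=\alpha(t,n)\alpha(t',n)$. Condition (4) follows in the same way from the triple $x=t$, $y=n$, $z=n'$, this time using $\alpha(t,e)=1$.

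For the converse I would assume (1)--(4) and build the cocycle explicitly. Set $\alpha_N(n_1t_1,n_2t_2):=\alpha|_{N\times N}(n_1,n_2)$ and $\alpha_T(n_1t_1,n_2t_2):=\alpha|_{T\times T}(t_1,t_2)$, the inflations of the prescribed cocycles on $N$ and $T$ along the projections $G\to N$ and $G\to T$; these are $2$-cocycles on $G$ by (1) and (2), since inflation of a cocycle is a cocycle. Set $\alpha_{\mathrm{p}}(n_1t_1,n_2t_2):=\alpha|_{T\times N}(t_1,n_2)$; conditions (3) and (4) say precisely that $(t,n)\mapsto\alpha|_{T\times N}(t,n)$ is bimultiplicative, and a one-line substitution into the cocycle identity (using (3) to split $\alpha(t_1t_2,n_3)$ and (4) to split $\alpha(t_1,n_2n_3)$) shows $\alpha_{\mathrm p}\in Z^2(G,\mathbb{C}^*)$. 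Then $\alpha:=\alpha_N\cdot\alpha_T\cdot\alpha_{\mathrm p}$ is a $2$-cocycle as a product of cocycles; it is normal because $\alpha_{\mathrm p}$ is identically $1$ on $N\times T$ while $\alpha_N,\alpha_T$ are normalized; and evaluating on $N\times N$, $T\times T$, and $T\times N$ recovers the three prescribed restrictions (using that a bimultiplicative pairing is automatically normalized, e.g. $\alpha_{\mathrm p}(e,n)=\alpha_{\mathrm p}(e,n)^2=1$). Since the preceding lemma guarantees that a normal cocycle is uniquely determined by those three restrictions, this $\alpha$ is the unique normal cocycle with the given data, which is the assertion.

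I do not anticipate a genuine obstacle; the content is bookkeeping. The one point needing a small idea rather than pure routine is the factorization $\alpha=\alpha_N\cdot\alpha_T\cdot\alpha_{\mathrm p}$: recognizing that the mixed term $\alpha|_{T\times N}$ defines a cocycle on its own exactly under hypotheses (3)--(4) reduces the verification of the cocycle identity on all of $N\times T$ to an immediate consequence, rather than a messy case analysis over $x=n_1t_1$, $y=n_2t_2$, $z=n_3t_3$. If preferred, that direct substitution can be carried out as well, applying (1)--(4) and commutativity repeatedly; I would present the factored argument for transparency.
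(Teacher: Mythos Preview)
The paper does not actually prove this lemma: it is quoted verbatim from Karpilovsky \cite{Kar} and stated without argument, so there is no ``paper's own proof'' to compare against. Your task is thus to supply a proof where the author chose to cite one, and your proposal does this correctly.

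Your forward direction is fine; the substitutions you describe, combined with the product formula $\alpha(nt,n't')=\alpha(t,t')\alpha(t,n')\alpha(n,n')$ from the preceding lemma, give (3) and (4) cleanly. Your converse via the factorization $\alpha=\alpha_N\cdot\alpha_T\cdot\alpha_{\mathrm p}$ is exactly the right structural idea and matches the standard treatment: the bimultiplicativity encoded in (3)--(4) is precisely what makes the mixed factor $\alpha_{\mathrm p}$ a cocycle on $G$, and the product of cocycles is a cocycle. One small point worth making explicit: to conclude that $\alpha$ is \emph{normal} you need $\alpha_N(n,t)=\alpha|_{N\times N}(n,e)=1$ and $\alpha_T(n,t)=\alpha|_{T\times T}(e,t)=1$, i.e.\ that the given data on $N$ and $T$ are normalized. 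For an arbitrary $2$-cocycle one only has $\alpha(\,\cdot\,,e)=\alpha(e,\,\cdot\,)=\alpha(e,e)$, a constant not a priori equal to $1$. In the paper's setting this is harmless (Remark~4.6 notes the cocycles arising geometrically are canonically normalized, and one may always replace a cocycle by a normalized representative), but you should either state this hypothesis or insert the one-line reduction.
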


\subsection{Generalities on $\alpha$-sheaves}

Let $X$ be an algebraic variety over $\mathbb{C}$, let $G$ be a
finite group acting on $X$, and let $\alpha$ be a 2-cocyle of $G$
with coefficients in $\mathbb{C}^*$. Elagin introduced the notion of
$\alpha$-sheaves, and proved some properties of them in \cite{E1},
\cite{E2}.

\begin{defi}\cite{E2} An $\alpha$-sheaf on $X$ is a coherent sheaf $F$ together with isomorphisms $\theta_g : g^*F \to F$ for all $g \in G$ such that $\theta_{gh} = \alpha(g,h) \theta_h \circ h^*\theta_g$ for any pair $g,h \in G$.
\end{defi}

\begin{rmk}
We can set $\alpha$ to be normalized canonically in our geometric
case.
\end{rmk}

\begin{prop}\cite[Proposition 1.2]{E2} The $\alpha$-sheaves on $X$ form abelian category. Let $\alpha$, $\beta$ be 2-cocycles of $G$. let $F$ and $G$ be $\alpha$- and $\beta$- sheaves on $X$. Then
$F \otimes G$ is an $\alpha \beta$ sheaf on $X$.
\end{prop}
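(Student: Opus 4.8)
The plan is to treat the two assertions of the proposition separately, following Elagin. For the first assertion, I would fix the notion of morphism: a morphism of $\alpha$-sheaves $(F,\theta^F)\to(F',\theta^{F'})$ is a morphism of coherent sheaves $f\colon F\to F'$ such that $f\circ\theta_g^F=\theta_g^{F'}\circ g^*f$ for every $g\in G$. The hom-sets are then visibly subgroups of $\mathrm{Hom}_{\mathcal O_X}(F,F')$, there is an evident zero object, and $(F\oplus F',\theta^F\oplus\theta^{F'})$ is an $\alpha$-sheaf, so the category is additive. The only substantive point is that kernels and cokernels exist and again carry an $\alpha$-sheaf structure: since each $g\in G$ acts as an automorphism of $X$, the functor $g^*$ is exact, so for a morphism $f$ of $\alpha$-sheaves the sheaf-theoretic $K=\ker f$ and $Q=\operatorname{coker}f$ receive canonical isomorphisms $\theta_g^K\colon g^*K\to K$ and $\theta_g^Q\colon g^*Q\to Q$ by restriction of $\theta_g^F$ and by passage to the quotient from $\theta_g^{F'}$, respectively. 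One checks the cocycle identity $\theta_{gh}^{K}=\alpha(g,h)\,\theta_h^K\circ h^*\theta_g^K$ by applying the exact functor $h^*g^*$ and the universal properties of kernel and cokernel to the corresponding identity for $\theta^F$, $\theta^{F'}$; the scalars $\alpha(g,h)$ are inherited verbatim. The same argument handles images and coimages, and they agree because they do on the underlying coherent sheaves, so the category is abelian.

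For the second assertion, the plan is a diagram chase using the canonical, scalar-free isomorphisms $c_g\colon g^*(F\otimes G)\xrightarrow{\ \sim\ }g^*F\otimes g^*G$ natural in both arguments, together with the canonical isomorphisms $d_{g,h}\colon h^*g^*(-)\xrightarrow{\ \sim\ }(gh)^*(-)$, which satisfy the usual coherence compatibility between $c_{gh}$, $c_g$, $c_h$ and $d_{g,h}$. Given an $\alpha$-sheaf $(F,\theta^F)$ and a $\beta$-sheaf $(G,\theta^G)$, I would define $\psi_g\colon g^*(F\otimes G)\to F\otimes G$ as the composite $g^*(F\otimes G)\xrightarrow{c_g}g^*F\otimes g^*G\xrightarrow{\theta_g^F\otimes\theta_g^G}F\otimes G$. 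Expanding $\psi_h\circ h^*\psi_g$ by naturality of $c$ and the coherence relation identifying it (up to the $d$'s) with the composite through $g^*h^*F\otimes g^*h^*F$, one finds that $\psi_h\circ h^*\psi_g$ and $\psi_{gh}$ differ precisely by the factor $(\theta_h^F\circ h^*\theta_g^F)$ versus $\theta_{gh}^F$ on the first tensor slot, which is the scalar $\alpha(g,h)$, and the analogous factor $\beta(g,h)$ on the second slot. Hence $\psi_{gh}=\alpha(g,h)\beta(g,h)\,\psi_h\circ h^*\psi_g=(\alpha\beta)(g,h)\,\psi_h\circ h^*\psi_g$, so $(F\otimes G,\psi)$ is an $\alpha\beta$-sheaf, as claimed.

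The main obstacle is purely bookkeeping: making the coherence isomorphisms $c_g$ and $d_{g,h}$ explicit enough that the scalars $\alpha(g,h)$ and $\beta(g,h)$ visibly multiply rather than interfere, and keeping the left/right group-action convention consistent so that $(gh)^*$ (and not $(hg)^*$) appears throughout. Since every $c_g$ and $d_{g,h}$ is a canonical isomorphism carrying no scalar, the only scalars entering the computation are those already built into the $\theta$'s, so once the chase is set up correctly the conclusion is forced. By the remark following Definition~4.10 one may take $\alpha$, $\beta$, and hence $\alpha\beta$, to be normalized from the start, which makes $\psi_e=\mathrm{id}$ and removes any need to treat the identity element of $G$ as a special case.
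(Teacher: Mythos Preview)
Your proposal is correct and follows essentially the same approach as the paper for the tensor-product claim: the paper's entire proof is the single line $\theta_{gh}\otimes\theta_{gh}=\alpha(g,h)\beta(g,h)(\theta_h\circ h^*\theta_g)\otimes(\theta_h\circ h^*\theta_g)=\alpha(g,h)\beta(g,h)(\theta_h\otimes\theta_h)\circ h^*(\theta_g\otimes\theta_g)$, which is your diagram chase with the canonical coherence isomorphisms $c_g$, $d_{g,h}$ suppressed. The paper does not prove the abelian-category assertion at all, treating it as part of the citation to Elagin, so your detailed verification of kernels and cokernels simply fills in what the paper takes as given.
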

\begin{proof}
$\theta_{gh} \otimes \theta_{gh} = \alpha(g,h) \beta(g,h) ( \theta_h \circ h^*\theta_g ) \otimes ( \theta_h \circ h^*\theta_g ) = \alpha(g,h) \beta(g,h) (\theta_h \otimes \theta_h) \circ h^*(\theta_g \otimes \theta_g)$ for any pair $g,h \in G.$
\end{proof}

\subsection{Basic example : $\mathcal{O}(1)$ bundle on $\mathbb{P}^1$ with $(\mathbb{Z}/2)^2$-action}

In this subsection we consider $\mathbb{P}^1$ with $G=(\mathbb{Z}/2)^2$-action and the $G$-invariant line bundle $\mathcal{O}(1).$

Let $G=(\mathbb{Z}/2)e_1 \oplus (\mathbb{Z}/2)e_2$, and $G$ acts on $\mathbb{P}^1$ as $e_1 \cdot [x:y]=[-x:y]$ and $e_2 \cdot [x:y]=[y:x]$. Let $U=Spec(\mathbb{C}[\frac{x}{y}])$ and $V=Spec(\mathbb{C}[\frac{y}{x}])$ be an affine open covering of $\mathbb{P}^1$ and let $\mathcal{O}(1) \cong \mathcal{O}([1:0])$ be a $G$-invariant line bundle. In this case we can compute $[\alpha] \in H^2(G,\mathbb{C}^*)$ by explicit computations as follows.

Fix four isomorphisms

$$ \theta_{0}(U) : 0^*\mathcal{O}([1:0])(U) \cong \mathbb{C}[\frac{x}{y}] \otimes_{\mathbb{C}[\frac{x}{y}]} \mathbb{C}[\frac{x}{y}] \to \mathbb{C}[\frac{x}{y}] \cong \mathcal{O}([1:0])(U), 1 \otimes 1 \mapsto 1, $$

$$ \theta_{0}(V) : 0^*\mathcal{O}([1:0])(V) \cong \frac{x}{y}\mathbb{C}[\frac{y}{x}] \otimes_{\mathbb{C}[\frac{y}{x}]} \mathbb{C}[\frac{y}{x}] \to \frac{x}{y}\mathbb{C}[\frac{y}{x}] \cong \mathcal{O}([1:0])(V), \frac{x}{y} \otimes 1 \mapsto \frac{x}{y}, $$

$$ \theta_{e_1}(U) : e_1^*\mathcal{O}([1:0])(U) \cong \mathbb{C}[\frac{x}{y}] \otimes_{\mathbb{C}[\frac{x}{y}]} \mathbb{C}[\frac{x}{y}] \to \mathbb{C}[\frac{x}{y}] \cong \mathcal{O}([1:0])(U), 1 \otimes 1 \mapsto 1, $$

$$ \theta_{e_1}(V) : e_1^*\mathcal{O}([1:0])(V) \cong \frac{x}{y} \mathbb{C}[\frac{y}{x}] \otimes_{\mathbb{C}[\frac{y}{x}]} \mathbb{C}[\frac{y}{x}] \to \frac{x}{y}\mathbb{C}[\frac{y}{x}] \cong \mathcal{O}([1:0])(V), \frac{x}{y} \otimes 1 \mapsto -\frac{x}{y}, $$

$$ \theta_{e_2}(U) : e_2^*\mathcal{O}([1:0])(U) \cong \frac{x}{y} \mathbb{C}[\frac{y}{x}] \otimes_{\mathbb{C}[\frac{y}{x}]} \mathbb{C}[\frac{x}{y}] \to \mathbb{C}[\frac{x}{y}] \cong \mathcal{O}([1:0])(U), \frac{x}{y} \otimes 1 \mapsto 1, $$

$$ \theta_{e_2}(V) : e_2^*\mathcal{O}([1:0])(V) \cong \mathbb{C}[\frac{x}{y}] \otimes_{\mathbb{C}[\frac{x}{y}]} \mathbb{C}[\frac{y}{x}] \to \frac{x}{y}\mathbb{C}[\frac{y}{x}] \cong \mathcal{O}([1:0])(V), 1 \otimes 1 \mapsto \frac{x}{y}, $$

$$ \theta_{e_1+e_2}(U) : (e_1+e_2)^*\mathcal{O}([1:0])(U) \cong \frac{x}{y} \mathbb{C}[\frac{y}{x}] \otimes_{\mathbb{C}[\frac{y}{x}]} \mathbb{C}[\frac{x}{y}] \to \mathbb{C}[\frac{x}{y}] \cong \mathcal{O}([1:0])(U), \frac{x}{y} \otimes 1 \mapsto -1, $$

$$ \theta_{e_1+e_2}(V) : (e_1+e_2)^*\mathcal{O}([1:0])(V) \cong \mathbb{C}[\frac{x}{y}] \otimes_{\mathbb{C}[\frac{x}{y}]} \mathbb{C}[\frac{y}{x}] \to \frac{x}{y}\mathbb{C}[\frac{y}{x}] \cong \mathcal{O}([1:0])(V), 1 \otimes 1 \mapsto \frac{x}{y}, $$

To compute $\alpha(e_1,e_2), \alpha(e_2,e_1)$ from
$$ \theta_{e_1+e_2}=\alpha(e_1,e_2)\theta_{e_2} \circ {e_2}^*\theta_{e_1}, $$
$$ \theta_{e_1+e_2}=\alpha(e_2,e_1)\theta_{e_1} \circ {e_1}^*\theta_{e_2}, $$
consider the following isomorphisms.

$$ e_2^*\theta_{e_1}(U) : e_2^*e_1^*\mathcal{O}([1:0])(U) \cong \frac{x}{y} \mathbb{C}[\frac{y}{x}] \otimes_{\mathbb{C}[\frac{y}{x}]} \mathbb{C}[\frac{y}{x}] \otimes_{\mathbb{C}[\frac{y}{x}]} \mathbb{C}[\frac{x}{y}] \to \frac{x}{y} \mathbb{C}[\frac{y}{x}] \otimes_{\mathbb{C}[\frac{y}{x}]} \mathbb{C}[\frac{x}{y}] \cong e_2^*\mathcal{O}([1:0])(U), $$
$$ \frac{x}{y} \otimes 1 \otimes 1 \mapsto -\frac{x}{y} \otimes 1, $$

$$ e_2^*\theta_{e_1}(V) : e_2^*e_1^*\mathcal{O}([1:0])(V) \cong \mathbb{C}[\frac{x}{y}] \otimes_{\mathbb{C}[\frac{x}{y}]} \mathbb{C}[\frac{x}{y}] \otimes_{\mathbb{C}[\frac{x}{y}]} \mathbb{C}[\frac{y}{x}] \to \mathbb{C}[\frac{x}{y}] \otimes_{\mathbb{C}[\frac{x}{y}]} \mathbb{C}[\frac{y}{x}] \cong e_2^*\mathcal{O}([1:0])(V), $$
$$ 1 \otimes 1 \otimes 1 \mapsto 1 \otimes 1, $$

$$ e_1^*\theta_{e_2}(U) : e_1^*e_2^*\mathcal{O}([1:0])(U) \cong \frac{x}{y} \mathbb{C}[\frac{y}{x}] \otimes_{\mathbb{C}[\frac{y}{x}]} \mathbb{C}[\frac{x}{y}] \otimes_{\mathbb{C}[\frac{x}{y}]} \mathbb{C}[\frac{x}{y}] \to \mathbb{C}[\frac{x}{y}] \otimes_{\mathbb{C}[\frac{x}{y}]} \mathbb{C}[\frac{x}{y}] \cong e_1^*\mathcal{O}([1:0])(U), $$
$$ \frac{x}{y} \otimes 1 \otimes 1 \mapsto 1 \otimes 1, $$

$$ e_1^*\theta_{e_2}(V) : e_1^*e_2^*\mathcal{O}([1:0])(V) \cong \mathbb{C}[\frac{x}{y}] \otimes_{\mathbb{C}[\frac{x}{y}]} \mathbb{C}[\frac{y}{x}] \otimes_{\mathbb{C}[\frac{y}{x}]} \mathbb{C}[\frac{y}{x}] \to \frac{x}{y} \mathbb{C}[\frac{y}{x}] \otimes_{\mathbb{C}[\frac{y}{x}]} \mathbb{C}[\frac{y}{x}] \cong e_1^*\mathcal{O}([1:0])(V),$$
$$ 1 \otimes 1 \otimes 1 \mapsto \frac{x}{y} \otimes 1. $$

Then we get $\alpha(e_1,e_2)=1$, $\alpha(e_2,e_1)=-1$.

Also to get $\alpha(e_1,e_1)$, $\alpha(e_2,e_2)$ from
$$ \theta_{0}=\alpha(e_1,e_1)\theta_{e_1} \circ {e_1}^*\theta_{e_1}, $$
$$ \theta_{0}=\alpha(e_2,e_2)\theta_{e_2} \circ {e_2}^*\theta_{e_2}, $$
we consider the following isomorphisms.

$$ e_1^*\theta_{e_1}(U) : e_1^*e_1^*\mathcal{O}([1:0])(U) \cong \mathbb{C}[\frac{x}{y}] \otimes_{\mathbb{C}[\frac{x}{y}]} \mathbb{C}[\frac{x}{y}] \otimes_{\mathbb{C}[\frac{x}{y}]} \mathbb{C}[\frac{x}{y}] \to \mathbb{C}[\frac{x}{y}] \otimes_{\mathbb{C}[\frac{x}{y}]} \mathbb{C}[\frac{x}{y}] \cong e_1^*\mathcal{O}([1:0])(U), $$
$$ 1 \otimes 1 \otimes 1 \mapsto 1 \otimes 1, $$

$$ e_1^*\theta_{e_1}(V) : e_1^*e_1^*\mathcal{O}([1:0])(V) \cong \frac{x}{y} \mathbb{C}[\frac{y}{x}] \otimes_{\mathbb{C}[\frac{y}{x}]} \mathbb{C}[\frac{y}{x}] \otimes_{\mathbb{C}[\frac{y}{x}]} \mathbb{C}[\frac{y}{x}] \to \frac{x}{y} \mathbb{C}[\frac{y}{x}] \otimes_{\mathbb{C}[\frac{y}{x}]} \mathbb{C}[\frac{y}{x}] \cong e_1^*\mathcal{O}([1:0])(V), $$
$$ \frac{x}{y} \otimes 1 \otimes 1 \mapsto -\frac{x}{y} \otimes 1, $$

$$ e_2^*\theta_{e_2}(U) : e_2^*e_2^*\mathcal{O}([1:0])(U) \cong \mathbb{C}[\frac{x}{y}] \otimes_{\mathbb{C}[\frac{x}{y}]} \mathbb{C}[\frac{y}{x}] \otimes_{\mathbb{C}[\frac{y}{x}]} \mathbb{C}[\frac{x}{y}] \to \frac{x}{y} \mathbb{C}[\frac{y}{x}] \otimes_{\mathbb{C}[\frac{y}{x}]} \mathbb{C}[\frac{x}{y}] \cong e_2^*\mathcal{O}([1:0])(U), $$
$$ 1 \otimes 1 \otimes 1 \mapsto \frac{x}{y} \otimes 1, $$

$$ e_2^*\theta_{e_2}(V) : e_2^*e_2^*\mathcal{O}([1:0])(V) \cong \frac{x}{y} \mathbb{C}[\frac{y}{x}] \otimes_{\mathbb{C}[\frac{y}{x}]} \mathbb{C}[\frac{x}{y}] \otimes_{\mathbb{C}[\frac{x}{y}]} \mathbb{C}[\frac{y}{x}] \to \mathbb{C}[\frac{x}{y}] \otimes_{\mathbb{C}[\frac{x}{y}]} \mathbb{C}[\frac{y}{x}] \cong e_2^*\mathcal{O}([1:0])(V),$$
$$ \frac{x}{y} \otimes 1 \otimes 1 \mapsto 1 \otimes 1. $$

Then we get $\alpha(e_1,e_1)=1$, $\alpha(e_2,e_2)=1$.

Let $N=(\mathbb{Z}/2)e_1$ and let $T=(\mathbb{Z}/2)e_2$ be a decomposition of $G = N \times T$. The above isomorphisms are selected so that our $\alpha$ to be normalized. Finally we can compute $\alpha$ as below using the general properties of the normal cycles stated in the previous subsection. \\

\begin{center}
\begin{tabular}{|c|c|c|c|c|} \hline
$\alpha$ & 0 & $e_1$ & $e_2$ & $e_1+e_2$ \\ \hline $0$ & 1 & 1 & 1 &
1 \\ \hline $e_1$ & 1 & 1 & 1 & 1 \\ \hline $e_2$ & 1 & -1 & 1 & -1
\\ \hline $e_1+e_2$ & 1 & -1 & 1 & -1 \\ \hline
\end{tabular}
\end{center}

\subsection{Pullback of invariant line bundle}

Let $\sigma$ be an involution of $C$ and suppose that $T=\langle
\sigma \rangle$ extends to a group of automorphisms $G=N \oplus T$
of $C$. Let $B=C/{\langle \sigma \rangle}$ be the quotient curve and
$\pi : C \to B$ be the quotient map. Let $L$ be an $N$-invariant
line bundles on $B$ whose Schur multiplier is $\beta \in
H^2(N,\mathbb{C}^*)$. In this case we can define $\theta_g:g^*\pi^*L
\to \pi^*L$ and compute the Schur multiplier of $\alpha \in
H^2(G,\mathbb{C})$ of $\pi^*L$ using $N$-invariant structure of $L$.

\begin{lem}
(1) $\pi^*L$ is a $G$-invariant line bundle on $C$. \\
(2) $ \alpha(n,n')=\beta(n,n'), \forall n,n' \in N.$ \\
(3) $ \alpha(\sigma,\sigma)=1 $, \\
(4) $ \alpha(\sigma,n)=\alpha(n,\sigma)=1, \forall n \in N$.
\end{lem}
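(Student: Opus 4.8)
The statement to prove is Lemma 4.19 (the "Pullback of invariant line bundle" lemma), which asserts that if $\sigma$ is an involution of $C$ extending to $G = N \oplus T$ with $T = \langle \sigma \rangle$, and $L$ is an $N$-invariant line bundle on $B = C/\langle\sigma\rangle$ with Schur multiplier $\beta \in H^2(N,\mathbb{C}^*)$, then $\pi^*L$ is $G$-invariant with cocycle $\alpha$ satisfying $\alpha|_{N\times N} = \beta$, $\alpha(\sigma,\sigma) = 1$, and $\alpha(\sigma,n) = \alpha(n,\sigma) = 1$ for all $n \in N$. The plan is to build the $G$-equivariant-up-to-scalar structure on $\pi^*L$ explicitly from the $N$-equivariant-up-to-scalar structure on $L$, exploiting that $\pi$ is $G$-equivariant for the trivial $G$-action on $B$ (more precisely, $N$ acts on $B$ and $\sigma$ acts trivially on $B$, since $B = C/\langle\sigma\rangle$). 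Then I would read off the four cocycle values directly from the definition $\theta_{gh} = \alpha(g,h)\,\theta_h \circ h^*\theta_g$.

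**Construction of the $\theta_g$.** Since $\sigma$ acts trivially on $B$, the quotient map $\pi \colon C \to B$ satisfies $\pi \circ n = \bar n \circ \pi$ for $n \in N$ (where $\bar n$ is the induced automorphism of $B$) and $\pi \circ \sigma = \pi$. Given the isomorphisms $\psi_n \colon n^*L \to L$ on $B$ realizing the $N$-invariant structure with $\psi_{nn'} = \beta(n,n')\,\psi_{n'} \circ (n')^*\psi_n$, I define $\theta_n \colon n^*\pi^*L = \pi^*(\bar n^* L) \to \pi^* L$ to be $\pi^*\psi_{\bar n}$, and I define $\theta_\sigma \colon \sigma^*\pi^*L \to \pi^*L$ to be the canonical isomorphism coming from $\pi\circ\sigma = \pi$, i.e. $\sigma^*\pi^* L = (\pi\sigma)^*L = \pi^*L$ identically, so $\theta_\sigma = \mathrm{id}$. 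More generally for $g = n + \sigma^\varepsilon$ I set $\theta_g = \theta_n$ under the identification $g^*\pi^*L = n^*\pi^*L$. One checks this is a well-defined collection of isomorphisms $g^*\pi^*L \to \pi^*L$; this establishes part (1), that $\pi^*L$ is $G$-invariant (the line bundle is $\sigma$-invariant by Beauville's Proposition 1, and $N$-invariance is pulled back from $B$).

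**Reading off the cocycle.** For $n, n' \in N$: since $\theta_n = \pi^*\psi_{\bar n}$ and pullback is a tensor functor compatible with composition, $\theta_n \circ n^*\theta_{n'} = \pi^*(\psi_{\bar n} \circ \bar n^*\psi_{\bar n'})$, and comparing with $\theta_{n+n'} = \pi^*\psi_{\overline{n+n'}}$ gives $\alpha(n,n') = \beta(n,n')$, which is (2). For $\alpha(\sigma,\sigma)$: $\theta_\sigma = \mathrm{id}$ and $\sigma^*\theta_\sigma = \mathrm{id}$, while $\theta_{\sigma + \sigma} = \theta_0 = \mathrm{id}$ (using $\alpha$ normalized), so $\alpha(\sigma,\sigma) = 1$, which is (3). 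For (4), $\alpha(\sigma, n)$ compares $\theta_{\sigma+n} = \theta_n$ with $\alpha(\sigma,n)\,\theta_n \circ n^*\theta_\sigma = \alpha(\sigma,n)\,\theta_n \circ \mathrm{id} = \alpha(\sigma,n)\,\theta_n$, forcing $\alpha(\sigma,n) = 1$; symmetrically $\alpha(n,\sigma)$ compares $\theta_{n+\sigma} = \theta_n$ with $\alpha(n,\sigma)\,\theta_\sigma \circ \sigma^*\theta_n = \alpha(n,\sigma)\,\sigma^*\theta_n$, and since $\pi\circ\sigma = \pi$ we have $\sigma^*\theta_n = \sigma^*\pi^*\psi_{\bar n} = \pi^*\psi_{\bar n} = \theta_n$ under the canonical identifications, so $\alpha(n,\sigma) = 1$.

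**Main obstacle.** The genuine care is needed in the bookkeeping of the canonical identifications $\sigma^*\pi^*L = \pi^*L$ and $n^*\pi^*L = \pi^*(\bar n^*L)$: one must verify that the chosen $\theta_\sigma = \mathrm{id}$ is compatible with the $N$-structure in the sense that $\sigma^*\theta_n$ really equals $\theta_n$ and not merely equals it up to a scalar that could slip into $\alpha(n,\sigma)$. This hinges on the fact that the automorphisms $\bar n$ of $B$ literally commute with the identity (there is no nontrivial relation between $\sigma$ and $n$ on $B$ because $\sigma$ acts trivially there), so no coboundary ambiguity enters. I expect the only subtlety is confirming that $\alpha$ so constructed is normalized and normal in the sense of Definition 4.8, so that Lemmas 4.9 and 4.10 apply to extend these boundary values to all of $G \times G$; but the values computed are exactly the data $\alpha|_{N\times N}$, $\alpha|_{T\times T}$, $\alpha|_{T\times N}$ that Lemma 4.9(2) says determine $\alpha$, so no further computation is needed.
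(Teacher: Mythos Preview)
Your proof is correct and follows essentially the same approach as the paper: both construct $\theta_g$ on $\pi^*L$ by identifying $g^*\pi^*L$ with $\pi^*(\bar g^*L)$ via the commutative squares $\pi\circ\sigma=\pi$ and $\pi\circ n=\bar n\circ\pi$, and then pulling back the given $\psi_{\bar n}$ from $B$. The paper in fact leaves the verification of (2)--(4) ``to readers,'' so your explicit cocycle computations (and your identification of the one genuine point of care, that $\sigma^*\theta_n=\theta_n$ under the canonical identifications) are more detailed than the original.
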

\begin{proof}

We have the following three commutative diagram.

\begin{displaymath}
    \xymatrix{C \ar[rd]_{\pi} \ar[rr]^{\sigma} & & \ar[ld]^{\pi} C \\
               & B & }
\end{displaymath}

\begin{displaymath}
    \xymatrix{C \ar[d]^{\pi} \ar[r]^{n} & C \ar[d]^{\pi} \\
              B \ar[r]^{n} & B }
\end{displaymath}

\begin{displaymath}
    \xymatrix{C \ar[d]^{\pi} \ar[r]^{n \sigma = \sigma n} & C \ar[d]^{\pi} \\
              B \ar[r]^{n} & B }
\end{displaymath}

Then (1) is obvious. From the above diagrams we have isomorphisms $n^*\pi^*L
\cong \pi^*n^*L$, $\sigma^*\pi^*L \cong \pi^*\sigma^*L$, $(n \sigma)^*\pi^*L \cong
\pi^*n^*L$. These isomorphisms enable us to define
$\theta_n:n^*\pi^*L \to \pi^*L$, $\theta_{\sigma}:\sigma^*\pi^*L \to \pi^*L$,
$\theta_{n \sigma}:(n \sigma)^*\pi^*L \to \pi^*L$ on $C$ via the pullback of
$\theta_n : n^*L \to L$ on $B$. It suffices to check that these
isomorphisms satisfy (2), (3), (4) in a fixed affine chart. We leave
this to readers.
\end{proof}

\begin{rmk}
Let $G=N \oplus T$ be the decomposition of $G$ as above. Because
$\alpha$ is a normal cocyle we can compute $\alpha(g,g')$ for all
$g,g' \in G$ from the general properties of normal cycles.
\end{rmk}

\section{$G=(\mathbb{Z}/2)^3$ case}

Let $S=(C \times D)/G$ be a surface isogenous to a higher product with $p_g=q=0$ and $G=(\mathbb{Z}/2)^3$. In this case the curves are a hyperelliptic curve of genus 3 and an elliptic-hyperelliptic curve of genus 5. Let $C$ be the hyperelliptic curve of genus 3 and $D$ be the elliptic-hyperelliptic curve of genus 5. Let $\pi_C : C \to \mathbb{P}^1$, $\pi_D : D \to \mathbb{P}^1$ be the quotient maps. Then $\pi_C$ has 5 branch points and $\pi_D$ has 6 branch points. We may assume that the stabilizer elements of $C$ are $(e_1,e_2,e_3,e_1,e_2+e_3)$, and the stabilizer elements of $D$ are $(e_1+e_2,e_1+e_3,e_1+e_2+e_3,e_1+e_2,e_1+e_3,e_1+e_2+e_3)$. Let $E_1, E_2, E_3, E_4, E_5$ be the corresponding set-theoretic fibers on $C$ and let $F_1, F_2, F_3, F_4, F_5, F_6$ be the corresponding set-theoretic fibers on $D$. For detail, see \cite{BC}.

First we construct $G$-invariant theta characteristic $\kappa_C$ on $C$.

\begin{lem}
There exist $G$-invariant theta characteristic $\kappa_C$ on $C$ such that $h^0(\kappa_C) = h^1(\kappa_C) = 0$.
\end{lem}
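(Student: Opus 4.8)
The plan is to build $\kappa_C$ from Beauville's classification of $\sigma$-invariant theta characteristics (the proposition on $\sigma$-invariant theta characteristics recalled above, \cite[Proposition 2]{Be2}) applied to the hyperelliptic involution of $C$, and then to upgrade $\sigma$-invariance to $G$-invariance using the pullback lemma above. First I would locate the hyperelliptic involution inside $G=(\mathbb{Z}/2)^3$: among the stabilizer generators $(e_1,e_2,e_3,e_1,e_2+e_3)$ the element $e_1$ is the only one occurring twice, so $\mathrm{Fix}(e_1)=E_1\sqcup E_4$ has $8=2g_C+2$ points and hence $C/\langle e_1\rangle\cong\mathbb{P}^1$ by Riemann--Hurwitz; thus $\sigma:=e_1$ is the hyperelliptic involution. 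Write $G=T\oplus N$ with $T=\langle\sigma\rangle$ and $N=\langle e_2,e_3\rangle\cong(\mathbb{Z}/2)^2$, put $B:=C/\sigma\cong\mathbb{P}^1$, and let $\pi:C\to B$ be the quotient map, with ramification $R=E_1\sqcup E_4$, so $\deg\pi_*R=8$ and the double cover corresponds to $\rho=\mathcal{O}_B(4)$.

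Next I would write down the candidate: take $E:=E_1$ (the set-theoretic fiber over the first branch point, a single $G$-orbit of size $4$ contained in $R$), $L:=\mathcal{O}_B(-1)$, and $\kappa_C:=\pi^*L\otimes\mathcal{O}_C(E)$. The hypothesis $L^{\otimes 2}\cong K_B\otimes\rho(-\pi_*E)$ of Beauville's proposition holds because both sides are line bundles on $\mathbb{P}^1$ of degree $-2=(-2)+4-4$; combined with the Hurwitz formula $K_C\cong\pi^*(K_B\otimes\rho)$ and the identity $\pi^*\pi_*E=2E$ (the points of $E$ being ramification points), this yields $\kappa_C^{\otimes2}\cong K_C$, so $\kappa_C$ is a $\sigma$-invariant theta characteristic. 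Part (2) of the proposition then gives $h^0(\kappa_C)=h^0(L)+h^1(L)=h^0(\mathcal{O}_{\mathbb{P}^1}(-1))+h^1(\mathcal{O}_{\mathbb{P}^1}(-1))=0$, and since $K_C\otimes\kappa_C^{-1}\cong\kappa_C$, Serre duality gives $h^1(\kappa_C)=h^0(\kappa_C)=0$.

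It remains to promote $\sigma$-invariance to $G$-invariance. Here $\pi^*L$ is $G$-invariant by the pullback lemma above, because $L=\mathcal{O}_B(-1)$ is automatically $N$-invariant ($N$ acts on $B\cong\mathbb{P}^1$, hence trivially on $Pic(\mathbb{P}^1)$); and $\mathcal{O}_C(E)$ is $G$-invariant because $E=E_1$ is a single $G$-orbit, so $g^*\mathcal{O}_C(E)=\mathcal{O}_C(g^{-1}E)=\mathcal{O}_C(E)$ for every $g\in G$. Therefore $\kappa_C=\pi^*L\otimes\mathcal{O}_C(E)$ is a $G$-invariant theta characteristic with $h^0(\kappa_C)=h^1(\kappa_C)=0$, as required.

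The step I expect to be the real point is the choice of the divisor $E$. Vanishing of $h^0$ and $h^1$ forces $\deg L=-1$ (the unique degree on $\mathbb{P}^1$ with $h^0=h^1=0$), which by the degree count forces $|E|=4$; simultaneously, $G$-invariance forces $E$ to be a union of $G$-orbits inside $R$. These two demands are compatible precisely because $R$ splits into the two size-$4$ orbits $E_1$ and $E_4$, and verifying this — i.e., that $\mathrm{Fix}(\sigma)=E_1\sqcup E_4$ and that each $E_i$ is a full $G$-orbit — is the one place where the specific stabilizer data $(e_1,e_2,e_3,e_1,e_2+e_3)$ of this surface genuinely enters. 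Everything else is a mechanical application of Beauville's propositions and the pullback lemma, and the same template should handle the curve $D$ and, with the appropriate stabilizer data, the $(\mathbb{Z}/2)^4$ case.
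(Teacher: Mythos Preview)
Your proof is correct and follows essentially the same approach as the paper: the paper defines $\kappa_C=(\pi_C^1)^*\mathcal{O}(-1)\otimes\mathcal{O}_C(E_1)$ for the $e_1$-quotient $\pi_C^1:C\to\mathbb{P}^1$, invokes Beauville's Proposition~2 for the vanishing, and argues $G$-invariance from the $N$-invariance of $\mathcal{O}(-1)$ on $\mathbb{P}^1$ together with the fact that $\mathcal{O}_C(E_1)$ is $G$-equivariant. You have supplied more detail (identifying $e_1$ as the hyperelliptic involution, verifying the degree condition, spelling out the Serre duality step), but the construction and the logic are the same.
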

\begin{proof}
$C$ is a hyperelliptic curve of genus 3. Let $\pi_C^1 : C \to
\mathbb{P}^1$ be the quotient by $(\mathbb{Z}/2)e_1$ action. We set
$\kappa_C={\pi_C^1}^*\mathcal{O}(-1) \otimes \mathcal{O}_C(E_1)$ be
a line bundle. From the Proposition of \cite{Be2} we see that
$\kappa_C$ is a theta characteristic with
$h^0(\kappa_C)=h^1(\kappa_C)=0$. Since $\mathcal{O}_C(E_1)$ is
$G$-equivariant line bundle, it suffice to show that
${\pi_C^1}^*\mathcal{O}(-1)$ is a $G$-invariant line bundle. The
action of $G$ induces $(\mathbb{Z}/2)e_2 \oplus
(\mathbb{Z}/2)e_3$-action on $\mathbb{P}^1$, and $\mathcal{O}(-1)$
is an invariant line bundle on $\mathbb{P}^1$. Therefore
${\pi_C^1}^*\mathcal{O}(-1)$ is a $G$-invariant line bundle on $C$.
\end{proof}

Next we construct $G$-invariant $2$-torsion line bundle $\eta_D$ on $D$ having the same Schur multiplier as that of  $\kappa_C$. Let $x,y,z \in G$ be the stabilizer elements of the $G$-action on
$D$ and let $F^x$ be a set-theoretic fiber which is a half of the
ramification points of the corresponding $(\mathbb{Z}/2)x$-quotient
of $D$. Then $G$-action induces an $(\mathbb{Z}/2)y \oplus
(\mathbb{Z}/2)z$-action on $F^x$ which is free. Then we can
decompose $F^x=F^x_1 \sqcup F^x_2$ such that one
$(\mathbb{Z}/2)y$-action preserves each $F^x_1, F^x_2$ and
$(\mathbb{Z}/2)z$-action exchanges them. Let us denote
$\eta^{y,z}_D=\mathcal{O}(F^x_1 - F^x_2)$. Note that $\eta^{y,z}$ is not trivial by Lemma 3.1.

\begin{lem}
$\eta^{y,z}_D$ is a $G$-invariant $2$-torsion line bundle on $D$.
\end{lem}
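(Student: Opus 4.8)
The plan is to exhibit $\eta^{y,z}_D$ as the pullback, along the double cover $\pi_y\colon D\to B_y:=D/\langle y\rangle$, of an explicit $2$-torsion line bundle on the elliptic curve $B_y$, and then to verify $G$-invariance on a generating set of $G$. First I would record the geometry of $\pi_y$. From the ramification data recalled above, $y$ occurs as a stabilizer over exactly two of the six branch points of $\pi_D$, so $\mathrm{Fix}_D(y)$ is the union of those two fibers and hence consists of $8$ points; Riemann--Hurwitz then gives $g(B_y)=1$. On the other hand every point of $F^x$ has stabilizer exactly $\langle x\rangle$ in $G$, so it is not fixed by $y$, and therefore $\pi_y$ is étale along $F^x$. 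Writing $q_i:=\pi_y(F^x_i)$ for $i=1,2$, each $F^x_i$ is a single free $\langle y\rangle$-orbit of two points, so $\pi_y^{-1}(q_i)=F^x_i$ and $\pi_y^*q_i=F^x_i$ as divisors; hence $\eta^{y,z}_D=\mathcal{O}_D(F^x_1-F^x_2)=\pi_y^*\mathcal{O}_{B_y}(q_1-q_2)$.

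Next I would show $\mathcal{O}_{B_y}(q_1-q_2)$ is $2$-torsion. Let $\bar x$ denote the image of $x$ in $G/\langle y\rangle$; it acts on $B_y$ as an involution, nontrivial since $x\notin\langle y\rangle$, and it fixes $q_1$ and $q_2$ because $x$ fixes $F^x$ pointwise. An involution of a curve of genus $1$ with at least one fixed point has, by Riemann--Hurwitz, exactly four fixed points and rational quotient, so $\pi_{\bar x}\colon B_y\to B_y/\langle\bar x\rangle\cong\mathbb{P}^1$ is a double cover ramified at $q_1$ and $q_2$ (among others). Consequently $2q_1\sim\pi_{\bar x}^*\pi_{\bar x}(q_1)$ and $2q_2\sim\pi_{\bar x}^*\pi_{\bar x}(q_2)$ are pullbacks of points of $\mathbb{P}^1$, hence linearly equivalent, so $\mathcal{O}_{B_y}(2q_1-2q_2)\cong\mathcal{O}_{B_y}$. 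Pulling back along $\pi_y$ gives $(\eta^{y,z}_D)^{\otimes2}\cong\mathcal{O}_D$.

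For $G$-invariance it suffices to check $\sigma^*\eta^{y,z}_D\cong\eta^{y,z}_D$ for $\sigma$ ranging over the generating set $\{x,y,z\}$ of $G=(\mathbb{Z}/2)^3$, since the condition $\sigma^*\eta^{y,z}_D\cong\eta^{y,z}_D$ is multiplicative in $\sigma$. The automorphisms $x$ and $y$ each carry the pair $\{F^x_1,F^x_2\}$ to itself preserving each member ($x$ fixes $F^x$ pointwise, $y$ preserves each $F^x_i$ by construction), so $x^*\eta^{y,z}_D\cong y^*\eta^{y,z}_D\cong\eta^{y,z}_D$; the involution $z$ interchanges $F^x_1$ and $F^x_2$, so $z^*\eta^{y,z}_D\cong\mathcal{O}_D(F^x_2-F^x_1)=(\eta^{y,z}_D)^{-1}$, which is isomorphic to $\eta^{y,z}_D$ by the previous paragraph. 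This completes the proof.

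I expect the only genuine point to be the identification of $\bar x$ with the elliptic involution of $B_y$ — that is, that $B_y$ has genus $1$ and that $q_1,q_2$ lie on the ramification divisor of a degree-two map $B_y\to\mathbb{P}^1$; this rests squarely on the Bauer--Catanese ramification data for the $G$-action on $D$, which must be fed in correctly (in particular that the two branch points of $\pi_D$ with stabilizer $\langle y\rangle$ produce $8$ fixed points, disjoint from $F^x$). Everything else is formal manipulation with divisor classes and pullbacks along the quotient maps.
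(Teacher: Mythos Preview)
Your proof is correct and follows essentially the same idea as the paper's: both arguments establish $2F^x_1\sim 2F^x_2$ by recognizing these divisors as pullbacks of points under the degree-$4$ quotient $D\to D/\langle x,y\rangle\cong\mathbb{P}^1$, and both handle $G$-invariance identically via $x,y$ preserving each $F^x_i$ and $z$ swapping them. The only difference is packaging: the paper quotients by $\langle x,y\rangle$ in one step and simply asserts the target is $\mathbb{P}^1$ and that $2F^x_i\sim(\pi^{x,y}_D)^*\mathrm{pt}$, whereas you factor this as $D\xrightarrow{\pi_y}B_y\xrightarrow{\pi_{\bar x}}\mathbb{P}^1$ and supply the Riemann--Hurwitz bookkeeping (that $B_y$ is elliptic, that $\bar x$ is the hyperelliptic involution fixing $q_1,q_2$) which justifies those assertions. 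Your route is more self-contained; the paper's is terser but relies on the reader filling in exactly the ramification analysis you wrote out.
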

\begin{proof}
Consider the $(\mathbb{Z}/2)x \oplus (\mathbb{Z}/2)y$-action on $D$, and let $\pi^{x,y}_D: D \to \mathbb{P}^1$ be the quotient morphism. $2F^x_1 \sim 2F^x_2$ follows since $2F^x_1 \sim 2F^x_2 \sim (\pi^{x,y}_D)^*{pt}$. Therefore $\eta^{y,z}_D$ is a $2$-torsion line bundle. Since $(\mathbb{Z}/2)x \oplus (\mathbb{Z}/2)y$-action fixes $\eta^{y,z}_D$ and $z^*(\eta^{y,z}_D) \cong (\eta^{y,z}_D)^{\otimes{-1}} \cong \eta^{y,z}_D$, $\eta^{y,z}_D$ is a $G$-invariant line bundle on $D$.
\end{proof}

Therefore we get three $2$-torsion line bundles $\eta^{e_1+e_3,e_1+e_2+e_3}_D,\eta^{e_1+e_2+e_3,e_1+e_2}_D,\eta^{e_1+e_3,e_1+e_2}_D$ on $D$, and let  $\eta_D=\eta^{e_1+e_3,e_1+e_2+e_3}_D \otimes \eta^{e_1+e_2+e_3,e_1+e_2}_D \otimes \eta^{e_1+e_3,e_1+e_2}_D$.

\begin{prop}
Let $\kappa_C$ be the $G$-invariant theta characteristics
constructed above, and let $\eta_C$ be the $G$-invariant torsion
line bundles constructed above. Then $
\alpha(\kappa_C)\alpha(\eta_D) = 0 \in H^2(G,\mathbb{C}^*). $
\end{prop}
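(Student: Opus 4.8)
The plan is to compute both Schur multiplier classes $[\alpha(\kappa_C)]$ and $[\alpha(\eta_D)]$ in $H^2(G,\mathbb{C}^*)$ explicitly and to check that their product is a coboundary. Recall that for abelian $G$ a class in $H^2(G,\mathbb{C}^*)$ is determined by the $\{\pm1\}$-valued alternating form $c_\alpha(g,h)=\alpha(g,h)\alpha(h,g)^{-1}$ (as $\mathbb{C}^*$ is divisible), hence, for $G=(\mathbb{Z}/2)^3$, by the three numbers $c_\alpha(e_1,e_2)$, $c_\alpha(e_1,e_3)$, $c_\alpha(e_2,e_3)$; and since $H^2(G,\mathbb{C}^*)$ is $2$-torsion it suffices to show $[\alpha(\kappa_C)]=[\alpha(\eta_D)]$, i.e.\ that these two forms agree.

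First I would compute $[\alpha(\kappa_C)]$. Since $\kappa_C=(\pi_C^1)^*\mathcal{O}(-1)\otimes\mathcal{O}_C(E_1)$ and $\mathcal{O}_C(E_1)$ is $G$-equivariant, the tensor-product proposition for $\alpha$-sheaves gives $[\alpha(\kappa_C)]=[\alpha((\pi_C^1)^*\mathcal{O}(-1))]$. The residual action of $N=\langle e_2,e_3\rangle$ on $B=C/\langle e_1\rangle\cong\mathbb{P}^1$ is the faithful Klein-four action, so the explicit computation of subsection 4.5 (applied to $\mathcal{O}(-1)=\mathcal{O}(1)^{-1}$) yields the cocycle $\beta\in Z^2(N,\mathbb{C}^*)$ of $\mathcal{O}(-1)$ with $c_\beta(e_2,e_3)=-1$, and the Lemma on pullback of invariant line bundles then gives $\alpha(\kappa_C)|_{N\times N}=\beta$ together with $\alpha(\kappa_C)(e_1,\,\cdot\,)=\alpha(\kappa_C)(\,\cdot\,,e_1)=1$. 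Hence $c_{\kappa_C}(e_2,e_3)=-1$ and $c_{\kappa_C}(e_1,e_2)=c_{\kappa_C}(e_1,e_3)=1$.

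Next, and this is where the real work lies, I would compute $[\alpha(\eta_D)]$. Writing $a=e_1+e_2$, $b=e_1+e_3$, $c=e_1+e_2+e_3$, we have $\eta_D=\eta_D^{b,c}\otimes\eta_D^{c,a}\otimes\eta_D^{b,a}$, so by the tensor-product proposition $[\alpha(\eta_D)]=[\alpha(\eta_D^{b,c})]+[\alpha(\eta_D^{c,a})]+[\alpha(\eta_D^{b,a})]$. For each factor $\eta_D^{y,z}=\mathcal{O}(F_1^x-F_2^x)$ the stabilizer $x$ fixes $F^x$ pointwise, $\langle x,y\rangle$ preserves the divisor $F_1^x-F_2^x$, and $z$ interchanges $F_1^x$ and $F_2^x$; a local computation in affine charts around the points of $F^x$, parallel to subsection 4.5 (the swap by $z$ producing exactly the sign that obstructs linearization, just as $e_2$ did on $\mathbb{P}^1$), identifies $\alpha(\eta_D^{y,z})$ with the pullback to $G$ of the nondegenerate alternating form on $G/\langle x\rangle\cong(\mathbb{Z}/2)^2$, i.e.\ $c_{\eta_D^{y,z}}(g,h)=-1$ precisely when the images of $g$ and $h$ in $G/\langle x\rangle$ are distinct and nonzero. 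Specializing $x=a,b,c$ and multiplying the three resulting forms gives $c_{\eta_D}(e_1,e_2)=1$, $c_{\eta_D}(e_1,e_3)=1$, $c_{\eta_D}(e_2,e_3)=-1$, which agrees with $c_{\kappa_C}$. Therefore $[\alpha(\kappa_C)]-[\alpha(\eta_D)]$ has trivial alternating form, and being an element of the $2$-torsion group $H^2(G,\mathbb{C}^*)$ it vanishes, i.e.\ $\alpha(\kappa_C)\alpha(\eta_D)=0$.

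The main obstacle is this middle step. For $\kappa_C$ everything reduces, via an equivariant twist, to a line bundle pulled back from $\mathbb{P}^1$, to which the pullback lemma and the $\mathbb{P}^1$ example apply directly. Each $\eta_D^{y,z}$, by contrast, is the class of a difference of pieces of a ramification fibre that are genuinely permuted by $G$, so one must choose explicit trivializations on a chart cover of $F^x$ and track the transition isomorphisms $\theta_g$ by hand — the same bookkeeping as in subsection 4.5, but now carried out for a divisor supported at several points and repeated for the three choices of $x$, followed by assembling the three normal cocycles into a single one. If only the statement of the proposition is wanted, the alternating-form computation above bypasses the full cocycle tables; those tables are, however, what is needed afterwards to write down the actual $G$-equivariant structure on $\kappa_C\boxtimes\eta_D$ that descends to a line bundle on $S$.
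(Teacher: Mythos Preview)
Your approach is correct and genuinely different from the paper's. The paper computes the full $8\times 8$ cocycle tables for $\alpha((\pi_C^1)^*\mathcal{O}(-1))$ and for each of the three factors $\alpha(\eta_D^{y,z})$, multiplies them together, and then exhibits an explicit $1$-cochain $t:G\to\mathbb{C}^*$ with $\alpha(\kappa_C)\alpha(\eta_D)(x,y)=t(x)t(y)t(xy)^{-1}$. You instead invoke the standard isomorphism $H^2(G,\mathbb{C}^*)\cong\mathrm{Hom}(\Lambda^2G,\mathbb{C}^*)$ for abelian $G$, so that each class is determined by its commutator pairing $c_\alpha(g,h)=\alpha(g,h)\alpha(h,g)^{-1}$, and reduce everything to checking three signs. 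This is considerably cleaner: your values $c_{\kappa_C}=(1,1,-1)$ and $c_{\eta_D}=(1,1,-1)$ on $(e_1,e_2),(e_1,e_3),(e_2,e_3)$ do agree with what one reads off from the paper's tables, and the conclusion follows since the group is $2$-torsion.

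The trade-off is exactly what you identify at the end. Your argument proves the proposition but does not produce the explicit $1$-cochain $t$; the paper's brute-force computation does, and that $t$ is what actually specifies the $G$-linearization on $\kappa_C\boxtimes\eta_D$ used to build the line bundle on $S$. The one place your write-up remains a sketch is the identification of $c_{\eta_D^{y,z}}$ with the pullback of the nondegenerate alternating form on $G/\langle x\rangle$: you describe the local computation but do not carry it out. The claim is correct (it matches the paper's tables in all three cases), and your outline of how to verify it---tracking $\theta_g$ on charts around the points of $F^x$, with the swap by $z$ supplying the nontrivial sign---is the right one; but a reader would want at least one of the three cases written out, since unlike the $\kappa_C$ side this does not reduce directly to the $\mathbb{P}^1$ example via the pullback lemma.
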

\begin{proof}
Using the above basic example we can compute the cocycles of
$\alpha(\kappa_C), \alpha(\eta_D) \in H^2(G,\mathbb{C}^*).$ Since
$\mathcal{O}_C(E_1)$ is a $G$-equivariant line bundle, it suffice to
compute $\alpha((\pi_C^1)^*\mathcal{O}(-1)).$ From the calculation of basic example we get the following table for
$\alpha((\pi_C^1)^*\mathcal{O}(-1)).$
\begin{center}
\begin{tabular}{|c|c|c|c|c|c|c|c|c|} \hline
$\alpha$ & 0 & $e_1$ & $e_2$ & $e_1+e_2$ & $e_3$ & $e_1+e_3$ & $e_2+e_3$ & $e_1+e_2+e_3$\\
\hline
$0$ & 1 & 1 & 1 & 1 & 1 & 1 & 1 & 1 \\
\hline
$e_1$ & 1 & 1 & 1 & 1 & 1 & 1 & 1 & 1 \\
\hline
$e_2$ & 1 & 1 & 1 & 1 & 1 & 1 & 1 & 1 \\
\hline
$e_1+e_2$ & 1 & 1 & 1 & 1 & 1 & 1 & 1 & 1 \\
\hline
$e_3$ & 1 & 1 & -1 & -1 & 1 & 1 & -1 & -1 \\
\hline
$e_1+e_3$ & 1 & 1 & -1 & -1 & 1 & 1 & -1 & -1 \\
\hline
$e_2+e_3$ & 1 & 1 & -1 & -1 & 1 & 1 & -1 & -1 \\
\hline
$e_1+e_2+e_3$ & 1 & 1 & -1 & -1 & 1 & 1 & -1 & -1 \\
\hline
\end{tabular}
\end{center}

We can also compute $\alpha(\eta^{e_1+e_3,e_1+e_2+e_3}_D)$ as follows.

\begin{center}
\begin{tabular}{|c|c|c|c|c|c|c|c|c|} \hline
$\alpha$ & 0 & $e_1$ & $e_2$ & $e_1+e_2$ & $e_3$ & $e_1+e_3$ & $e_2+e_3$ & $e_1+e_2+e_3$\\
\hline
$0$ & 1 & 1 & 1 & 1 & 1 & 1 & 1 & 1 \\
\hline
$e_1$ & 1 & -1 & -1 & 1 & 1 & -1 & -1 & 1 \\
\hline
$e_2$ & 1 & -1 & -1 & 1 & 1 & -1 & -1 & 1 \\
\hline
$e_1+e_2$ & 1 & 1 & 1 & 1 & 1 & 1 & 1 & 1 \\
\hline
$e_3$ & 1 & -1 & -1 & 1 & 1 & -1 & -1 & 1 \\
\hline
$e_1+e_3$ & 1 & 1 & 1 & 1 & 1 & 1 & 1 & 1 \\
\hline
$e_2+e_3$ & 1 & 1 & 1 & 1 & 1 & 1 & 1 & 1 \\
\hline
$e_1+e_2+e_3$ & 1 & -1 & -1 & 1 & 1 & -1 & -1 & 1 \\
\hline
\end{tabular}
\end{center}

We can also compute $\alpha(\eta^{e_1+e_2+e_3,e_1+e_2}_D)$ as follows.

\begin{center}
\begin{tabular}{|c|c|c|c|c|c|c|c|c|} \hline
$\alpha$ & 0 & $e_1$ & $e_2$ & $e_1+e_2$ & $e_3$ & $e_1+e_3$ & $e_2+e_3$ & $e_1+e_2+e_3$\\
\hline
$0$ & 1 & 1 & 1 & 1 & 1 & 1 & 1 & 1 \\
\hline
$e_1$ & 1 & -1 & -1 & 1 & -1 & 1 & 1 & -1 \\
\hline
$e_2$ & 1 & 1 & 1 & 1 & 1 & 1 & 1 & 1 \\
\hline
$e_1+e_2$ & 1 & -1 & -1 & 1 & -1 & 1 & 1 & -1 \\
\hline
$e_3$ & 1 & -1 & -1 & 1 & -1 & 1 & 1 & -1 \\
\hline
$e_1+e_3$ & 1 & 1 & 1 & 1 & 1 & 1 & 1 & 1 \\
\hline
$e_2+e_3$ & 1 & -1 & -1 & 1 & -1 & 1 & 1 & -1 \\
\hline
$e_1+e_2+e_3$ & 1 & 1 & 1 & 1 & 1 & 1 & 1 & 1 \\
\hline
\end{tabular}
\end{center}

We can also compute $\alpha(\eta^{e_1+e_3,e_1+e_2}_D)$ as follows.

\begin{center}
\begin{tabular}{|c|c|c|c|c|c|c|c|c|} \hline
$\alpha$ & 0 & $e_1$ & $e_2$ & $e_1+e_2$ & $e_3$ & $e_1+e_3$ & $e_2+e_3$ & $e_1+e_2+e_3$\\
\hline
$0$ & 1 & 1 & 1 & 1 & 1 & 1 & 1 & 1 \\
\hline
$e_1$ & 1 & -1 & -1 & 1 & 1 & -1 & -1 & 1 \\
\hline
$e_2$ & 1 & 1 & 1 & 1 & 1 & 1 & 1 & 1 \\
\hline
$e_1+e_2$ & 1 & -1 & -1 & 1 & 1 & -1 & -1 & 1 \\
\hline
$e_3$ & 1 & -1 & -1 & 1 & 1 & -1 & -1 & 1 \\
\hline
$e_1+e_3$ & 1 & 1 & 1 & 1 & 1 & 1 & 1 & 1 \\
\hline
$e_2+e_3$ & 1 & -1 & -1 & 1 & 1 & -1 & -1 & 1 \\
\hline
$e_1+e_2+e_3$ & 1 & 1 & 1 & 1 & 1 & 1 & 1 & 1 \\
\hline
\end{tabular}
\end{center}

Then $\alpha(\kappa_C) \alpha(\eta_D)$ becomes as follows.

\begin{center}
\begin{tabular}{|c|c|c|c|c|c|c|c|c|} \hline
$\alpha$ & 0 & $e_1$ & $e_2$ & $e_1+e_2$ & $e_3$ & $e_1+e_3$ & $e_2+e_3$ & $e_1+e_2+e_3$\\
\hline
$0$ & 1 & 1 & 1 & 1 & 1 & 1 & 1 & 1 \\
\hline
$e_1$ & 1 & -1 & -1 & 1 & -1 & 1 & 1 & -1 \\
\hline
$e_2$ & 1 & -1 & -1 & 1 & 1 & -1 & -1 & 1 \\
\hline
$e_1+e_2$ & 1 & 1 & 1 & 1 & -1 & -1 & -1 & -1 \\
\hline
$e_3$ & 1 & -1 & 1 & -1 & -1 & 1 & -1 & 1 \\
\hline
$e_1+e_3$ & 1 & 1 & -1 & -1 & 1 & 1 & -1 & -1 \\
\hline
$e_2+e_3$ & 1 & 1 & -1 & -1 & -1 & -1 & 1 & 1 \\
\hline
$e_1+e_2+e_3$ & 1 & -1 & 1 & -1 & 1 & -1 & 1 & -1 \\
\hline
\end{tabular}
\end{center}

Finally we can show that the above cocycle becomes a coboundary by
giving
$t(0)=1,t(e_1)=\sqrt{-1},t(e_2)=\sqrt{-1},t(e_3)=\sqrt{-1},t(e_1+e_2)=1,t(e_1+e_3)=1,t(e_2+e_3)=-1,t(e_1+e_2+e_3)=-\sqrt{-1},$
and check that $\alpha(x,y)=t(x)t(y)t(xy)^{-1}, \forall x,y\in G$.
\end{proof}

\begin{rmk}
It follows that $ \kappa_C \boxtimes \eta_D $ is a $G$-equivariant line bundle on $ C \times D $.
\end{rmk}

\begin{lem} The 6 effective $G$ invariant divisors of degree 4 divide into 3 divisor classes $F_1 \sim F_4$, $F_2 \sim F_5$, $F_3 \sim F_6$ which are not linearly equivalent to each other.
\end{lem}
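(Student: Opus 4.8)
The plan is to treat the three pairs one at a time, in each case passing to a carefully chosen intermediate elliptic quotient of $D$, and to establish the three non-equivalences with Beauville's classification of $\sigma$-invariant line bundles (Lemma~3.1). First a preliminary observation, which also explains the phrase ``the $6$ effective $G$-invariant divisors of degree $4$''. Since $G=(\mathbb Z/2)^3$ is not cyclic, every point of $D$ has cyclic stabilizer of order $1$ or $2$, so every $G$-orbit on $D$ has size $8$, except for the six orbits lying over the branch points of $\pi_D$, which have size $4$; these six orbits are exactly $F_1,\dots,F_6$. Hence $F_1,\dots,F_6$ are precisely the effective $G$-invariant divisors of degree $4$. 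Moreover $\pi_D^*(b_i)=2F_i$, so $2F_1\sim\cdots\sim 2F_6\sim\pi_D^*\mathcal O_{\mathbb P^1}(1)$; in particular each $F_i-F_j$ is a $2$-torsion class in $Pic(D)$, and the two assertions of the lemma are that this class vanishes within each pair $\{F_1,F_4\},\{F_2,F_5\},\{F_3,F_6\}$ and is nonzero for classes drawn from different pairs.

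For the equivalences, I take $\{F_1,F_4\}$ (common stabilizer $g=e_1+e_2$) as a model. Choose an involution $h\in G$ whose quotient is elliptic but with $h\neq g$, say $h=e_1+e_3$; its fixed locus on $D$ is $F_2\cup F_5$ ($8$ points), so $E:=D/\langle h\rangle$ has genus $1$, with quotient map $q:D\to E$. Because $h$ is not the stabilizer of the points of $F_1$ or $F_4$, it acts freely on each of these sets, so $q$ maps $F_1$ and $F_4$ two-to-one and unramifiedly onto disjoint $2$-point sets $G_1:=q(F_1)$ and $G_4:=q(F_4)$, and $q^*G_1=F_1$, $q^*G_4=F_4$ as divisors. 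The element $g$ induces a nontrivial involution $\iota$ of $E$ (nontrivial since $g\notin\langle h\rangle$), and a point of $E$ is $\iota$-fixed exactly when the corresponding $G$-orbit contains a point whose stabilizer contains $g$ or $g+h=e_2+e_3$; as $e_2+e_3$ is never a point stabilizer and $g$ is the stabilizer precisely along $F_1\cup F_4$, this gives $\mathrm{Fix}(\iota)=G_1\sqcup G_4$, a set of $4$ points. Now an involution of a genus-$1$ curve that has a fixed point becomes, after choosing one fixed point as origin, the map $[-1]$ on the resulting elliptic curve, with fixed locus the $2$-torsion $E[2]$. Hence $G_1$ and $G_4$ are complementary $2$-element subsets of $E[2]$; writing $E[2]=\{u_1,u_2,u_3,u_4\}$ with $G_1=\{u_1,u_2\}$ and $G_4=\{u_3,u_4\}$, we get $u_1+u_2=u_3+u_4$ since $u_1+u_2+u_3+u_4=0$ (the sum of the elements of the group $E[2]$). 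Thus $G_1$ and $G_4$ are effective degree-$2$ divisors on an elliptic curve with equal image under Abel--Jacobi, so $G_1\sim G_4$, and pulling back by $q$ yields $F_1\sim F_4$. The pairs $\{F_2,F_5\}$ and $\{F_3,F_6\}$ are handled identically, using the elliptic curve $D/\langle e_1+e_2\rangle$ together with the involutions induced by $e_1+e_3$ and by $e_1+e_2+e_3$ respectively.

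For the non-equivalences, suppose $F_i$ and $F_j$ lie in different pairs, say $F_1$ and $F_2$. Let $\sigma=e_1+e_3$ be the stabilizer along $F_2$, put $B=D/\langle\sigma\rangle$ with quotient map $\pi_B:D\to B$, and let $R=F_2\cup F_5$ be the ramification set of $\pi_B$, so $|R|=8$. On one hand, $\sigma$ acts freely on the $G$-invariant set $F_1$, so $\mathcal O_D(F_1)=\pi_B^*\mathcal O_B(\pi_B(F_1))\in\pi_B^*Pic(B)$. On the other hand $F_2\subset R$ with $0<|F_2|=4<8=|R|$, so the indicator vector of $F_2$ is neither $0$ nor $(1,\dots,1)$ in $(\mathbb Z/2)^R$; by Lemma~3.1 its image $\mathcal O_D(F_2)$ is therefore nonzero in $Pic(D)^{\sigma}/\pi_B^*Pic(B)$, i.e.\ $\mathcal O_D(F_2)\notin\pi_B^*Pic(B)$. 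Hence $F_1\not\sim F_2$. The relations $F_1\not\sim F_3$ and $F_2\not\sim F_3$ follow the same way with $\sigma=e_1+e_2+e_3$ (fixed locus $F_3\cup F_6$), using that $e_1+e_2+e_3$ acts freely on both $F_1$ and $F_2$. Combined with the previous paragraph this gives the three distinct classes $\{F_1,F_4\},\{F_2,F_5\},\{F_3,F_6\}$.

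The routine parts are the orbit-counting and the repeated checks of which elements of $G$ are or are not point stabilizers, all immediate from the explicit stabilizer data. The one genuinely delicate step is promoting $2F_1\sim 2F_4$ to $F_1\sim F_4$: the obvious pullback/pushforward identities only ever reproduce the relation $2F_i\sim 2F_j$, so one must pass to an intermediate curve on which $F_1$ and $F_4$ pull back with multiplicity one, which forces the choice $h\neq g$ (on $D/\langle g\rangle$ itself they pull back with multiplicity two and the argument stalls). The substantive input is then the elementary fact that the four fixed points of an involution of an elliptic curve form a coset of the $2$-torsion, so that any splitting of them into $2+2$ has equal partial sums.
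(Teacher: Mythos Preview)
Your proof is correct. For the non-equivalences you argue exactly as the paper does, via Beauville's Lemma~3.1 applied to the quotient by the involution whose fixed locus contains one (but not both) of the divisors in question.

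For the equivalences your route is genuinely different. You pass to the elliptic quotient $E=D/\langle h\rangle$ by a \emph{single} involution $h\neq g$, identify the images of $F_1$ and $F_4$ as complementary $2$-point subsets of $E[2]$ for the induced involution on $E$, and use that complementary pairs in $E[2]$ have equal sum. The paper instead takes the quotient by the order-$4$ subgroup $H=\langle e_1+e_3,\,e_1+e_2+e_3\rangle$ generated by the two \emph{other} stabilizer elements. Since $H$ meets the stabilizer $\langle e_1+e_2\rangle$ trivially, $H$ acts freely and transitively on each of $F_1$ and $F_4$, so both are pullbacks of single points from $D/H\cong\mathbb{P}^1$ and hence linearly equivalent; the other pairs follow by permuting the roles. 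The paper's argument is a line; yours is longer but has the virtue of explaining \emph{why} it is precisely the pairs with common stabilizer that collapse (their images sit inside a $2$-torsion coset and complementary halves of a $2$-torsion coset always have equal sum), and of reusing the same intermediate curve $D/\langle e_1+e_3\rangle$ for both parts of the lemma. Your additional preliminary remark, that $F_1,\dots,F_6$ exhaust the effective $G$-invariant degree-$4$ divisors, is also worth keeping: the paper uses this fact in the next lemma without stating it.
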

\begin{proof}
Consider $(\mathbb{Z}/2)(e_1+e_3) \oplus (\mathbb{Z}/2)(e_1+e_2+e_3)$-action on $D$ and let $\pi^{e_1+e_3,e_1+e_2+e_3}:D \to \mathbb{P}^1$ be the quotient map. $F_1 \sim F_4$ follows since they are pullbacks of the point of
$\mathbb{P}^1$ via $\pi^{e_1+e_3,e_1+e_2+e_3}$. Similarly we get $F_2 \sim F_5$, $F_3 \sim F_6$. Consider $(\mathbb{Z}/2)(e_1+e_3)$-action on $D$ and let $\pi^{e_1+e_3}:D \to E$ be the quotient map. Then $F_1$ is a pullback of a line bundle on $E$, and $F_2$ is half of the ramification point of $\pi^{e_1+e_3}$. From the Lemma 3.1 we get $F_1 \nsim F_2$. Similarly we see that $F_1
\nsim F_3$.

\end{proof}

From the above two lemmas, we find that every $G$-invariant effective divisor on $E$ of degree 4 is linearly equivalent to $F_1$ or $F_2$ or $F_3$. With the same notation, we have the following lemma.

\begin{lem}
$$ h^0(D, \mathcal{O}_D(F_1+F_2-F_3)) = 0, $$
$$ h^1(D, \mathcal{O}_D(F_1+F_2-F_3)) = 0, $$
$$ h^0(D, \mathcal{O}_D(-F_1-F_2+F_3)) = 0, $$
$$ h^1(D, \mathcal{O}_D(-F_1-F_2+F_3)) = 8. $$
\end{lem}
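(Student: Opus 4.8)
The divisor $F_1 + F_2 - F_3$ has degree $4 + 4 - 4 = 4$ on the genus $5$ curve $D$. By Riemann--Roch, $h^0 - h^1 = \deg - g + 1 = 4 - 5 + 1 = 0$, so the first and second vanishings are equivalent, and likewise the third and fourth vanishings are equivalent to each other (the line bundle $\mathcal{O}_D(-F_1 - F_2 + F_3)$ has degree $-4$, so $h^0 - h^1 = -4 - 5 + 1 = -8$, giving $h^1 = h^0 + 8$; hence $h^0 = 0 \iff h^1 = 8$). So the entire lemma reduces to two statements: (a) $h^0(D, \mathcal{O}_D(F_1 + F_2 - F_3)) = 0$, and (b) $h^0(D, \mathcal{O}_D(-F_1-F_2+F_3)) = 0$. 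Statement (b) is immediate: a line bundle of negative degree on a curve has no nonzero global sections. So the whole content is statement (a).

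\textbf{Proving the effective vanishing.} I would argue by contradiction: suppose $h^0(D, \mathcal{O}_D(F_1 + F_2 - F_3)) > 0$. Then $\mathcal{O}_D(F_1 + F_2 - F_3)$ is linearly equivalent to an effective divisor of degree $4$. By Lemma~5.7 (the classification of $G$-invariant effective divisors of degree $4$) — but wait, there is no reason an \emph{arbitrary} effective divisor in this class is $G$-invariant, so I should not invoke that directly. Instead, the cleaner route is to note that $F_1 \sim F_4$, $F_2 \sim F_5$, $F_3 \sim F_6$ from Lemma~5.7, and to use the $(\mathbb{Z}/2)(e_1+e_3)$-quotient map $\pi^{e_1+e_3} : D \to E$ onto the elliptic curve $E$ that already appeared in the proof of Lemma~5.7. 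There $F_1$ is the pullback of a point (more precisely of a degree-$2$ divisor class) on $E$, while $F_2$ and $F_3$ are the two halves of the ramification divisor, so $F_2 - F_3$ is a $2$-torsion line bundle on $D$ of the form described in Section~5 — note $2F_2 \sim 2F_3$ since both pull back to the same point class under a suitable $(\mathbb{Z}/2)^2$-quotient, and $F_2 - F_3$ is nontrivial by Lemma~3.1. Thus $\mathcal{O}_D(F_1 + F_2 - F_3) \cong \pi^*(\text{line bundle of degree }2 \text{ on } E) \otimes \tau$ where $\tau = \mathcal{O}_D(F_2 - F_3)$ is a specific nontrivial $2$-torsion bundle. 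Then by Beauville's Proposition~3.3(2) (the projection formula for $\sigma$-invariant bundles applied with $\sigma = e_1 + e_3$), $H^0(D, \mathcal{O}_D(F_1 + F_2 - F_3))$ decomposes as a sum $H^0(E, L_1) \oplus H^0(E, L_2)$ for two degree-$1$ (or degree-$0$) line bundles $L_1, L_2$ on the elliptic curve $E$ that are determined explicitly by the splitting; the claim is that the relevant $L_i$ are nontrivial of degree $0$, hence have no sections, or of degree $1$ but still with the torsion twist forcing vanishing. I would compute the two summands explicitly using the identification $\rho^2 = \mathcal{O}_E(\pi_* R)$ for the double cover $D \to E$, and check that neither summand can be effective.

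\textbf{The main obstacle.} The delicate point is identifying precisely which pair $(L, E')$ of the form $\mathcal{O}_D(F_1+F_2-F_3) \cong \pi^* L(E')$ occurs, and then computing $h^0(E, L)$ and $h^0(E, L \otimes \rho^{-1}(\pi_* E'))$ on the \emph{elliptic} curve $E$ — on an elliptic curve a degree-$1$ line bundle always has a one-dimensional space of sections, so the vanishing must come from both summands having degree $\le 0$ together with nontriviality in the degree-$0$ case. Getting the bookkeeping of the $2$-torsion twists right (using the tables and the structure of the ramification points $F_1^x, F_2^x$ from the constructions of $\eta^{y,z}_D$) is where the real work lies; I expect to need to track the images of $F_1, F_2, F_3$ in $E$ explicitly and to invoke Lemma~3.1 once more to rule out the trivial case. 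An alternative, possibly shorter, approach: use a different intermediate quotient (for instance the bielliptic or hyperelliptic $\pi_D^1 : D \to \mathbb{P}^1$) under which the arithmetic of $F_1 + F_2 - F_3$ becomes a statement about a line bundle of small degree on $\mathbb{P}^1$, where effectivity is trivially decidable — I would try both and keep whichever gives the cleanest splitting. Once statement (a) is established, Riemann--Roch closes out all four displayed equalities.
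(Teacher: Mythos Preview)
Your Riemann--Roch reductions and the negative-degree vanishing are correct; the whole content is indeed $h^0(D, \mathcal{O}_D(F_1+F_2-F_3)) = 0$. But you abandon the right argument too quickly. When you write ``there is no reason an arbitrary effective divisor in this class is $G$-invariant,'' you are missing the eigensection trick that the paper uses. The $F_i$ are $G$-invariant \emph{as divisors} (each $F_i$ is a reduced fiber of the $G$-quotient $\pi_D : D \to \mathbb{P}^1$), so $\mathcal{O}_D(F_1+F_2-F_3)$ carries a canonical $G$-linearization, and $G$ acts on $H^0(D, \mathcal{O}_D(F_1+F_2-F_3))$. Since $G$ is abelian, a nonzero $H^0$ contains a $G$-eigensection $f$; the zero divisor $F_1 + F_2 - F_3 + (f)$ is then an effective $G$-invariant divisor of degree $4$. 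Any such divisor is a single $G$-orbit of size $4$, hence equals one of $F_1, \ldots, F_6$, and by the preceding lemma is linearly equivalent to one of $F_1, F_2, F_3$. This forces $F_2 \sim F_3$, or $F_1 \sim F_3$, or $F_1 + F_2 \sim 2F_3 \sim 2F_1$ (so $F_1 \sim F_2$), each contradicting that same lemma.

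Your alternative route through Beauville's decomposition for a double cover $D \to E$ could in principle succeed, but you have not carried it out: you note yourself that on an elliptic curve a degree-$1$ bundle always has a section, so you would need to verify that both summands have degree $\le 0$ and are nontrivial in the degree-$0$ case, and ``try both quotients and keep whichever is cleanest'' is a plan, not a proof. The eigensection argument avoids all of this bookkeeping.
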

\begin{proof}
From the Riemann-Roch formula we find that $$ h^0(D, \mathcal{O}_D(F_1+F_2-F_3)) - h^1(D, \mathcal{O}_
D(F_1+F_2-F_3)) = 1+4-5 = 0. $$
Therefore it suffices to show that $h^0(D, \mathcal{O}_D(F_1+F_2-F_3)) = 0.$ We know that $F_1, F_2, F_3$ are $G$-invariant divisors on $D$ and hence there is a $G$-action on $H^0(D,\mathcal{O}_D(F_1+F_2-F_3))$. If $ h^0(D, \mathcal{O}_D(F_1+F_2-F_3)) \neq 0, $ then there is a $G$-eigensection $f \in H^0(D, \mathcal{O}_D(F_1+F_2-F_3))$, and $F_1+F_2-F_3+(f)$ should be a $G$-invariant effective divisor of degree 4. Every $G$-invariant effective divisor of degree 4 on $D$ is linearly equivalent to $F_1$ or $F_2$ or $F_3$ by the above lemma. It follows that $F_1+F_2-F_3 \sim F_1$ or $F_1+F_2-F_3 \sim F_2$ or $F_1+F_2-F_3 \sim F_3$. Then $ F_2-F_3 \sim 0 $ or $ F_1-F_3 \sim 0 $ or $ F_1 + F_2 \sim 2F_3 \sim 2F_1$ which contradicts the assumption that $F_1$, $F_2$ and $F_3$ are not linearly equivalent to each other.

From the Riemann-Roch theorem we get $$ h^0(D,
\mathcal{O}_D(-F_1-F_2+F_3)) - h^1(D, \mathcal{O}_D(-F_1-F_2+F_3)) =
1-4-5 = -8. $$ and $$ h^0(D, \mathcal{O}_D(-F_1-F_2+F_3)) = 0 $$
because the degree of $\mathcal{O}_D(-F_1-F_2+F_3)$ is negative.
\end{proof}

\begin{rmk}
Because the $G$-action on $C \times D$ is free we have $D^b(S)
\simeq D^b_G(C \times D)$ and every $G$-equivariant line bundle on
$C \times D$ corresponds to a line bundle on $S$. Therefore we
regards a $G$-equivariant line bundle on $C \times D$ as a line
bundle on $S$
\end{rmk}

\begin{thm} Let $S=(C \times D)/G$ be a surface isogenous to a higher product with $p_g=q=0$, $G=(\mathbb{Z}/2)^3$. For any choice of four characters $\chi_1, \chi_2, \chi_3, \chi_4$, the following sequence
$$ \mathcal{O}_{C \times D}(\chi_1), q^*\mathcal{O}_D(-F_1-F_2+F_3)(\chi_2), \kappa_C^{-1} \boxtimes \eta_D(\chi_3), \kappa_C^{-1} \boxtimes (\eta_D \otimes \mathcal{O}_D(-F_1-F_2+F_3))(\chi_4) $$ is a exceptional sequence of line bundles of maximal length in $D^b(S)$.
\end{thm}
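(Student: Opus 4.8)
The plan is to carry out all computations inside the equivalence $D^b(S)\simeq D^b_G(C\times D)$, which is available because $G$ acts freely on $C\times D$; under it a $G$-equivariant line bundle on $C\times D$ is the same as a line bundle on $S$, and $\mathrm{Hom}_{D^b(S)}(E,E')$ is the $G$-invariant part of $\mathrm{Hom}_{D^b(C\times D)}$ of the corresponding objects. The first step is to check that the four objects in the statement really are line bundles on $S$, i.e. that the underlying line bundles on $C\times D$ — write $\mathcal{E}_1=\mathcal{O}_{C\times D}$, $\mathcal{E}_2=q^*\mathcal{O}_D(-F_1-F_2+F_3)$, $\mathcal{E}_3=\kappa_C^{-1}\boxtimes\eta_D$, $\mathcal{E}_4=\kappa_C^{-1}\boxtimes(\eta_D\otimes\mathcal{O}_D(-F_1-F_2+F_3))$ — admit $G$-equivariant structures. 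For $\mathcal{E}_1$ this is clear; for $\mathcal{E}_2$ it holds because $F_1,F_2,F_3$ are $G$-invariant divisors, so $\mathcal{O}_D(-F_1-F_2+F_3)$ is $G$-equivariant and remains so after $q^*$. For $\mathcal{E}_3$ I would invoke the Proposition together with the Remark after it: $\kappa_C\boxtimes\eta_D$ is $G$-equivariant because its Schur multiplier is $\alpha(\kappa_C)\alpha(\eta_D)=0$, and since $\eta_D$ is $2$-torsion, $\mathcal{E}_3\cong(\kappa_C\boxtimes\eta_D)^{-1}$ is equivariant. Finally $\mathcal{E}_4\cong\mathcal{E}_3\otimes q^*\mathcal{O}_D(-F_1-F_2+F_3)$ is a tensor product of equivariant bundles. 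Twisting by a character $\chi_i$ only changes the equivariant structure, not the underlying bundle, so all four are line bundles on $S$ for every choice of the $\chi_i$. (The role of $\eta_D$ is precisely this: $\kappa_C^{-1}$ is only $G$-invariant, not $G$-equivariant, and $\eta_D$ is chosen to kill its Schur multiplier.)

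Next, each $E_i$ is an exceptional object, since it is a line bundle on a surface with $p_g=q=0$ and hence $\mathrm{RHom}_S(E_i,E_i)=R\Gamma(S,\mathcal{O}_S)$ is $\mathbb{C}$ concentrated in degree $0$, as recalled in the text.

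For the semiorthogonality I would prove that $\mathrm{Ext}^k_{C\times D}(\mathcal{E}_i,\mathcal{E}_j)=H^k\bigl(C\times D,\ \mathcal{E}_i^{-1}\otimes\mathcal{E}_j\bigr)$ vanishes for all $k$ and all $i>j$; since $\mathrm{Hom}_{D^b(S)}(E_i,E_j[k])$ is a subspace of this group (the invariants of a character-twisted $G$-action on it), this yields the vanishing for every choice of characters at once. By the Künneth formula $H^k(C\times D,\mathcal{E}_i^{-1}\otimes\mathcal{E}_j)$ is a sum of terms $H^a(C,-)\otimes H^b(D,-)$, so it suffices to kill one tensor factor in each case. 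When $i\in\{3,4\}$ and $j\in\{1,2\}$ the $C$-factor of $\mathcal{E}_i^{-1}\otimes\mathcal{E}_j$ is $\kappa_C$, which has $h^0(\kappa_C)=h^1(\kappa_C)=0$ by the lemma producing $\kappa_C$, so the whole group vanishes. When $(i,j)=(2,1)$ or $(i,j)=(4,3)$ — using $\eta_D^{\otimes 2}\cong\mathcal{O}_D$ in the second case, so the copies of $\eta_D$ cancel — the $D$-factor is $\mathcal{O}_D(F_1+F_2-F_3)$, which has $h^0=h^1=0$ by the lemma computing $h^\bullet(D,\mathcal{O}_D(F_1+F_2-F_3))$, so again the group vanishes. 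This exhausts all pairs $i>j$, so the sequence is exceptional.

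Finally, the sequence has four terms, and the length of any exceptional sequence in $D^b(S)$ is bounded by $\mathrm{rk}\,K(S)=2+\mathrm{rk}\,\mathrm{Pic}(S)=2+b_2(S)=4$, using $K(S)\cong\mathbb{Z}^2\oplus\mathrm{Pic}(S)$ and $b_2(S)=2$; hence the sequence is of maximal length. I expect no serious obstacle here, since all the genuine content — the $G$-invariant theta characteristic $\kappa_C$ with vanishing cohomology, the $G$-invariant $2$-torsion $\eta_D$ whose Schur multiplier cancels that of $\kappa_C$ so that $\kappa_C^{-1}\boxtimes\eta_D$ descends to $S$, and the two cohomology vanishings on $C$ and $D$ — is already in hand. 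The one point demanding care is the clause "for any choice of characters": it forces one to establish vanishing of the full $\mathrm{Ext}$ groups on $C\times D$, not merely of their $G$-invariant parts, which is why the two cohomology-vanishing inputs must be used in their unrefined (full) form. (The quasiphantom property of the orthogonal complement then follows exactly as in the cited constructions, from additivity of Hochschild homology over the resulting semiorthogonal decomposition and finiteness of the torsion subgroup of $K(S)$.)
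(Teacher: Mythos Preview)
Your proposal is correct and follows essentially the same approach as the paper: show that each of the relevant $\mathrm{Ext}$-groups vanishes already on $C\times D$ via K\"unneth, using $h^0(\kappa_C)=h^1(\kappa_C)=0$ for the pairs with $i\in\{3,4\}$, $j\in\{1,2\}$ and $h^0(D,\mathcal{O}_D(F_1+F_2-F_3))=h^1(D,\mathcal{O}_D(F_1+F_2-F_3))=0$ for the pairs $(2,1)$ and $(4,3)$, so that the $G$-invariant parts vanish for any choice of characters; maximality then follows from $\mathrm{rk}\,K(S)=4$. Your write-up is in fact more explicit than the paper's in spelling out why each $\mathcal{E}_i$ carries a $G$-equivariant structure and in isolating the reason the ``for any choice of characters'' clause holds.
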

\begin{proof}
Since $p_g=q=0$, every line bundle on $S$ is exceptional.
From the K\"{u}nneth formula we find that
$$ h^j(C \times D, q^*\mathcal{O}_D(F_1+F_2-F_3)) = 0, \forall j, $$
$$ h^j(C \times D, \kappa_C \boxtimes \eta_D) = 0, \forall j, $$
$$ h^j(C \times D, \kappa_C \boxtimes (\eta_D \otimes \mathcal{O}_D(F_1+F_2-F_3))) = 0, \forall j, $$
$$ h^j(C \times D, \kappa_C \boxtimes (\eta_D \otimes \mathcal{O}_D(-F_1-F_2+F_3))) = 0, \forall j. $$
Therefore the $G$-invariant parts are also trivial. Hence, we find
that $ \mathcal{O}_{C \times D}(\chi_1),
q^*\mathcal{O}_D(-F_1-F_2+F_3)(\chi_2), \kappa_C^{-1} \boxtimes
\eta_D(\chi_3), \kappa_C^{-1} \boxtimes (\eta_D \otimes
\mathcal{O}_D(-F_1-F_2+F_3))(\chi_4) $ form an exceptional sequence.
Since the rank of $K(S)$ is 4, the maximal length of exceptional
sequences on $S$ is 4.
\end{proof}

\begin{prop}
Let $\mathcal{A}$ be the orthogonal complement of an exceptional
sequence $ \mathcal{O}_{C \times D}(\chi_1)$,
$q^*\mathcal{O}_D(-F_1-F_2+F_3)(\chi_2)$, $\kappa_C^{-1} \boxtimes
\eta_D(\chi_3)$, $\kappa_C^{-1} \boxtimes (\eta_D \otimes
\mathcal{O}_D(-F_1-F_2+F_3))(\chi_4) $. Then $\mathcal{A}$ is a
quasiphantom category whose Grothendieck group is isomorphic to
$(\mathbb{Z}/2)^4 \oplus (\mathbb{Z}/4)^2$.
\end{prop}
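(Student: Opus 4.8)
The plan is to read off the two defining properties of a quasiphantom — vanishing of Hochschild homology and finiteness of the Grothendieck group — directly from the semiorthogonal decomposition produced by the preceding theorem, using that $HH_*$ and $K(-)$ are both additive along semiorthogonal decompositions and then substituting the invariants of $S$ recorded in Section~1; the argument does not depend on the chosen characters $\chi_1,\dots,\chi_4$. Concretely, by the preceding theorem the four line bundles form an exceptional sequence of maximal length, so writing $\mathcal{E}$ for the admissible subcategory they generate and $\mathcal{A}=\mathcal{E}^{\perp}$ we obtain a semiorthogonal decomposition of $D^b(S)$ into five components: $\mathcal{A}$ together with four subcategories $\mathcal{E}_1,\dots,\mathcal{E}_4$, each generated by one of the exceptional line bundles and hence equivalent to $D^b(\mathrm{pt})$. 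Being the orthogonal complement of an exceptional collection in the derived category of a smooth projective variety, $\mathcal{A}$ is admissible, so asking whether it is a quasiphantom is meaningful.

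For the Hochschild homology I would use additivity of $HH_*$ along semiorthogonal decompositions, which gives $HH_*(S)\cong HH_*(\mathcal{A})\oplus\bigoplus_{i=1}^{4}HH_*(\mathcal{E}_i)$, and each $HH_*(\mathcal{E}_i)\cong HH_*(D^b(\mathrm{pt}))$ is one-dimensional, concentrated in degree $0$. On the other hand the Hochschild--Kostant--Rosenberg isomorphism gives $HH_n(S)\cong\bigoplus_{q-p=n}H^q(S,\Omega^p_S)$, and since $p_g=q=0$, Serre duality forces every Hodge number $h^{p,q}(S)$ to vanish except $h^{0,0}=h^{2,2}=1$ and $h^{1,1}=b_2=2$ (cf.\ Remark~1.5); hence $HH_n(S)=0$ for $n\neq 0$ while $HH_0(S)\cong\mathbb{C}\oplus\mathbb{C}^2\oplus\mathbb{C}=\mathbb{C}^4$. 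Comparing the two descriptions forces $HH_*(\mathcal{A})=0$.

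For the Grothendieck group I would use additivity of $K(-)$ along the same decomposition, giving $K(S)\cong K(\mathcal{A})\oplus\mathbb{Z}^4$ with the free factor spanned by the classes of the four exceptional line bundles. By Lemma~1.6 and Remark~1.5 we have $K(S)\cong\mathbb{Z}^2\oplus Pic(S)\cong\mathbb{Z}^2\oplus H^2(S,\mathbb{Z})$, and the universal coefficient theorem together with $H_1(S,\mathbb{Z})\cong(\mathbb{Z}/2)^4\oplus(\mathbb{Z}/4)^2$ (Theorem~1.4) and $b_2=2$ gives $H^2(S,\mathbb{Z})\cong\mathbb{Z}^2\oplus(\mathbb{Z}/2)^4\oplus(\mathbb{Z}/4)^2$, so $K(S)\cong\mathbb{Z}^4\oplus(\mathbb{Z}/2)^4\oplus(\mathbb{Z}/4)^2$. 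Since $K(\mathcal{A})$ is a direct summand of this finitely generated group and $K(S)$ has free rank $4$, the group $K(\mathcal{A})$ has free rank $0$ and torsion subgroup $(\mathbb{Z}/2)^4\oplus(\mathbb{Z}/4)^2$; thus $K(\mathcal{A})\cong(\mathbb{Z}/2)^4\oplus(\mathbb{Z}/4)^2$, which is finite. Together with $HH_*(\mathcal{A})=0$ this identifies $\mathcal{A}$ as a quasiphantom category with the stated Grothendieck group.

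I do not expect a genuine obstacle here: once the preceding theorem is available the proof is bookkeeping with the two additivity results and the data of Section~1. The point that most deserves care is making sure the four exceptional classes really cut out a free rank-$4$ direct summand of $K(S)$, rather than interacting with its torsion — but this is exactly what additivity of $K(-)$ under the semiorthogonal decomposition provides, so no independent verification is needed — and, on the homological side, correctly pinning down $HH_*(S)$, where the only nontrivial input beyond $p_g=q=0$ is the identity $h^{1,1}(S)=2$ used above.
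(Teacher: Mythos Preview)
Your argument is correct and follows the same route as the paper: the paper's proof is a one-line appeal to Kuznetsov's additivity theorem for Hochschild homology (together with the fact that the total Betti number of $S$ equals the length of the collection), which is exactly what you unpack via the HKR isomorphism and the Hodge numbers. Your treatment is actually more complete than the paper's, since you also spell out the Grothendieck-group computation from Lemma~1.6, Remark~1.5 and Theorem~1.4, which the paper leaves implicit.
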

\begin{proof}
Since the Betti number of $S$ is 4, we see that the orthogonal
complement of an exceptional sequence is a quasiphantom category
from Kuznetsov's theorem \cite{Ku3}.
\end{proof}

Then we can compute the Hochschild cohomologies of the quasiphantom categories. See \cite{Ku4} for the definitions and more details.

\begin{prop}
The pseudoheight of the exceptional sequence $ \mathcal{O}_{C \times
D}(\chi_1)$, $q^*\mathcal{O}_D(-F_1-F_2+F_3)(\chi_2)$,
$\kappa_C^{-1} \boxtimes \eta_D(\chi_3)$, $\kappa_C^{-1} \boxtimes
(\eta_D \otimes \mathcal{O}_D(-F_1-F_2+F_3))(\chi_4) $ is 4 and the
height is 4.
\end{prop}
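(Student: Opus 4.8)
The plan is to reduce every Ext-group to $G$-invariant cohomology on $C\times D$ through the equivalence $D^b(S)\simeq D^b_G(C\times D)$. Recall from \cite{Ku4} that the pseudoheight is the minimum, over cyclic chains $1\le i_0<i_1<\cdots<i_p\le 4$, of $e(E_{i_0},E_{i_1})+\cdots+e(E_{i_{p-1}},E_{i_p})+e(E_{i_p},\mathbb{S}_S^{-1}E_{i_0})-p$, where $e(A,B)=\min\{t:\mathrm{Hom}(A,B[t])\ne 0\}$ and $\mathbb{S}_S=(-)\otimes\omega_S[2]$ is the Serre functor; the height $\mathrm{h}$ is the finer invariant which also takes into account the $A_\infty$-structure on $\bigoplus_{i,j}\mathrm{Ext}^\bullet_S(E_i,E_j)$, and $\mathrm{ph}\le\mathrm{h}$ always. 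Since each $E_i$ is the descent of a box product and $\omega_S$ lifts to $\omega_C\boxtimes\omega_D=\kappa_C^{\otimes 2}\boxtimes\omega_D$, every group $\mathrm{Ext}^\bullet_S(E_i,E_j\otimes\omega_S^{\varepsilon})$ (with $\varepsilon\in\{0,-1\}$) is the $G$-invariant part of the cohomology on $C\times D$ of an explicit box product, which Künneth together with routine degree computations on $C$ and $D$ evaluate.

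I would first dispose of the trivial chain $p=0$: for any single index $i_0$ it contributes $\dim S+\min\{j:H^j(S,-K_S)\ne 0\}$. As $S$ is a minimal surface of general type, $h^0(-K_S)=0$ and $h^1(-K_S)=h^1(2K_S)^\vee=0$ by Kodaira--Mumford vanishing, while $h^2(-K_S)=h^0(2K_S)^\vee\ne 0$; hence this contribution equals $2+2=4$, and already $\mathrm{ph}\le 4$.

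It then remains to show that every chain with $p\ge 1$ has value at least $4$. Such a chain has $p$ forward steps and one closing step, so it suffices to establish: (i) $e(E_i,E_j)\ge 1$ whenever $i<j$, i.e.\ $\mathrm{Hom}_S(E_i,E_j)=0$; and (ii) $e(E_{i_p},\mathbb{S}_S^{-1}E_{i_0})\ge 4$ whenever $i_0<i_p$, i.e.\ $\mathrm{Hom}_S(E_{i_p},E_{i_0}\otimes\omega_S^{-1})=\mathrm{Ext}^1_S(E_{i_p},E_{i_0}\otimes\omega_S^{-1})=0$; granting these the chain's value is at least $p\cdot 1+4-p=4$. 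For (i), the pullback of $E_i^{-1}\otimes E_j$ to $C\times D$ is a box product one of whose two factors has no global sections---the $C$-factor is $\kappa_C^{-1}$, of negative degree, for the pairs $(1,3),(1,4),(2,3),(2,4)$, and the $D$-factor is $\mathcal{O}_D(-F_1-F_2+F_3)$, of negative degree, for $(1,2)$ and $(3,4)$---so $H^0(C\times D,-)$ vanishes before one even passes to $G$-invariants. For (ii), Serre duality on $S$ rewrites the two vanishings as $H^2(S,E_{i_0}^{-1}E_{i_p}\otimes 2K_S)=0$ and $H^1(S,E_{i_0}^{-1}E_{i_p}\otimes 2K_S)=0$; the first is Serre-dual to $H^0$ of a box product whose $C$-factor is $\omega_C^{-1}$ or $\kappa_C^{-1}$, hence has no sections; the second follows from Künneth, because after the twist by $2K_S=\kappa_C^{\otimes 4}\boxtimes\omega_D^{\otimes 2}$ each of the two Künneth factors has degree exceeding $2g-2$ on its curve and is therefore $h^1$-acyclic. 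Combining with the previous paragraph, $\mathrm{ph}=4$.

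Finally $\mathrm{h}\ge\mathrm{ph}=4$; conversely the value $4$ is already realized by the trivial Serre loop at any $E_{i_0}$, whose contribution to $\mathrm{h}$ coincides with its contribution to $\mathrm{ph}$ (a single morphism admits no higher products capable of raising its degree), so $\mathrm{h}\le 4$ and $\mathrm{h}=4$. The substantive work is the finite case check in the third paragraph; the step to watch---which is also what keeps the argument elementary---is that the $G$-module decompositions of these cohomology spaces are never needed, since every group that could obstruct is killed either before one takes $G$-invariants or by a degree inequality on $C$ or on $D$.
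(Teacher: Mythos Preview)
Your argument is correct and follows essentially the same route as the paper: you verify (via K\"unneth and degree computations on $C$ and $D$) that the extended sequence $E_1,\ldots,E_4,E_1\otimes\omega_S^{-1},\ldots,E_4\otimes\omega_S^{-1}$ is Hom-free and that the closing steps have vanishing $\mathrm{Ext}^1$, which is exactly the paper's ``Hom-free and not cyclically $\mathrm{Ext}^1$-connected'' criterion unpacked. Your treatment is in fact more explicit than the paper's two-sentence proof---you spell out the chain-value estimate $p\cdot 1+4-p=4$ and replace the appeal to Kodaira vanishing on $S$ by the more elementary curve-level observation that each K\"unneth factor of $E_{i_0}^{-1}E_{i_p}\otimes 2K_S$ has degree exceeding $2g-2$; the upper bound $\mathrm{ph}\le 4$ from the $p=0$ loop and the height inequality $\mathrm{h}\ge\mathrm{ph}$ are handled identically in both.
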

\begin{proof}
From the K\"{u}nneth formula and degree computation we find that
$\mathcal{O}_{C \times D}(\chi_1)$,
$q^*\mathcal{O}_D(-F_1-F_2+F_3)(\chi_2)$, $\kappa_C^{-1} \boxtimes
\eta_D(\chi_3)$, $\kappa_C^{-1} \boxtimes (\eta_D \otimes
\mathcal{O}_D(-F_1-F_2+F_3))(\chi_4), \mathcal{O}_{C \times
D}(\chi_1) \otimes \omega_S^{-1}$, $q^*\mathcal{O}_D(-F_1-F_2+F_3)(\chi_2) \otimes \omega_S^{-1}$,
$\kappa_C^{-1} \boxtimes \eta_D(\chi_3) \otimes \omega_S^{-1}$, $\kappa_C^{-1} \boxtimes
(\eta_D \otimes \mathcal{O}_D(-F_1-F_2+F_3))(\chi_4) \otimes \omega_S^{-1}$ is Hom-free. This sequence
cannot be cyclically $Ext^1$-connected by Serre duality and Kodaira
vanishing theorem.
\end{proof}

Therefore we get the following consequence about the Hochschild
cohomologies of our quasiphantom categories.

\begin{cor}
Let $\mathcal{A}$ be the orthogonal complement of the exceptional
collection $ \mathcal{O}_{C \times D}(\chi_1)$,
$q^*\mathcal{O}_D(-F_1-F_2+F_3)(\chi_2)$, $\kappa_C^{-1} \boxtimes
\eta_D(\chi_3)$, $\kappa_C^{-1} \boxtimes (\eta_D \otimes
\mathcal{O}_D(-F_1-F_2+F_3))(\chi_4) $. Then we have $ HH^i(S) =
HH^i(\mathcal{A})$, for $i=0,1,2 $, and $ HH^3(S) \subset
HH^3(\mathcal{A}) $.
\end{cor}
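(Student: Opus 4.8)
The plan is to combine the two preceding propositions with Kuznetsov's comparison theorem for Hochschild cohomology of admissible subcategories. First I would recall the setup: we have a semiorthogonal decomposition $D^b(S) = \langle \mathcal{E}_1, \mathcal{E}_2, \mathcal{E}_3, \mathcal{E}_4, \mathcal{A}\rangle$, where the $\mathcal{E}_i$ are the admissible subcategories generated by the four exceptional line bundles from Theorem 6.7 and $\mathcal{A}$ is their orthogonal complement. Since each $\mathcal{E}_i$ is generated by an exceptional object it is equivalent to $D^b(\mathrm{pt})$, whose Hochschild cohomology is $\mathbb{C}$ in degree $0$ and vanishes in positive degrees. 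The strategy is then to feed the vanishing data encoded by Proposition 6.10 (pseudoheight $=4$, height $=4$) into Kuznetsov's theorem which says that if the pseudoheight of the exceptional collection is $\geq p$, then the restriction map $HH^i(S) \to HH^i(\mathcal{A})$ is an isomorphism for $i \leq p-2$ and injective for $i = p-1$.

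The key steps, in order, are as follows. Step one: invoke Proposition 6.10, which has already established that the relevant anticanonically twisted sequence is Hom-free and is not cyclically $\mathrm{Ext}^1$-connected, so that the pseudoheight of the exceptional collection is $4$. Step two: apply Kuznetsov's main result from \cite{Ku4} relating the Hochschild cohomology of $S$ to that of the orthogonal complement $\mathcal{A}$ of an exceptional collection whose pseudoheight is $e$: one has $HH^i(S) \cong HH^i(\mathcal{A})$ for all $i \leq e-2$ and an inclusion $HH^{e-1}(S) \hookrightarrow HH^{e-1}(\mathcal{A})$. Step three: substitute $e=4$ to conclude $HH^i(S) = HH^i(\mathcal{A})$ for $i = 0,1,2$ and $HH^3(S) \subset HH^3(\mathcal{A})$, which is exactly the assertion. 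I would also remark that since $S$ is a surface, $HH^i(S) = 0$ for $i > 4$, so the statement captures essentially all of the low-degree Hochschild cohomology.

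The main obstacle is purely bookkeeping rather than conceptual: one must be careful that the numerical input demanded by Kuznetsov's theorem — the pseudoheight, defined in terms of the minimal degrees in which $\mathrm{Ext}$-groups between the exceptional objects and their anticanonical twists are nonzero — is exactly the quantity computed in Proposition 6.10, and that the indexing conventions (height versus pseudoheight, and the off-by-one shifts in the isomorphism range) match those of \cite{Ku4}. Once the dictionary is set up correctly, the corollary is an immediate formal consequence; there is no further geometry to do. I would therefore keep the proof short, citing Proposition 6.10 for the pseudoheight computation and \cite{Ku4} for the comparison statement, and simply read off the three isomorphisms and one inclusion.
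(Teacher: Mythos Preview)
Your proposal is correct and follows essentially the same route as the paper: the corollary is stated immediately after the pseudoheight/height computation with no further argument, since it is a direct application of Kuznetsov's comparison theorem from \cite{Ku4} once the (pseudo)height is known to be $4$. Your write-up is in fact more detailed than the paper's, which simply says ``Therefore we get the following consequence'' and records the statement; the only minor slips are the section numbers (the relevant theorem and proposition are in Section~5, not Section~6) and that, strictly speaking, Kuznetsov's bound is phrased in terms of the height rather than the pseudoheight---but since pseudoheight is a lower bound for the height and both equal $4$ here, this makes no difference.
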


\section{$G=(\mathbb{Z}/2)^4$ case}

Let $S=(C \times D)/G$ be a surface isogenous to a higher product with $p_g=q=0$ and $G=(\mathbb{Z}/2)^4$. In this case the curves are elliptic-hypereilliptic curves of genus 5. Let $C$ and $D$ be the elliptic-hypereilliptic curves of genus 5. Let $\pi_C : C \to \mathbb{P}^1$, $\pi_D : D \to \mathbb{P}^1$ be the quotient maps. Then $\pi_C$ and $\pi_D$ have 5 branch points. We may assume that the stabilizer elements of $C$ are $(e_1,e_2,e_3,e_4,e:=e_1+e_2+e_3+e_4)$, and the stabilizer elements of $D$ are $(e+e_1,e+e_2,e_1+e_3,e_2+e_4,e_3+e_4)$. Let $E_1, E_2, E_3, E_4, E_5$ be the corresponding set-theoretic fibers on $C$ and let $F_1, F_2, F_3, F_4, F_5$ be the corresponding set-theoretic fibers on $D$. For detail, see \cite{BC}. \\

Now we construct $G$-invariant theta charactristics on $C$ and $D$.
Let $x,y,z,w,x+y+z+w \in G$ be the stabilizer elements. Consider
$(\mathbb{Z}/2)x \oplus (\mathbb{Z}/2)y$-action on $C$ and let
$\pi^{x,y} : C \to \mathbb{P}^1$ be the $(\mathbb{Z}/2)x \oplus
(\mathbb{Z}/2)y$-quotient of $C$. Then $G$-action induces an
$(\mathbb{Z}/2)z \oplus (\mathbb{Z}/2)w$-action on $\mathbb{P}^1$.
Note that the set theoretic fiber of $\pi_C$ with stabilizer element
$z$ is the union of pullbacks of two points $p,q$ on $\mathbb{P}^1$
by $\pi^{x,y}$. Take one such point $p$ on $\mathbb{P}^1$ and denote $E^{x,y,z,w}=(\pi^{x,y})^*p$. In this way we can define three $G$-invariant line bundles $\mathcal{O}_C(E^{e_2,e_4,e_3,e_1})$, $\mathcal{O}_C(E^{e_1,e_2,e,e_3})$,
$\mathcal{O}_C(E^{e_2,e,e_4,e_1})$ on $C$. Let us denote
$\kappa_C=\mathcal{O}_C(E^{e_2,e_4,e_3,e_1})$, and
$\eta_C=\mathcal{O}_C(E^{e_1,e_2,e,e_3}-E^{e_2,e,e_4,e_1})$.

\begin{lem}
(1) $E^{x,y,z,w}$ is a $G$-invariant theta characteristic on $C$. \\
(2) $\eta_C$ is a $G$-invariant torsion line bundle on $C$. \\
(3) $\kappa_C \otimes \eta_C$ is a $G$-invariant theta
characteristic with $h^0(C,\kappa_C \otimes \eta_C)=h^1(C,\kappa_C
\otimes \eta_C)=2$.
\end{lem}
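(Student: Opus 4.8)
The plan is to prove the three parts in turn. The common engine is the observation that each double cover $\pi^{g}\colon C\to C/\langle g\rangle$ by a stabilizer involution $g\in\{e_1,e_2,e_3,e_4,e\}$ has elliptic quotient: $\pi^{g}$ is ramified exactly over the set-theoretic fibre $E_{g}$ with stabilizer $g$, which consists of $|G|/2=8$ points, so Riemann--Hurwitz gives $g(C/\langle g\rangle)=1$; since then $K_{C/\langle g\rangle}\cong\mathcal{O}$, the ramification formula for $\pi^{g}$ yields $K_{C}\cong\mathcal{O}_{C}(E_{g})$ for every such $g$. This is the genus-$5$ bielliptic substitute for the relation $K_{C}\sim(g_{C}-1)g^{1}_{2}$ available in the hyperelliptic case.

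For (1), recall from the construction that the fibre $E_{z}$ with stabilizer $z$ is the disjoint union of the two reduced divisors $(\pi^{x,y})^{*}p$ and $(\pi^{x,y})^{*}q$, where $p,q$ are the two points of $\mathbb{P}^{1}=C/\langle x,y\rangle$ lying over the $z$-branch point. Since $p\sim q$ on $\mathbb{P}^{1}$, we get $2E^{x,y,z,w}=2(\pi^{x,y})^{*}p\sim(\pi^{x,y})^{*}p+(\pi^{x,y})^{*}q=E_{z}\sim K_{C}$ by the previous paragraph, so $E^{x,y,z,w}$ is a theta characteristic; $G$-invariance follows from $g^{*}(\pi^{x,y})^{*}p=(\pi^{x,y})^{*}(\bar g^{-1}p)$ together with $\bar g^{-1}p\in\{p,q\}$ and $p\sim q$. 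Part (2) is then immediate: $\eta_{C}=\mathcal{O}_{C}(E^{e_1,e_2,e,e_3})\otimes\mathcal{O}_{C}(E^{e_2,e,e_4,e_1})^{-1}$ is a product of $G$-invariant line bundles, hence $G$-invariant, and both tensor factors are theta characteristics by (1), so $\eta_{C}^{\otimes 2}\cong K_{C}\otimes K_{C}^{-1}\cong\mathcal{O}_{C}$; thus $\eta_{C}$ is $2$-torsion and $\kappa_{C}\otimes\eta_{C}$ is again a $G$-invariant theta characteristic. Consequently, for (3) it remains only to compute $h^{0}(C,\kappa_{C}\otimes\eta_{C})$, after which $h^{1}=h^{0}$ follows from Serre duality.

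To compute that $h^{0}$, write $\kappa_{C}\otimes\eta_{C}=\mathcal{O}_{C}(E^{e_2,e_4,e_3,e_1}+E^{e_1,e_2,e,e_3}-E^{e_2,e,e_4,e_1})$ and note that each of the three divisors is the pullback of one point under one of the degree-$4$ maps $\pi^{e_2,e_4}$, $\pi^{e_1,e_2}$, $\pi^{e_2,e}$. Because $e_{2}$ lies in each of the pairs $\{e_2,e_4\}$, $\{e_1,e_2\}$, $\{e_2,e\}$, all three of these maps factor through the double cover $\pi^{e_2}\colon C\to B:=C/\langle e_2\rangle$, which is elliptic by the first paragraph. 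Hence $\kappa_{C}\otimes\eta_{C}\cong(\pi^{e_2})^{*}L$ for a line bundle $L$ on $B$ of degree $2+2-2=2$. By \cite[Proposition~2]{Be2} --- equivalently, by $(\pi^{e_2})_{*}\mathcal{O}_{C}\cong\mathcal{O}_{B}\oplus\rho^{-1}$ and the projection formula --- one has $h^{0}\bigl(C,(\pi^{e_2})^{*}L\bigr)=h^{0}(B,L)+h^{0}(B,L\otimes\rho^{-1})$. Here $\deg\rho=4$ (half the number $|E_{e_2}|=8$ of branch points of $\pi^{e_2}$), so $\deg(L\otimes\rho^{-1})=-2$ and the second summand vanishes, while $h^{0}(B,L)=\deg L=2$ by Riemann--Roch on the elliptic curve $B$. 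Therefore $h^{0}(C,\kappa_{C}\otimes\eta_{C})=h^{1}(C,\kappa_{C}\otimes\eta_{C})=2$.

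The only genuinely delicate step is the bookkeeping in (3): one must check that the three divisors constituting $\kappa_{C}\otimes\eta_{C}$ all descend through one and the same double cover $\pi^{e_2}$ --- which works precisely because $e_{2}$ is common to the three index pairs --- and then keep careful track of the degrees through the successive pullbacks. With that structure in hand the cohomology reduces to a single Riemann--Roch computation on an elliptic curve, and parts (1) and (2) require essentially nothing beyond the identity $K_{C}\cong\mathcal{O}_{C}(E_{g})$.
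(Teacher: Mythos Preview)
Your proof is correct and follows the same strategy as the paper's own argument: for (1) you use $(\pi^{x,y})^{*}p+(\pi^{x,y})^{*}q\sim K_{C}$ together with $p\sim q$ on $\mathbb{P}^{1}$, and for (3) you observe that $\kappa_{C}\otimes\eta_{C}$ is pulled back via $\pi^{e_{2}}$ from a degree-$2$ line bundle on the elliptic quotient and then apply Beauville's proposition plus Riemann--Roch. The paper's proof is extremely terse (one sentence each for (1) and (3)), and you have simply supplied the missing details --- in particular, the identity $K_{C}\cong\mathcal{O}_{C}(E_{g})$ via Riemann--Hurwitz for the bielliptic quotients, the explicit check that $e_{2}$ belongs to all three index pairs so that everything descends through $\pi^{e_{2}}$, and the degree count $\deg\rho=4$ forcing the second summand to vanish --- but there is no substantive difference in approach.
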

\begin{proof}
(1) follows immediately since $(\pi^{x,y})^*p+(\pi^{x,y})^*q$ is a
canonical divisor on $C$ and $(\pi^{x,y})^*p \sim (\pi^{x,y})^*q$.
(2) is obvious. Consider $e_2$-action on $C$. Then $\kappa_C \otimes \eta_C$ is a pullback of degree 2 line bundles on $C/{\langle e_2 \rangle}$. From Proposition 3.3 and Riemann-Roch formula we have (3).
\end{proof}

Similarly we can define three theta characteristics $\mathcal{O}_D(F^{e+e_1,e_3+e_4,e_2+e_4,e_1+e_3})$, \\ $\mathcal{O}_D(F^{e_1+e_3,e_2+e_4,e+e_1,e+e_2})$,$\mathcal{O}_D(F^{e+e_2,e_2+e_4,e_1+e_3,e_3+e_4})$ on $D$ in a
similar way. Let us denote $\kappa_D=\mathcal{O}_D(F^{e+e_1,e_3+e_4,e_2+e_4,e_1+e_3})$, $\eta_D=\mathcal{O}_D(F^{e_1+e_3,e_2+e_4,e+e_1,e+e_2}-F^{e+e_2,e_2+e_4,e_1+e_3,e_3+e_4})$

\begin{lem}
(1) $F^{x,y,z,w}$ is a $G$-invariant theta characteristic on $D$. \\
(2) $\eta_D$ is a $G$-invariant torsion line bundle on $D$. \\
(3) $\kappa_D \otimes \eta_D$ is a $G$-invariant theta
characteristic with $h^0(D,\kappa_D \otimes \eta_D)=h^1(D,\kappa_D
\otimes \eta_D)=0$.
\end{lem}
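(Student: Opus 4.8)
The plan is to mirror almost exactly the argument used for $\kappa_C$ in the $(\mathbb{Z}/2)^3$ case (Lemma 5.1), since the construction of $F^{x,y,z,w}$, $\eta_D$ and $\kappa_D$ on $D$ is formally identical to the construction of $E^{x,y,z,w}$, $\eta_C$ and $\kappa_C$ on $C$ in Lemma 6.1. For part (1), I would observe that $\pi^{x,y}:D\to\mathbb{P}^1$ is the quotient by $(\mathbb{Z}/2)x\oplus(\mathbb{Z}/2)y$; the fibre of $\pi_D$ over the branch point with stabilizer $z$ consists of the preimages of two points $p,q\in\mathbb{P}^1$, and $(\pi^{x,y})^*p+(\pi^{x,y})^*q$ is a canonical divisor on $D$ because away from the other branch points $\pi^{x,y}$ is a degree-$4$ cover of $\mathbb{P}^1$ ramified exactly over these and the images of the remaining branch points, so the Hurwitz formula forces $K_D\sim(\pi^{x,y})^*p+(\pi^{x,y})^*q$. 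Since $(\pi^{x,y})^*p\sim(\pi^{x,y})^*q$ (both are fibres of $\pi^{x,y}$), we get $2F^{x,y,z,w}\sim K_D$, i.e. $F^{x,y,z,w}$ is a theta characteristic; its $G$-invariance is the pullback statement of Lemma 4.13 applied to the $(\mathbb{Z}/2)z\oplus(\mathbb{Z}/2)w$-invariant line bundle $\mathcal{O}_{\mathbb{P}^1}(p)$ on $\mathbb{P}^1$, combined with the fact that the residual action permutes $p,q$ if necessary — exactly as in the proof of Lemma 5.1.

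For part (2), $\eta_D=\mathcal{O}_D(F^{e_1+e_3,e_2+e_4,e+e_1,e+e_2}-F^{e+e_2,e_2+e_4,e_1+e_3,e_3+e_4})$ is a difference of two of these $G$-invariant divisors, hence $G$-invariant; it is torsion because both summands are theta characteristics on the same curve $D$, so $2F^{e_1+e_3,e_2+e_4,e+e_1,e+e_2}\sim K_D\sim 2F^{e+e_2,e_2+e_4,e_1+e_3,e_3+e_4}$, whence $2\eta_D\sim\mathcal{O}_D$. (If one wants the precise torsion order or nontriviality one can invoke the analogue of Lemma 3.1 on the appropriate $(\mathbb{Z}/2)$-quotient, but for the statement only "torsion" is needed.) For part (3), $\kappa_D\otimes\eta_D=\mathcal{O}_D(F^{e+e_1,e_3+e_4,e_2+e_4,e_1+e_3}+F^{e_1+e_3,e_2+e_4,e+e_1,e+e_2}-F^{e+e_2,e_2+e_4,e_1+e_3,e_3+e_4})$ is again a theta characteristic, being $\kappa_D$ tensored by a $2$-torsion bundle; $G$-invariance is clear from (1) and (2). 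To compute $h^0$, I would pick a single involution in $G$ — by analogy with the $e_2$-action used on $C$ — under whose quotient $\tau:D\to B:=D/\langle e'\rangle$ the line bundle $\kappa_D\otimes\eta_D$ becomes a pullback $\tau^*L$ of a degree-$2$ line bundle $L$ on $B$ (this is where the specific combinatorics of which $F$'s appear matters: one checks the divisor $F^{e+e_1,\dots}+F^{e_1+e_3,\dots}-F^{e+e_2,\dots}$ is supported on, and invariant under, the chosen involution in the way required by Proposition 3.3), and then apply Proposition 3.3(2): $h^0(\kappa_D\otimes\eta_D)=h^0(B,L)+h^1(B,L)$. The curve $B$ here should be $\mathbb{P}^1$ or an elliptic curve depending on the chosen involution, and $L$ should have degree $2$; Riemann–Roch on $B$ together with the requirement $L^2\cong K_B\otimes\rho(-\pi_*E)$ from Proposition 3.3(1) pins down $h^0(L),h^1(L)$, and the claim is that this gives $0+0=0$, in contrast to the $C$-side where one got $1+1=2$.

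The main obstacle is part (3): identifying the correct involution $e'\in G$ and verifying that $\kappa_D\otimes\eta_D$ really is a pullback from the quotient $D/\langle e'\rangle$, together with computing $h^0$ and $h^1$ of the resulting degree-$2$ bundle $L$ on that quotient. This requires tracking the explicit branch divisors $E^{\cdots}$ and $F^{\cdots}$ through Proposition 3.3, i.e. writing $\kappa_D\otimes\eta_D=\tau^*L(E)$ for an appropriate $E\subset R$ and checking the constraint $L^2\cong K_B\otimes\rho(-\tau_*E)$ — a finite but fiddly divisor-class computation on $D$ that depends on the precise choice of stabilizer labelling fixed above (following \cite{BC}). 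Once that is settled, the vanishing $h^0(D,\kappa_D\otimes\eta_D)=h^1(D,\kappa_D\otimes\eta_D)=0$ follows formally, just as in Lemma 5.1, and (1),(2) are routine as indicated. As with the $(\mathbb{Z}/2)^3$ case, the hands-on verification of the affine-chart compatibility underlying Lemma 4.13 can be left to the reader.
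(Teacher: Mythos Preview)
Your handling of (1) and (2) is fine and matches the paper. The gap is in (3).

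By analogy with Lemma 6.1(3) you expect one application of Proposition 3.3 to finish the job: choose an involution $e'$, write $\kappa_D\otimes\eta_D=\tau^*L(E)$ with $L$ a degree-$2$ bundle on $B=D/\langle e'\rangle$, and read off $h^0(L)+h^1(L)=0$. But this cannot work. No single involution in $G$ acting on $D$ has quotient $\mathbb{P}^1$: the stabilizer involutions give elliptic quotients (eight fixed points, Riemann--Hurwitz forces $g_B=1$) and the fixed-point-free ones give genus-$3$ quotients. On an elliptic curve a degree-$2$ line bundle has $h^0=2$, $h^1=0$, so the output would be $2$, not $0$; that is exactly why the $C$-side gives $2$. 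The difference on the $D$-side is that for the correct involution $e'=e_2+e_4$ the decomposition $\kappa_D\otimes\eta_D\cong(\pi^{e_2+e_4})^*L\otimes\mathcal{O}_D(F^{e+e_1,e_3+e_4,e_2+e_4,e_1+e_3})$ has a \emph{nonempty} ramification part (half of the eight ramification points of $\pi^{e_2+e_4}$), forcing $\deg L=0$. So $L$ is a $2$-torsion line bundle on the elliptic curve $E$, and Proposition 3.3 only tells you $h^0(\kappa_D\otimes\eta_D)=h^0(L)+h^1(L)$, which is $0$ or $2$ according as $L$ is nontrivial or trivial.

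That is the step your plan is missing: you must show $L\not\cong\mathcal{O}_E$. The paper does this by a \emph{second} application of Proposition 3.3, now to the induced $e_1+e_3$-involution on $E$ (whose quotient is $\mathbb{P}^1$), and checks that the resulting line bundle on $\mathbb{P}^1$ is $\mathcal{O}(-1)$, hence $h^0=h^1=0$. So the argument is a two-stage descent $D\to E\to\mathbb{P}^1$ via $e_2+e_4$ and then $e_1+e_3$, not the single-stage reduction you propose.
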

\begin{proof}
The only nontrivial part is (3). Consider $e_2+e_4$-action on $D$,
and let $\pi^{e_2+e_4}:D \to E$ be the quotient map. Then $\kappa_D
\otimes \eta_D \cong (\pi^{e_2+e_4})^*(L) \otimes
\mathcal{O}_D(F^{e+e_1,e_3+e_4,e_2+e_4,e_1+e_3})$ for some $L$. From
the Proposition 3.3 we get $h^0(D,\kappa_D \otimes
\eta_D)=h^0(L)+h^1(L)$. Consider $e_1+e_3$-action of $E$. Then we
can prove that $L$ is a noneffective theta characteristic on $E$ by the same argument. Therefore we get $h^0(D,\kappa_D
\otimes \eta_D)=h^1(D,\kappa_D \otimes \eta_D)=0$.
\end{proof}

\begin{prop}
Let $\kappa_C, \kappa_D$ be the $G$-invariant theta characteristics
constructed above, and let $\eta_C, \eta_D$ be the $G$-invariant
$2$-torsion line bundles constructed above. Then $
\alpha(\kappa_C)\alpha(\eta_D) = \alpha(\eta_C)\alpha(\kappa_D) = 0
\in H^2(G,\mathbb{C}^*). $
\end{prop}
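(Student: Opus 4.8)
The plan is to compute, at the cocycle level, the Schur multipliers of the four invariant line bundles $\kappa_C,\eta_C,\kappa_D,\eta_D$ on the curves $C$ and $D$, and then exhibit explicit $1$-cochains $t,t':G\to\mathbb{C}^*$ witnessing that $\alpha(\kappa_C)\alpha(\eta_D)$ and $\alpha(\eta_C)\alpha(\kappa_D)$ are coboundaries. The computation proceeds exactly as in the $G=(\mathbb{Z}/2)^3$ case treated above. First I would note that each of the generating invariant line bundles is of the form $\mathcal{O}_C(E^{x,y,z,w})$ (resp. on $D$), which by construction is the pullback $(\pi^{x,y})^*\mathcal{O}_{\mathbb{P}^1}(1)$ of the invariant $\mathcal{O}(1)$ on $\mathbb{P}^1$ under the $(\mathbb{Z}/2)x\oplus(\mathbb{Z}/2)y$-quotient, twisted by a $G$-equivariant divisor; the $G$-equivariant twist contributes a coboundary and so may be ignored. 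By Lemma~4.16 (the pullback lemma), the cocycle of $(\pi^{x,y})^*\mathcal{O}(1)$ restricted to the relevant subgroups is controlled by the cocycle of $\mathcal{O}(1)$ on $\mathbb{P}^1$ under the induced $(\mathbb{Z}/2)z\oplus(\mathbb{Z}/2)w$-action, which was computed in the basic example of Section~4.4; the remaining values are then forced by normality of the cocycle (Lemmas~4.9 and 4.10).

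Concretely, I would record eight tables of size $16\times 16$ (one for each of $\alpha(\kappa_C)$, $\alpha(\eta_C)$, $\alpha(\kappa_D)$, $\alpha(\eta_D)$, and the two products, with the last two derived), each entry being $\pm 1$, indexed by the sixteen elements of $G=(\mathbb{Z}/2)^4$. For $\eta_C=\mathcal{O}_C(E^{e_1,e_2,e,e_3}-E^{e_2,e,e_4,e_1})$ the relevant cocycle is the product (quotient, but $\pm1$ so the same) of the cocycles of the two pullback bundles, and similarly for $\eta_D$. Having assembled the tables for $\alpha(\kappa_C)\alpha(\eta_D)$ and $\alpha(\eta_C)\alpha(\kappa_D)$, the final step is to produce $1$-cochains: since $H^2((\mathbb{Z}/2)^4,\mathbb{C}^*)\cong(\mathbb{Z}/2)^6$ has a concrete basis given by the "alternating" cocycles $\alpha_{ij}(x,y)=(-1)^{x_i y_j - x_j y_i}$ for $i<j$, one reads off the class of each product cocycle in these coordinates and checks it is the trivial class, equivalently finds $t$ with $\alpha(x,y)=t(x)t(y)t(xy)^{-1}$; because $G$ is $2$-torsion one may take the values of $t$ in $\{\pm1,\pm\sqrt{-1}\}$ exactly as in the $(\mathbb{Z}/2)^3$ proof.

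The main obstacle is purely bookkeeping: correctly determining the induced actions on the various $\mathbb{P}^1$'s from the stabilizer data $(e_1,e_2,e_3,e_4,e)$ on $C$ and $(e+e_1,e+e_2,e_1+e_3,e_2+e_4,e_3+e_4)$ on $D$, and keeping the six "off-diagonal" cocycle coordinates straight across the tensor products. The conceptual content — that the two products are trivial in $H^2(G,\mathbb{C}^*)$ — is not something one can see a priori; it is the outcome of the explicit computation, which is why the curves and theta characteristics were chosen in precisely this way. Once the tables are in hand, exhibiting the coboundary $1$-cochains $t$ and $t'$ and verifying $\alpha(x,y)=t(x)t(y)t(xy)^{-1}$ on all pairs is a routine finite check, entirely parallel to the $G=(\mathbb{Z}/2)^3$ argument, and it completes the proof.
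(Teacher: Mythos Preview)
Your proposal is correct and follows essentially the same approach as the paper: compute the $16\times 16$ cocycle tables for the building blocks $\mathcal{O}_C(E^{x,y,z,w})$ and $\mathcal{O}_D(F^{x,y,z,w})$ via the basic $\mathcal{O}(1)$-on-$\mathbb{P}^1$ example together with the pullback lemma, multiply them to obtain $\alpha(\kappa_C)\alpha(\eta_D)$ and $\alpha(\eta_C)\alpha(\kappa_D)$, and then exhibit explicit $1$-cochains $t,t':G\to\{\pm1,\pm\sqrt{-1}\}$ witnessing the coboundary identity. The only discrepancies are cosmetic: your lemma numbers are off (the pullback lemma and the normal-cocycle lemmas are earlier in Section~4 than you cite), and in the $(\mathbb{Z}/2)^4$ case the bundles $\mathcal{O}_C(E^{x,y,z,w})$ are \emph{exactly} pullbacks of $\mathcal{O}(1)$ with no extra $G$-equivariant twist needed.
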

\begin{proof}
As the $(\mathbb{Z}/2)^3$-case we can compute $\alpha(\kappa_C)$,
$\alpha(\eta_C)$, $\alpha(\kappa_D)$, $\alpha(\eta_D)$ (see the
table in the last part of the paper). Finally we can show that
$\alpha(\kappa_C)\alpha(\eta_D)$ is a coboundary by giving
$t(0)=1,t(e_1)=1,t(e_2)=1,t(e_1+e_2)=-1,t(e_3)=\sqrt{-1},t(e_1+e_3)=\sqrt{-1},t(e_2+e_3)=-\sqrt{-1},t(e_1+e_2+e_3)=\sqrt{-1},t(e_4)=1,t(e_1+e_4)=-1,t(e_2+e_4)=1,,t(e_1+e_2+e_4)=1,t(e_3+e_4)=-\sqrt{-1},t(e_1+e_3+e_4)=\sqrt{-1},t(e_2+e_3+e_4)=\sqrt{-1},t(e_1+e_2+e_3+e_4)=\sqrt{-1}.$
and check that $\alpha(x,y)=t(x)t(y)t(xy)^{-1}, \forall x,y\in G$.
\\
Similarly we can show that $\alpha(\eta_C)\alpha(\kappa_D)$ is a
coboundary by
giving$t(0)=1,t(e_1)=\sqrt{-1},t(e_2)=1,t(e_1+e_2)=\sqrt{-1},t(e_3)=\sqrt{-1},t(e_1+e_3)=-1,t(e_2+e_3)=\sqrt{-1},t(e_1+e_2+e_3)=-1,t(e_4)=\sqrt{-1},t(e_1+e_4)=-1,t(e_2+e_4)=\sqrt{-1},t(e_1+e_2+e_4)=-1,t(e_3+e_4)=1,t(e_1+e_3+e_4)=-\sqrt{-1},t(e_2+e_3+e_4)=1,t(e_1+e_2+e_3+e_4)=-\sqrt{-1}.$
and check that $\alpha(x,y)=t(x)t(y)t(xy)^{-1}, \forall x,y\in G$.
\\
Therefore we have $\alpha(\kappa_C)\alpha(\eta_D) =
\alpha(\eta_C)\alpha(\kappa_D) = 0 \in H^2(G,\mathbb{C}^*) $
\end{proof}

\begin{rmk}
(1) Because the $G$-action on $C \times D$ is free we have $D^b(S)
\simeq D^b_G(C \times D)$ and every $G$-equivariant line bundle on
$C \times D$ corresponds to a line bundle on $S$. Therefore we
regards a $G$-equivariant line bundle on $C \times D$ as a line
bundle on $S$. \\
(2) The above proposition implies that we can regard $\kappa_C
\boxtimes \eta_D$, $\eta_C \boxtimes \kappa_D$ as line bundles on
$S$.
\end{rmk}

\begin{thm} Let $S=(C \times D)/G$ be a surface isogenous to a higher product with $p_g=q=0$, $G=(\mathbb{Z}/2)^4$. Then there exist two characters $\chi_1, \chi_2 \in \widehat{G}$ such that the following sequence
$$ \mathcal{O}, \kappa_C^{-1} \boxtimes \eta_D(\chi_1), \eta_C \boxtimes \kappa_D^{-1}(\chi_2), (\kappa_C^{-1} \otimes \eta_C) \boxtimes (\kappa_D^{-1} \otimes \eta_D)(\chi_1+\chi_2) $$
is a exceptional sequence of maximal length on $D^b(S)$.
\end{thm}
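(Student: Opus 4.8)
The plan is to verify the two defining properties of an exceptional sequence — that each term is exceptional and that all backward $\mathrm{Hom}$'s and $\mathrm{Ext}$'s vanish — and then to observe that maximality is automatic from the rank computation. Since $p_g=q=0$, every line bundle on $S$ is exceptional, so the first point is free. For the semiorthogonality, I would pass to the $G$-equivariant picture on $C\times D$: by the remark after Proposition~6.3, each of the four line bundles above lifts to a $G$-equivariant line bundle on $C\times D$ (after choosing the characters $\chi_1,\chi_2$, which twist the equivariant structure), so $\mathrm{Hom}_{D^b(S)}$ between two of them equals the $G$-invariant part of $\mathrm{Ext}^\bullet_{C\times D}$ between the corresponding equivariant line bundles. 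Thus it suffices to show that for each backward pair the full (non-equivariant) cohomology of the relevant line bundle on $C\times D$ vanishes; the $G$-invariant part then vanishes a fortiori, and in particular the choice of $\chi_1,\chi_2$ is irrelevant to the vanishing — they are only needed to make the objects descend to $S$.

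Concretely, the backward differences to check are, writing $L(\chi)$ for a line bundle twisted by a character: between consecutive and non-consecutive terms one must show $h^j(C\times D,\,M)=0$ for all $j$, where $M$ runs over the line bundles $\kappa_C\boxtimes\eta_D$, $\ \eta_C^{-1}\boxtimes\kappa_D$, $\ (\kappa_C\otimes\eta_C^{-1})\boxtimes(\kappa_D\otimes\eta_D^{-1})$, as well as the differences between the second/third and the fourth term, namely $(\kappa_C^{-1}\otimes\eta_C^{-1})\boxtimes\eta_C$-type products — more precisely $\kappa_C^{-1}\boxtimes\eta_D$ versus $(\kappa_C^{-1}\otimes\eta_C)\boxtimes(\kappa_D^{-1}\otimes\eta_D)$ gives the box product $\eta_C\boxtimes(\kappa_D^{-1})$, and $\eta_C\boxtimes\kappa_D^{-1}$ versus the fourth term gives $(\kappa_C^{-1})\boxtimes(\kappa_D^{-1}\otimes\eta_D\otimes\kappa_D)$, i.e. a box product involving $\kappa_C^{-1}$ and $\eta_D$. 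By the K\"unneth formula each such vanishing reduces to showing that on $C$ (resp. on $D$) the corresponding factor has vanishing $h^0$ and $h^1$. For the theta-characteristic factors $\kappa_C$, $\kappa_C\otimes\eta_C$ this is exactly the content of Lemma~6.1(3) together with the noneffectivity of the basic theta characteristic $E^{x,y,z,w}$ (which is $\kappa_C$ itself — one checks via Proposition~3.3 that $h^0=h^1=0$), and similarly Lemma~6.2(3) on $D$; for the torsion-times-theta factors like $\eta_C\otimes\kappa_C^{\pm1}$ one again invokes Proposition~3.3 after passing to the appropriate $\mathbb{Z}/2$-quotient, as in the proofs of Lemmas~6.1 and~6.2.

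The cleanest organizing principle is that all the relevant factors on $C$ (and on $D$) are $\sigma$-invariant line bundles of degree $g-1=4$ whose associated pair $(L,E)$ under Proposition~3.3 has $L$ a noneffective theta characteristic on the quotient, hence $h^0=h^1=0$ by part (2) of that proposition; the slightly different factors (where the total degree drops because of an $\eta$-twist) are handled by a direct degree count as in the proof of Lemma~5.8 in the $(\mathbb{Z}/2)^3$ case, possibly using a $G$-eigensection argument to rule out effectivity. Finally, since the rank of $K(S)$ is $4$ by Lemma~2.5 together with Remark~1.5 ($\mathrm{Pic}(S)\cong H^2(S,\mathbb{Z})$ has rank $b_2=2$), a sequence of four line bundles that is exceptional is automatically of maximal length.

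I expect the main obstacle to be the bookkeeping: one must correctly enumerate all $\binom{4}{2}=6$ backward pairs, reduce each to a pair of curve-level vanishings via K\"unneth, and confirm that in every case at least one of the two factors is a noneffective theta characteristic (or has negative degree). The cohomology-vanishing inputs themselves are already in hand from Section~3 and Lemmas~6.1--6.2; the subtle point is verifying that the $G$-invariant (not merely equivariant) structures on $\kappa_C\boxtimes\eta_D$ and $\eta_C\boxtimes\kappa_D$ — guaranteed by Proposition~6.3 — can be simultaneously promoted to $G$-equivariant structures after a single consistent choice of characters $\chi_1,\chi_2$, so that the fourth object $(\kappa_C^{-1}\otimes\eta_C)\boxtimes(\kappa_D^{-1}\otimes\eta_D)(\chi_1+\chi_2)$ is forced and is itself equivariant; this is where the cocycle computation $\alpha(\kappa_C)\alpha(\eta_D)=\alpha(\eta_C)\alpha(\kappa_D)=0$ is used, since it is exactly the compatibility making the product structures equivariant.
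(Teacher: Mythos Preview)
Your plan contains a genuine error at the heart of the argument. You claim that for every backward pair the \emph{full} cohomology on $C\times D$ vanishes, so the characters $\chi_1,\chi_2$ are irrelevant to the vanishing. This is false. In the $(\mathbb{Z}/2)^4$ case the theta characteristic $\kappa_C=\mathcal{O}_C(E^{e_2,e_4,e_3,e_1})$ is \emph{effective}: Proposition~3.3 gives $h^0(\kappa_C)=h^1(\kappa_C)=2$, not $0$ (indeed $\kappa_C$ is the pullback of a point under a degree-$4$ map to $\mathbb{P}^1$, so $|\kappa_C|$ contains that pencil). You also misread Lemma~6.1(3): it asserts $h^0(\kappa_C\otimes\eta_C)=h^1(\kappa_C\otimes\eta_C)=2$, not $0$. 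Consequently, by K\"unneth,
\[
h^1(C\times D,\ \kappa_C\boxtimes\eta_D)=h^2(C\times D,\ \kappa_C\boxtimes\eta_D)=8,
\]
and likewise for $\eta_C\boxtimes\kappa_D$. So the non-equivariant vanishing you assert simply does not hold for the pairs $(2,1)$, $(3,1)$, $(4,2)$, $(4,3)$.

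The paper's actual mechanism is different and explains why the statement reads ``there \emph{exist} $\chi_1,\chi_2$'' (contrast Theorem~5.8, where \emph{any} characters work). One has $h^0(C\times D,\kappa_C\boxtimes\eta_D)=0$ for free since $h^0(\eta_D)=0$, but $H^2$ is an $8$-dimensional $G$-module with $|G|=16$; hence some character does not occur, and twisting by that character $\chi_1$ kills $H^2(S,L_1)=H^2(C\times D,\kappa_C\boxtimes\eta_D(\chi_1))^G$. Then $H^1(S,L_1)=0$ follows from $\chi(S,L_1)=0$ via Riemann--Roch on $S$, not from K\"unneth. The same argument with an independent choice of $\chi_2$ handles $L_2$. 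Your K\"unneth strategy \emph{does} work for the remaining two conditions $H^*(S,L_1\otimes L_2)=H^*(S,L_1^{-1}\otimes L_2)=0$, because there the $D$-factor is $\kappa_D\otimes\eta_D$, which is genuinely noneffective by Lemma~6.2(3); this is the one place where your ``one factor is a noneffective theta characteristic'' heuristic is correct, and it is the asymmetry between Lemmas~6.1(3) and~6.2(3) that makes the whole construction go through.
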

\begin{proof}
Since $p_g=q=0$, every line bundle on $S$ is exceptional. For
simplicity let $\mathcal{O}$, $L_1^{-1}$, $L_2^{-1}$, $(L_1 \otimes
L_2)^{-1}$ be the line bundles on $S$ corresponding to $
\mathcal{O}, \kappa_C^{-1} \boxtimes \eta_D(\chi_1), \eta_C
\boxtimes \kappa_D^{-1}(\chi_2), (\kappa_C^{-1} \otimes \eta_C)
\boxtimes (\kappa_D^{-1} \otimes \eta_D)(\chi_1+\chi_2) $ on $C
\times D$. We have to show that $H^i(S,L_1)=0$, $H^i(S,L_2)=0$,
$H^i(S,L_1^{-1} \otimes L_2)=0$, $H^i(S,L_1 \otimes L_2)=0$ for all $i$. Note that Riemann-Roch formula implies that $\chi(S,L_1)=0$, $\chi(S,L_2)=0$, $\chi(S,L_1^{-1} \otimes L_2)=0$, $\chi(S,L_1 \otimes L_2)=0$.

From Proposition 3.3 we see that $h^0(C,\kappa_C)=h^1(C,\kappa_C)=2$, $h^0(D,\kappa_D)=h^1(D,\kappa_D)=2$, and from Riemann-Roch we get $h^0(C,\eta_C)=0$, $h^1(C,\eta_C)=4$, $h^0(D,\eta_D)=0$, $h^1(D,\eta_D)=4$. From this we get $h^2(C \times D, \kappa_C \boxtimes \eta_D)=8$ and $h^2(C \times D, \eta_C \boxtimes \kappa_D)=8$. Therefore there exist $\chi_1, \chi_2 \in \widehat{G}$ such that $H^2(C \times D, \kappa_C \boxtimes \eta_D(\chi_1))^G=0$, $H^2(C \times D, \eta_C \boxtimes \kappa_D(\chi_2))^G=0$. Let $L_1^{-1}=\kappa_C^{-1} \boxtimes \eta_D(\chi_1)$, $L_2^{-1}=\eta_C
\boxtimes \kappa_D^{-1}(\chi_2)$. Then we have $H^0(S,L_1)=H^2(S,L_1)=0$ and $H^1(S,L_1)=0$ follows since $\chi(L_1)=0$. Similarly we get $H^i(S,L_2)=0$, for all $i$.

It remains to show that $H^i(S,L_1 \otimes L_2)=H^i(S,L_1^{-1} \otimes L_2)=0$, for all
$i$. Recall that $L_1^{-1} \otimes L_2$  corresponds to
$(\kappa_C^{-1} \otimes \eta_C) \boxtimes (\kappa_D \otimes
\eta_D)(\chi_1+\chi_2)$ on $C \times D$. From the K\"{u}nneth
formula we find that $H^0(C \times D, (\kappa_C^{-1} \otimes \eta_C)
\boxtimes (\kappa_D \otimes \eta_D))=0$ for degree reason. Finally
$H^1(C \times D, (\kappa_C^{-1} \otimes \eta_C) \boxtimes (\kappa_D
\otimes \eta_D))=H^0(C, \kappa_C^{-1} \otimes \eta_C) \otimes
H^1(D,\kappa_D \otimes \eta_D) \oplus H^1(C, \kappa_C^{-1} \otimes
\eta_C) \otimes H^0(D,\kappa_D \otimes \eta_D)=0$, $H^2(C \times D,
(\kappa_C^{-1} \otimes \eta_C) \boxtimes (\kappa_D \otimes
\eta_D))=H^1(C, \kappa_C^{-1} \otimes \eta_C) \otimes H^1(D,\kappa_D
\otimes \eta_D)=0$ since $\kappa_D \otimes \eta_D$ is a noneffective
theta characteristic on $D$. We can prove $H^i(S, L_1 \otimes L_2)=0$, for all
$i$ similarly. Therefore we get the desired result.
Since the rank of $K(S)$ is 4, the maximal length of exceptional
sequences on $S$ is 4.
\end{proof}

\begin{prop}
Let $\mathcal{A}$ be the orthogonal complement of an exceptional
sequence $ \mathcal{O}, \kappa_C^{-1} \boxtimes \eta_D(\chi_1),
\eta_C \boxtimes \kappa_D^{-1}(\chi_2), (\kappa_C^{-1} \otimes
\eta_C) \boxtimes (\kappa_D^{-1} \otimes \eta_D)(\chi_1+\chi_2) $.
Then $\mathcal{A}$ is a quasiphantom category whose Grothendieck
group is isomorphic to $(\mathbb{Z}/4)^4$.

\end{prop}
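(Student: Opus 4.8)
The plan is to read off both assertions from the semiorthogonal decomposition $D^b(S)=\langle\mathcal{A},E_1,E_2,E_3,E_4\rangle$ produced by the preceding theorem, where $E_1,\dots,E_4$ is the exceptional sequence constructed there, together with the numerical invariants of $S$ collected in Section~1. The quasiphantom property will come from Kuznetsov's additivity of Hochschild homology, and the precise form of the Grothendieck group from the structure of $K(S)$ together with the fact that the exceptional objects span a free direct summand of $K(S)$.

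First I would verify that $\mathcal{A}$ is a quasiphantom category, exactly as in the $(\mathbb{Z}/2)^3$ case. Since $q=0$ and $b_2(S)=2$ by Noether's formula recalled in Section~1, the sum of the Betti numbers of $S$ is $1+0+2+0+1=4$, which equals the length of the collection. Hochschild homology is additive along semiorthogonal decompositions and $HH_\ast(\langle E_i\rangle)$ is $\mathbb{C}$ concentrated in degree $0$ for every $i$, so Kuznetsov's theorem \cite{Ku3} forces $HH_\ast(\mathcal{A})=0$. The decomposition also induces $K(S)\cong K(\mathcal{A})\oplus K(\langle E_1,\dots,E_4\rangle)$ with the second summand free of rank $4$; since $K(S)\cong\mathbb{Z}^2\oplus Pic(S)$ by Lemma~1.6 and $Pic(S)\cong H^2(S,\mathbb{Z})$ has rank $b_2=2$, the group $K(\mathcal{A})$ has rank $0$ and is therefore finite. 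Hence $\mathcal{A}$ is a quasiphantom category.

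To identify $K(\mathcal{A})$ I would combine this with the explicit structure of $K(S)$. By Lemma~1.6 and the remark identifying $Pic(S)\cong H^2(S,\mathbb{Z})$ through the exponential sequence, together with the universal coefficient theorem, $K(S)\cong\mathbb{Z}^2\oplus H^2(S,\mathbb{Z})\cong\mathbb{Z}^2\oplus\mathbb{Z}^{b_2}\oplus Ext^1(H_1(S,\mathbb{Z}),\mathbb{Z})\cong\mathbb{Z}^4\oplus(\mathbb{Z}/4)^4$, using $b_2=2$ and the computation of $H_1(S,\mathbb{Z})$ for $G=(\mathbb{Z}/2)^4$; in particular the torsion subgroup of $K(S)$ is $(\mathbb{Z}/4)^4$. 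In the splitting $K(S)\cong K(\mathcal{A})\oplus K(\langle E_1,\dots,E_4\rangle)$ the second summand is torsion-free, so the torsion subgroup of $K(S)$ coincides with that of $K(\mathcal{A})$; and since $K(\mathcal{A})$ is finite, it equals its own torsion subgroup. Therefore $K(\mathcal{A})\cong(\mathbb{Z}/4)^4$, as claimed.

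There is no essential obstacle left at this point: the substantive work was the construction of the exceptional sequence in the preceding theorem, which itself relies on the cocycle computations of the previous proposition, and the input $H_1(S,\mathbb{Z})\cong(\mathbb{Z}/4)^4$ is quoted from \cite{BC2}, \cite{BCF}, \cite{Sh}. The only points that need some care are checking that the numerical hypothesis of Kuznetsov's theorem is met --- namely that the length of the exceptional collection equals $\sum_i b_i(S)$ --- and following the torsion faithfully through Lemma~1.6 and the universal coefficient theorem, so that the torsion subgroup of $K(S)$ is recognized as exactly $(\mathbb{Z}/4)^4$ and not merely as an abstract finite group of order $4^4$.
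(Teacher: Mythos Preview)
Your argument is correct and follows the same route as the paper: invoke Kuznetsov's additivity theorem together with the observation that the total Betti number of $S$ equals the length $4$ of the collection to get $HH_\ast(\mathcal{A})=0$, and deduce finiteness of $K(\mathcal{A})$ from the semiorthogonal splitting of $K$-theory. The paper's own proof is in fact terser than yours --- it records only the quasiphantom assertion via Kuznetsov's theorem and does not spell out the identification $K(\mathcal{A})\cong(\mathbb{Z}/4)^4$ --- so your explicit computation of the torsion of $K(S)$ via Lemma~1.6, $Pic(S)\cong H^2(S,\mathbb{Z})$, the universal coefficient theorem, and the input $H_1(S,\mathbb{Z})\cong(\mathbb{Z}/4)^4$ usefully completes what the paper leaves implicit.
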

\begin{proof}
Since the Betti number of $S$ is 4, we see that the orthogonal
complement of an exceptional sequence is a quasiphantom category
from Kuznetsov's theorem \cite{Ku3}.
\end{proof}

We also prove that the DG algebra of endomorphisms of the
exceptional sequences constructed above are deformation invariant.

\begin{prop}
The DG algebra of endomorphisms of $T = \mathcal{O} \oplus
\kappa_C^{-1} \boxtimes \eta_D(\chi_1) \oplus \eta_C \boxtimes
\kappa_D^{-1}(\chi_2) \oplus (\kappa_C^{-1} \otimes \eta_C)
\boxtimes (\kappa_D^{-1} \otimes \eta_D)(\chi_1+\chi_2) $ does not
change under small deformations of the complex structure of $S$.
\end{prop}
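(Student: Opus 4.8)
The plan is to show that the graded algebra $\operatorname{Ext}^\bullet(T,T)$ is formal and rigid, so that the whole DG algebra $\operatorname{REnd}(T)$ is determined by discrete data that cannot jump in a family. First I would compute $\operatorname{Ext}^k(T_i,T_j)$ for every pair of summands $T_i,T_j$ of $T$ and every $k$, using the identification $D^b(S)\simeq D^b_G(C\times D)$ together with the K\"unneth formula on $C\times D$ and the cohomology computations already carried out in the proof of Theorem~6.5 (and Lemma~6.3, Lemma~6.4). The point of the exceptional sequence construction is precisely that most of these groups vanish: for $i>j$ all $\operatorname{Ext}^k$ vanish by exceptionality, for $i=j$ only $\operatorname{Ext}^0=\mathbb C$ survives, and for $i<j$ the $G$-invariant part of the relevant line bundle cohomology on $C\times D$ controls $\operatorname{Ext}^k$. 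I would tabulate these Hom- and Ext-dimensions explicitly; by the $p_g=q=0$ and $K_S^2=8$ constraints and Serre duality on the surface, the nonzero off-diagonal $\operatorname{Ext}$-groups are concentrated in a single cohomological degree for each pair, with dimensions that are topological (hence locally constant in the family of surfaces isogenous to a higher product, which by Theorem~1.3 form an irreducible connected component of the moduli space).

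Next I would argue formality. The standard mechanism is that if the algebra $A=\operatorname{Ext}^\bullet(T,T)$ is intrinsically formal — for instance if $HH^{\geq 2}(A)$ vanishes in the relevant internal degrees, or more simply if $A$ is concentrated in few enough degrees that all higher Massey products / $A_\infty$-operations $m_{\geq 3}$ are forced to vanish for degree reasons — then $\operatorname{REnd}(T)\simeq A$ as a DG algebra, so the DG algebra depends only on $A$. In our situation $T$ has four summands, $\operatorname{Ext}^0$ is the four-dimensional semisimple part $\mathbb C^4$ plus the scalars coming from characters, and the nonzero higher $\operatorname{Ext}$'s sit in degrees $1$ and $2$ only (there is no $\operatorname{Ext}^3$ between distinct summands because the pseudoheight/height computation in the $(\mathbb Z/2)^3$ case, and its analogue here, rules out the cyclic $\operatorname{Ext}^1$-connection, and $\operatorname{Ext}^{>2}$ on a surface vanishes outside the diagonal by dimension). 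With $A$ supported in degrees $0,1,2$ and the quiver of the exceptional sequence being directed, the $A_\infty$-products $m_n$ for $n\geq 3$ land in degrees that are not hit, which forces formality.

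Finally I would deduce deformation invariance. Over the base of the family, $\operatorname{REnd}(T)$ forms a family of DG algebras; having shown it is quasi-isomorphic to the cohomology algebra $A=\operatorname{Ext}^\bullet(T,T)$, and having shown that $A$ has the same dimension in every degree for every member of the family (because those dimensions are cut out by $\chi$-computations and vanishing theorems that are uniform over the irreducible component), one concludes that the DG algebra does not change: a family of formal DG algebras whose cohomology has locally constant Betti numbers and whose multiplication is rigid is locally constant. Concretely, rigidity of $A$ can be checked by verifying $HH^2(A)_{\le 0}=0$ (deformations of a directed algebra with this shape are unobstructed only trivially), or by invoking that line bundles deform along the family so the summands of $T$ themselves deform and the Ext-groups between them deform flatly. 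I expect the main obstacle to be the formality/rigidity step: pinning down that there are genuinely no nontrivial $A_\infty$-operations and no nontrivial first-order deformations of the Ext-algebra, which requires the detailed Ext-table and a careful degree bookkeeping rather than a soft argument. The cohomology computations feeding into it are routine given K\"unneth and the earlier lemmas, but organizing the vanishing of $m_{\ge 3}$ and of obstruction classes cleanly is where the real work lies.
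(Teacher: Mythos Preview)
Your strategy---compute the Ext-table via K\"unneth, argue formality of the minimal $A_\infty$-model by degree, and then use semicontinuity---is exactly the paper's approach. But two steps in your write-up are not yet watertight, and the paper closes them in a sharper way than you propose.

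First, your formality argument as stated is insufficient. Saying ``$A$ is supported in degrees $0,1,2$ and the quiver is directed, so $m_{\ge 3}$ land in degrees that are not hit'' is false in general: for a length-4 directed sequence the only nontrivial $m_3$ is along $E_0\to E_1\to E_2\to E_3$, and inputs of degrees $1,1,1$ would produce output in degree $2$, which \emph{is} hit. What actually saves you is the specific computation (via K\"unneth) that $H^1(C\times D,\kappa_C^{-1}\boxtimes\eta_D)=H^1(C\times D,\eta_C\boxtimes\kappa_D^{-1})=H^1(C\times D,(\kappa_C^{-1}\otimes\eta_C)\boxtimes(\kappa_D^{-1}\otimes\eta_D))=0$, so that $\operatorname{Ext}^*(E_0,E_1)$, $\operatorname{Ext}^*(E_2,E_3)$, $\operatorname{Ext}^*(E_0,E_2)$, $\operatorname{Ext}^*(E_1,E_3)$, $\operatorname{Ext}^*(E_0,E_3)$ are all concentrated in degree $2$; only the pair $(E_1,E_2)$ can carry an $\operatorname{Ext}^1$ (coming from the $16$-dimensional $H^1$ on $C\times D$). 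With this in hand the unique $m_3$-chain has degrees $2,\ \ge 1,\ 2$, landing in degree $\ge 4$, and formality follows. Your proposal gestures at ``single cohomological degree for each pair'' but does not isolate this fact or feed it into the degree count.

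Second, your rigidity step is overcomplicated. You reach for $HH^2(A)_{\le 0}=0$ or flat deformation of line bundles, but the paper's observation is much simpler: for any composable off-diagonal pair $E_i\to E_j\to E_k$ with $i<j<k$, at least one of the two arrows has Ext concentrated in degree $2$ (by the list above), so the product lands in $\operatorname{Ext}^{\ge 3}=0$. Hence $m_2$ is identically zero off the diagonal. The minimal $A_\infty$-algebra is therefore the graded vector space with only the unit multiplications---it is determined entirely by the dimensions $\dim\operatorname{Ext}^k(E_i,E_j)$, and these are locally constant by semicontinuity together with the constancy of $\chi$ (which gives the matching lower bound). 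No Hochschild computation is needed.

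A minor point: the vanishing of $\operatorname{Ext}^{>2}$ between line bundles on a surface is just cohomological dimension; it has nothing to do with pseudoheight or cyclic $\operatorname{Ext}^1$-connectedness, which are about a different question (comparing $HH^*$ of $S$ with $HH^*$ of the complement).
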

\begin{proof}
From the K\"{u}nneth formula we get the followings.
$$ H^1(C \times D,\kappa_C^{-1} \boxtimes \eta_D)=0, $$
$$ H^1(C \times D,\eta_C \boxtimes \kappa_D^{-1})=0, $$
$$ H^1(C \times D,(\kappa_C^{-1} \otimes \eta_C) \boxtimes (\kappa_D^{-1} \otimes \eta_D))=0. $$
$$ H^1(C \times D,(\kappa_C \otimes \eta_C) \boxtimes (\kappa_D^{-1} \otimes \eta_D))=\mathbb{C}^{16}. $$
Consider the minimal model of the DG algebra of endomorphism of $T =
\mathcal{O} \oplus \kappa_C^{-1} \boxtimes \eta_D(\chi_1) \oplus
\eta_C \boxtimes \kappa_D^{-1}(\chi_2) \oplus (\kappa_C^{-1} \otimes
\eta_C) \boxtimes (\kappa_D^{-1} \otimes \eta_D)(\chi_1+\chi_2) $. 
We see that the minimal model of the DG algebra
$RHom^*(T,T)$ is formal by degree reason. We also see that $m_2$ is also 0 since there is no $Ext^3$ or $Ext^4$ between objects. By semicontinuity we see that the dimension of $H^*(RHom^*(T,T))$ is constant and the algebra structure of the minimal algebra $H^*(RHom^*(T,T))$ does not change by small deformations of the complex structure of $S$.
\end{proof}

\begin{prop}
The pseudoheight of the exceptional collection $ \mathcal{O}$,
$\kappa_C^{-1} \boxtimes \eta_D(\chi_1)$, $\eta_C \boxtimes
\kappa_D^{-1}(\chi_2)$, $(\kappa_C^{-1} \otimes \eta_C) \boxtimes
(\kappa_D^{-1} \otimes \eta_D)(\chi_1+\chi_2) $ is 4 and the height
is 4.
\end{prop}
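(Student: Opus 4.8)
The plan is to follow exactly the route used for $G=(\mathbb{Z}/2)^{3}$ in the previous section. Write $\mathcal{E}=(E_{1},E_{2},E_{3},E_{4})$ for the exceptional collection just produced, so $E_{1}=\mathcal{O}_{S}$, $E_{2}=\kappa_{C}^{-1}\boxtimes\eta_{D}(\chi_{1})$, $E_{3}=\eta_{C}\boxtimes\kappa_{D}^{-1}(\chi_{2})$ and $E_{4}=(\kappa_{C}^{-1}\otimes\eta_{C})\boxtimes(\kappa_{D}^{-1}\otimes\eta_{D})(\chi_{1}+\chi_{2})$, and extend $\mathcal{E}$ one period along the helix by appending $E_{1}\otimes\omega_{S}^{-1},\dots,E_{4}\otimes\omega_{S}^{-1}$, obtaining a length-$8$ collection $\mathcal{E}'$; here $\omega_{S}$ is the line bundle descended from $\omega_{C}\boxtimes\omega_{D}$ and $K_{S}$ is ample. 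Recalling Kuznetsov's description of the pseudoheight and the height of an exceptional collection \cite{Ku4}, it suffices to show: that $\mathcal{E}'$ is Hom-free and not cyclically $\operatorname{Ext}^{1}$-connected, which forces $\mathrm{ph}(\mathcal{E})\ge\dim S+2=4$; a matching upper bound $\mathrm{ph}(\mathcal{E})\le 4$; and $\mathrm{h}(\mathcal{E})=\mathrm{ph}(\mathcal{E})$.

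First I would prove Hom-freeness of $\mathcal{E}'$ by the K\"unneth formula and a degree count, exactly as in the $(\mathbb{Z}/2)^{3}$ case: every $E_{i}^{\vee}\otimes E_{j}$ with $i<j$ in $\mathcal{E}'$ is a box product $M\boxtimes N$ whose factors have degrees assembled from $\deg\kappa_{C}^{\pm1}=\deg\kappa_{D}^{\pm1}=\pm4$, $\deg\eta_{C}=\deg\eta_{D}=0$ and $\deg\omega_{C}^{-1}=\deg\omega_{D}^{-1}=-8$ (recall $g_{C}=g_{D}=5$), and one checks case by case that at least one of $M,N$ has strictly negative degree; hence $H^{0}(C\times D,M\boxtimes N)=0$ and $\operatorname{Hom}_{S}(E_{i},E_{j})$, a subspace of it, vanishes, independently of $\chi_{1},\chi_{2}$. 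Next I would treat $\operatorname{Ext}^{1}$ via $H^{1}(C\times D,M\boxtimes N)=\bigl(H^{1}(C,M)\otimes H^{0}(D,N)\bigr)\oplus\bigl(H^{0}(C,M)\otimes H^{1}(D,N)\bigr)$ together with the geometric inputs that $\eta_{C},\eta_{D}$ are non-trivial (so $h^{0}(C,\eta_{C})=h^{0}(D,\eta_{D})=0$) and that $\kappa_{D}\otimes\eta_{D}$ is a non-effective theta characteristic ($h^{0}(D,\kappa_{D}\otimes\eta_{D})=0$): this shows $\operatorname{Ext}^{1}_{S}(E_{i},E_{j})$ vanishes for every pair except the two arrows $E_{2}\to E_{3}$ and $E_{2}\otimes\omega_{S}^{-1}\to E_{3}\otimes\omega_{S}^{-1}$ (for which the $C$-factor of $E_{i}^{\vee}\otimes E_{j}$ is an effective theta characteristic and the $D$-factor is the inverse of one), and these two arrows plainly cannot be linked into a closed cycle within one period of the helix. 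Equivalently, every possible wrap-around group $\operatorname{Ext}^{1}_{S}(E_{j},E_{i}\otimes\omega_{S}^{-1})$ is, by Serre duality, the $H^{1}$ of a line bundle of the shape $\omega_{S}\otimes(\text{ample})$ on $S$, hence $0$ by Kodaira. Thus $\mathrm{ph}(\mathcal{E})\ge 4$; and the one-object cycle through $E_{1}=\mathcal{O}_{S}$ realizes the value $4$, since $\operatorname{Ext}^{i}_{S}(\mathcal{O}_{S},\omega_{S}^{-1})=H^{i}(S,\omega_{S}^{-1})=0$ for $i<2$ by Kodaira vanishing while $\operatorname{Ext}^{2}_{S}(\mathcal{O}_{S},\omega_{S}^{-1})=H^{2}(S,\omega_{S}^{-1})\cong H^{0}(S,\omega_{S}^{2})^{\vee}\neq 0$. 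Hence $\mathrm{ph}(\mathcal{E})=4$. Finally $\mathrm{ph}(\mathcal{E})\le\mathrm{h}(\mathcal{E})$ always \cite{Ku4}, and equality holds because all $\operatorname{Ext}$-groups between objects of $\mathcal{E}'$ lie in degrees $\le 2$ (there is no $\operatorname{Ext}^{>2}$ since $S$ is a surface, and the surviving $\operatorname{Ext}^{1}$'s do not compose with anything), so the relevant higher products $m_{k}$, $k\ge2$, vanish for degree reasons exactly as in the preceding Proposition on the DG algebra of endomorphisms, and the normal Hochschild class detecting the pseudoheight in degree $4$ survives; therefore $\mathrm{h}(\mathcal{E})=4$.

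The main obstacle is the $\operatorname{Ext}^{1}$ step. Unlike Hom-freeness it is not pure degree bookkeeping: it relies on $\eta_{C},\eta_{D}$ being non-trivial and on $\kappa_{D}\otimes\eta_{D}$ being non-effective, and one must track both the $G$-invariant parts and the character twists $\chi_{1},\chi_{2}$ (chosen as in the preceding Theorem) to pin down which $\operatorname{Ext}^{1}_{S}(E_{i},E_{j})$ actually survive. In the Serre-duality/Kodaira formulation the difficulty instead becomes verifying that, for each pair $(i,j)$ that could occur as a wrap-around arrow of a hypothetical cyclic $\operatorname{Ext}^{1}$-chain, the line bundle $E_{i}^{\vee}\otimes E_{j}\otimes\omega_{S}$ is genuinely ample on $S$; this is where the explicit degrees of $\kappa_{C},\kappa_{D},\eta_{C},\eta_{D}$ and the product structure of $C\times D$ enter.
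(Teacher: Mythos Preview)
Your proposal is correct and follows essentially the same approach as the paper: establish that the anticanonically extended collection is Hom-free via the K\"unneth formula and a bidegree count, and that it is not cyclically $\operatorname{Ext}^1$-connected via Serre duality and Kodaira vanishing. You supply considerably more detail than the paper's terse two-sentence argument---in particular the explicit identification of the one surviving forward $\operatorname{Ext}^1$ (from $E_2$ to $E_3$), the upper bound $\mathrm{ph}\le 4$ via the single-object cycle, and a sketch of why $\mathrm{h}=\mathrm{ph}$---but the strategy is the same.
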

\begin{proof}
From the K\"{u}nneth formula and degree computation we find that $ \mathcal{O}, \kappa_C^{-1} \boxtimes \eta_D(\chi_1), \eta_C \boxtimes \kappa_D^{-1}(\chi_2), (\kappa_C^{-1} \otimes \eta_C) \boxtimes (\kappa_D^{-1} \otimes \eta_D)(\chi_1+\chi_2), \mathcal{O} \otimes \omega_S^{-1}, \kappa_C^{-1} \boxtimes \eta_D(\chi_1) \otimes \omega_S^{-1}, \eta_C \boxtimes \kappa_D^{-1}(\chi_2) \otimes \omega_S^{-1}, (\kappa_C^{-1} \otimes \eta_C) \boxtimes (\kappa_D^{-1} \otimes \eta_D)(\chi_1+\chi_2) \otimes \omega_S^{-1} $ is Hom-free. This sequence cannot be cyclically $Ext^1$-connected by Serre duality and Kodaira vanishing theorem.
\end{proof}

Therefore we get the following consequence about the Hochschild cohomologies of the orthogonal complements of our exceptional sequences.

\begin{cor}
Let $\mathcal{A}$ be the orthogonal complement of the exceptional collection $ \mathcal{O}, \kappa_C^{-1} \boxtimes \eta_D(\chi_1), \eta_C \boxtimes \kappa_D^{-1}(\chi_2), (\kappa_C^{-1} \otimes \eta_C) \boxtimes (\kappa_D^{-1} \otimes \eta_D)(\chi_1+\chi_2) $. Then we have $ HH^i(S) = HH^i(\mathcal{A})$, for $i=0,1,2 $, and $ HH^3(S) \subset HH^3(\mathcal{A}) $.
\end{cor}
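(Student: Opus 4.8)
The plan is to obtain this as a formal consequence of the pseudoheight computation in the preceding proposition, via Kuznetsov's comparison theorem between the Hochschild cohomology of a smooth projective variety and that of the orthogonal complement of an exceptional collection \cite{Ku4}. Recall that for a smooth projective variety $X$ and an exceptional collection $\mathcal{E}=(E_1,\dots,E_n)$ in $D^b(X)$ with orthogonal complement $\mathcal{A}=\langle E_1,\dots,E_n\rangle^{\perp}$, there is a natural map $HH^{\bullet}(X)\to HH^{\bullet}(\mathcal{A})$ associated to the semiorthogonal decomposition $D^b(X)=\langle\mathcal{A},E_1,\dots,E_n\rangle$; although Hochschild cohomology is not additive along such a decomposition, the discrepancy between $HH^{\bullet}(X)$ and $HH^{\bullet}(\mathcal{A})$ is governed by the pseudoheight $\mathrm{ph}(\mathcal{E})$, and Kuznetsov's theorem states that this map is an isomorphism in degrees $\le\mathrm{ph}(\mathcal{E})-2$ and a monomorphism in degree $\mathrm{ph}(\mathcal{E})-1$.

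Hence the only input needed is the pseudoheight of the collection $\mathcal{E}=(\mathcal{O},\ \kappa_C^{-1}\boxtimes\eta_D(\chi_1),\ \eta_C\boxtimes\kappa_D^{-1}(\chi_2),\ (\kappa_C^{-1}\otimes\eta_C)\boxtimes(\kappa_D^{-1}\otimes\eta_D)(\chi_1+\chi_2))$. By the preceding proposition this pseudoheight equals $4$, so Kuznetsov's theorem immediately gives that $HH^i(S)\to HH^i(\mathcal{A})$ is an isomorphism for $i=0,1,2$ and injective for $i=3$; this is precisely the asserted $HH^i(S)=HH^i(\mathcal{A})$ for $i=0,1,2$ and $HH^3(S)\subset HH^3(\mathcal{A})$. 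As $S$ is a surface there is nothing further to check in degrees $\ge 4$.

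No serious obstacle is expected, since the substantive work — the pseudoheight computation — has already been carried out in the preceding proposition; what requires attention is only the bookkeeping of matching conventions with \cite{Ku4}, in particular checking that the anticanonical twist by $\omega_S^{-1}$ enters the definition of $\mathrm{ph}(\mathcal{E})$, so that the Hom-freeness of the eight-term sequence together with the failure of its cyclic $\mathrm{Ext}^1$-connectedness really forces $\mathrm{ph}(\mathcal{E})=4$ rather than $3$. The height, also shown to equal $4$, is not needed for this corollary and is recorded only because it controls finer invariants of the quasiphantom $\mathcal{A}$.
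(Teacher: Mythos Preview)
Your proposal is correct and matches the paper's own treatment: the corollary is stated without a separate proof, immediately after the proposition computing pseudoheight and height, as a direct application of Kuznetsov's theorem from \cite{Ku4}.

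One small correction to your final paragraph: in Kuznetsov's comparison theorem it is the \emph{height} $h$ that governs the range in which $HH^i(X)\to HH^i(\mathcal{A})$ is an isomorphism (for $i\le h-2$) or a monomorphism (for $i=h-1$); the pseudoheight is the easier-to-compute combinatorial lower bound for the height. So your remark that ``the height \dots\ is not needed'' is backwards---it is precisely the height that enters the theorem, and the pseudoheight is useful because $\mathrm{ph}(\mathcal{E})\le h(\mathcal{E})$. Since the preceding proposition shows both equal $4$, this does not affect the argument.
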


\begin{table}
\caption{$\alpha(\mathcal{O}_C(E^{e_2,e_4,e_3,e_1}))$}
\begin{tabularx}{\linewidth}{Y|Y|Y|Y|Y|Y|Y|Y|Y|Y|Y|Y|Y|Y|Y|Y|Y} \hline
$\alpha$ & 0 & $e_1$ & $e_2$ & $e_1+e_2$ & $e_3$ & $e_1+e_3$ & $e_2+e_3$ & $e_1+e_2+e_3$ & $e_4$ & $e_1+e_4$ & $e_2+e_4$ & $e_1+e_2+e_4$ & $e_3+e_4$ & $e_1+e_3+e_4$ & $e_2+e_3+e_4$ & $e_1+e_2+e_3+e_4$ \tabularnewline
\hline
$0$ & 1 & 1 & 1 & 1 & 1 & 1 & 1 & 1 & 1 & 1 & 1 & 1 & 1 & 1 & 1 & 1 \tabularnewline
\hline
$e_1$ & 1 & 1 & 1 & 1 & -1 & -1 & -1 & -1 & 1 & 1 & 1 & 1 & -1 & -1 & -1 & -1 \tabularnewline
\hline
$e_2$ & 1 & 1 & 1 & 1 & 1 & 1 & 1 & 1 & 1 & 1 & 1 & 1 & 1 & 1 & 1 & 1 \tabularnewline
\hline
$e_1+e_2$ & 1 & 1 & 1 & 1 & -1 & -1 & -1 & -1 & 1 & 1 & 1 & 1 & -1 & -1 & -1 & -1 \tabularnewline
\hline
$e_3$ & 1 & 1 & 1 & 1 & 1 & 1 & 1 & 1 & 1 & 1 & 1 & 1 & 1 & 1 & 1 & 1 \tabularnewline
\hline
$e_1+e_3$ & 1 & 1 & 1 & 1 & -1 & -1 & -1 & -1 & 1 & 1 & 1 & 1 & -1 & -1 & -1 & -1 \tabularnewline
\hline
$e_2+e_3$ & 1 & 1 & 1 & 1 & 1 & 1 & 1 & 1 & 1 & 1 & 1 & 1 & 1 & 1 & 1 & 1 \tabularnewline
\hline
$e_1+e_2+e_3$ & 1 & 1 & 1 & 1 & -1 & -1 & -1 & -1 & 1 & 1 & 1 & 1 & -1 & -1 & -1 & -1 \tabularnewline
\hline
$e_4$ & 1 & 1 & 1 & 1 & 1 & 1 & 1 & 1 & 1 & 1 & 1 & 1 & 1 & 1 & 1 & 1 \tabularnewline
\hline
$e_1+e_4$ & 1 & 1 & 1 & 1 & -1 & -1 & -1 & -1 & 1 & 1 & 1 & 1 & -1 & -1 & -1 & -1 \tabularnewline
\hline
$e_2+e_4$ & 1 & 1 & 1 & 1 & 1 & 1 & 1 & 1 & 1 & 1 & 1 & 1 & 1 & 1 & 1 & 1 \tabularnewline
\hline
$e_1+e_2+e_4$ & 1 & 1 & 1 & 1 & -1 & -1 & -1 & -1 & 1 & 1 & 1 & 1 & -1 & -1 & -1 & -1 \tabularnewline
\hline
$e_3+e_4$ & 1 & 1 & 1 & 1 & 1 & 1 & 1 & 1 & 1 & 1 & 1 & 1 & 1 & 1 & 1 & 1 \tabularnewline
\hline
$e_1+e_3+e_4$ & 1 & 1 & 1 & 1 & -1 & -1 & -1 & -1 & 1 & 1 & 1 & 1 & -1 & -1 & -1 & -1 \tabularnewline
\hline
$e_2+e_3+e_4$ & 1 & 1 & 1 & 1 & 1 & 1 & 1 & 1 & 1 & 1 & 1 & 1 & 1 & 1 & 1 & 1 \tabularnewline
\hline
$e_1+e_2+e_3+e_4$ & 1 & 1 & 1 & 1 & -1 & -1 & -1 & -1 & 1 & 1 & 1 & 1 & -1 & -1 & -1 & -1 \tabularnewline
\hline
\end{tabularx}
\end{table}

\begin{table}
\caption{$\alpha(\mathcal{O}_C(E^{e_1,e_2,e,e_3}))$}
\begin{tabularx}{\linewidth}{Y|Y|Y|Y|Y|Y|Y|Y|Y|Y|Y|Y|Y|Y|Y|Y|Y} \hline
$\alpha$ & 0 & $e_1$ & $e_2$ & $e_1+e_2$ & $e_3$ & $e_1+e_3$ & $e_2+e_3$ & $e_1+e_2+e_3$ & $e_4$ & $e_1+e_4$ & $e_2+e_4$ & $e_1+e_2+e_4$ & $e_3+e_4$ & $e_1+e_3+e_4$ & $e_2+e_3+e_4$ & $e_1+e_2+e_3+e_4$ \\
\hline
$0$ & 1 & 1 & 1 & 1 & 1 & 1 & 1 & 1 & 1 & 1 & 1 & 1 & 1 & 1 & 1 & 1 \\
\hline
$e_1$ & 1 & 1 & 1 & 1 & 1 & 1 & 1 & 1 & 1 & 1 & 1 & 1 & 1 & 1 & 1 & 1 \\
\hline
$e_2$ & 1 & 1 & 1 & 1 & 1 & 1 & 1 & 1 & 1 & 1 & 1 & 1 & 1 & 1 & 1 & 1 \\
\hline
$e_1+e_2$ & 1 & 1 & 1 & 1 & 1 & 1 & 1 & 1 & 1 & 1 & 1 & 1 & 1 & 1 & 1 & 1 \\
\hline
$e_3$ & 1 & 1 & 1 & 1 & 1 & 1 & 1 & 1 & -1 & -1 & -1 & -1 & -1 & -1 & -1 & -1 \\
\hline
$e_1+e_3$ & 1 & 1 & 1 & 1 & 1 & 1 & 1 & 1 & -1 & -1 & -1 & -1 & -1 & -1 & -1 & -1 \\
\hline
$e_2+e_3$ & 1 & 1 & 1 & 1 & 1 & 1 & 1 & 1 & -1 & -1 & -1 & -1 & -1 & -1 & -1 & -1 \\
\hline
$e_1+e_2+e_3$ & 1 & 1 & 1 & 1 & 1 & 1 & 1 & 1 & -1 & -1 & -1 & -1 & -1 & -1 & -1 & -1 \\
\hline
$e_4$ & 1 & 1 & 1 & 1 & 1 & 1 & 1 & 1 & -1 & -1 & -1 & -1 & -1 & -1 & -1 & -1 \\
\hline
$e_1+e_4$ & 1 & 1 & 1 & 1 & 1 & 1 & 1 & 1 & -1 & -1 & -1 & -1 & -1 & -1 & -1 & -1 \\
\hline
$e_2+e_4$ & 1 & 1 & 1 & 1 & 1 & 1 & 1 & 1 & -1 & -1 & -1 & -1 & -1 & -1 & -1 & -1 \\
\hline
$e_1+e_2+e_4$ & 1 & 1 & 1 & 1 & 1 & 1 & 1 & 1 & -1 & -1 & -1 & -1 & -1 & -1 & -1 & -1 \\
\hline
$e_3+e_4$ & 1 & 1 & 1 & 1 & 1 & 1 & 1 & 1 & 1 & 1 & 1 & 1 & 1 & 1 & 1 & 1 \\
\hline
$e_1+e_3+e_4$ & 1 & 1 & 1 & 1 & 1 & 1 & 1 & 1 & 1 & 1 & 1 & 1 & 1 & 1 & 1 & 1 \\
\hline
$e_2+e_3+e_4$ & 1 & 1 & 1 & 1 & 1 & 1 & 1 & 1 & 1 & 1 & 1 & 1 & 1 & 1 & 1 & 1 \\
\hline
$e_1+e_2+e_3+e_4$ & 1 & 1 & 1 & 1 & 1 & 1 & 1 & 1 & 1 & 1 & 1 & 1 & 1 & 1 & 1 & 1 \\
\hline
\end{tabularx}
\end{table}

\begin{table}
\caption{$\alpha(\mathcal{O}_C(E^{e_2,e,e_4,e_1}))$}
\begin{tabularx}{\linewidth}{Y|Y|Y|Y|Y|Y|Y|Y|Y|Y|Y|Y|Y|Y|Y|Y|Y} \hline
$\alpha$ & 0 & $e_1$ & $e_2$ & $e_1+e_2$ & $e_3$ & $e_1+e_3$ & $e_2+e_3$ & $e_1+e_2+e_3$ & $e_4$ & $e_1+e_4$ & $e_2+e_4$ & $e_1+e_2+e_4$ & $e_3+e_4$ & $e_1+e_3+e_4$ & $e_2+e_3+e_4$ & $e_1+e_2+e_3+e_4$ \tabularnewline
\hline
$0$ & 1 & 1 & 1 & 1 & 1 & 1 & 1 & 1 & 1 & 1 & 1 & 1 & 1 & 1 & 1 & 1 \tabularnewline
\hline
$e_1$ & 1 & 1 & 1 & 1 & -1 & -1 & -1 & -1 & -1 & -1 & -1 & -1 & 1 & 1 & 1 & 1 \tabularnewline
\hline
$e_2$ & 1 & 1 & 1 & 1 & 1 & 1 & 1 & 1 & 1 & 1 & 1 & 1 & 1 & 1 & 1 & 1 \tabularnewline
\hline
$e_1+e_2$ & 1 & 1 & 1 & 1 & -1 & -1 & -1 & -1 & -1 & -1 & -1 & -1 & 1 & 1 & 1 & 1 \tabularnewline
\hline
$e_3$ & 1 & 1 & 1 & 1 & -1 & -1 & -1 & -1 & -1 & -1 & -1 & -1 & 1 & 1 & 1 & 1 \tabularnewline
\hline
$e_1+e_3$ & 1 & 1 & 1 & 1 & 1 & 1 & 1 & 1 & 1 & 1 & 1 & 1 & 1 & 1 & 1 & 1 \tabularnewline
\hline
$e_2+e_3$ & 1 & 1 & 1 & 1 & -1 & -1 & -1 & -1 & -1 & -1 & -1 & -1 & 1 & 1 & 1 & 1 \tabularnewline
\hline
$e_1+e_2+e_3$ & 1 & 1 & 1 & 1 & 1 & 1 & 1 & 1 & 1 & 1 & 1 & 1 & 1 & 1 & 1 & 1 \tabularnewline
\hline
$e_4$ & 1 & 1 & 1 & 1 & 1 & 1 & 1 & 1 & 1 & 1 & 1 & 1 & 1 & 1 & 1 & 1 \tabularnewline
\hline
$e_1+e_4$ & 1 & 1 & 1 & 1 & -1 & -1 & -1 & -1 & -1 & -1 & -1 & -1 & 1 & 1 & 1 & 1 \tabularnewline
\hline
$e_2+e_4$ & 1 & 1 & 1 & 1 & 1 & 1 & 1 & 1 & 1 & 1 & 1 & 1 & 1 & 1 & 1 & 1 \tabularnewline
\hline
$e_1+e_2+e_4$ & 1 & 1 & 1 & 1 & -1 & -1 & -1 & -1 & -1 & -1 & -1 & -1 & 1 & 1 & 1 & 1 \tabularnewline
\hline
$e_3+e_4$ & 1 & 1 & 1 & 1 & -1 & -1 & -1 & -1 & -1 & -1 & -1 & -1 & 1 & 1 & 1 & 1 \tabularnewline
\hline
$e_1+e_3+e_4$ & 1 & 1 & 1 & 1 & 1 & 1 & 1 & 1 & 1 & 1 & 1 & 1 & 1 & 1 & 1 & 1 \tabularnewline
\hline
$e_2+e_3+e_4$ & 1 & 1 & 1 & 1 & -1 & -1 & -1 & -1 & -1 & -1 & -1 & -1 & 1 & 1 & 1 & 1 \tabularnewline
\hline
$e_1+e_2+e_3+e_4$ & 1 & 1 & 1 & 1 & 1 & 1 & 1 & 1 & 1 & 1 & 1 & 1 & 1 & 1 & 1 & 1 \tabularnewline
\hline
\end{tabularx}
\end{table}

\begin{table}
\caption{$\alpha(\mathcal{O}_D(F^{e+e_1,e_3+e_4,e_2+e_4,e_1+e_3}))$}
\begin{tabularx}{\linewidth}{Y|Y|Y|Y|Y|Y|Y|Y|Y|Y|Y|Y|Y|Y|Y|Y|Y} \hline
$\alpha$ & 0 & $e_1$ & $e_2$ & $e_1+e_2$ & $e_3$ & $e_1+e_3$ & $e_2+e_3$ & $e_1+e_2+e_3$ & $e_4$ & $e_1+e_4$ & $e_2+e_4$ & $e_1+e_2+e_4$ & $e_3+e_4$ & $e_1+e_3+e_4$ & $e_2+e_3+e_4$ & $e_1+e_2+e_3+e_4$ \tabularnewline
\hline
$0$ & 1 & 1 & 1 & 1 & 1 & 1 & 1 & 1 & 1 & 1 & 1 & 1 & 1 & 1 & 1 & 1 \tabularnewline
\hline
$e_1$ & 1 & -1 & 1 & -1 & -1 & 1 & -1 & 1 & -1 & 1 & -1 & 1 & 1 & -1 & 1 & -1 \tabularnewline
\hline
$e_2$ & 1 & 1 & 1 & 1 & 1 & 1 & 1 & 1 & 1 & 1 & 1 & 1 & 1 & 1 & 1 & 1 \tabularnewline
\hline
$e_1+e_2$ & 1 & -1 & 1 & -1 & -1 & 1 & -1 & 1 & -1 & 1 & -1 & 1 & 1 & -1 & 1 & -1 \tabularnewline
\hline
$e_3$ & 1 & 1 & 1 & 1 & 1 & 1 & 1 & 1 & 1 & 1 & 1 & 1 & 1 & 1 & 1 & 1 \tabularnewline
\hline
$e_1+e_3$& 1 & -1 & 1 & -1 & -1 & 1 & -1 & 1 & -1 & 1 & -1 & 1 & 1 & -1 & 1 & -1 \tabularnewline
\hline
$e_2+e_3$ & 1 & 1 & 1 & 1 & 1 & 1 & 1 & 1 & 1 & 1 & 1 & 1 & 1 & 1 & 1 & 1 \tabularnewline
\hline
$e_1+e_2+e_3$& 1 & -1 & 1 & -1 & -1 & 1 & -1 & 1 & -1 & 1 & -1 & 1 & 1 & -1 & 1 & -1 \tabularnewline
\hline
$e_4$ & 1 & 1 & 1 & 1 & 1 & 1 & 1 & 1 & 1 & 1 & 1 & 1 & 1 & 1 & 1 & 1 \tabularnewline
\hline
$e_1+e_4$& 1 & -1 & 1 & -1 & -1 & 1 & -1 & 1 & -1 & 1 & -1 & 1 & 1 & -1 & 1 & -1 \tabularnewline
\hline
$e_2+e_4$ & 1 & 1 & 1 & 1 & 1 & 1 & 1 & 1 & 1 & 1 & 1 & 1 & 1 & 1 & 1 & 1 \tabularnewline
\hline
$e_1+e_2+e_4$& 1 & -1 & 1 & -1 & -1 & 1 & -1 & 1 & -1 & 1 & -1 & 1 & 1 & -1 & 1 & -1 \tabularnewline
\hline
$e_3+e_4$ & 1 & 1 & 1 & 1 & 1 & 1 & 1 & 1 & 1 & 1 & 1 & 1 & 1 & 1 & 1 & 1 \tabularnewline
\hline
$e_1+e_3+e_4$& 1 & -1 & 1 & -1 & -1 & 1 & -1 & 1 & -1 & 1 & -1 & 1 & 1 & -1 & 1 & -1 \tabularnewline
\hline
$e_2+e_3+e_4$ & 1 & 1 & 1 & 1 & 1 & 1 & 1 & 1 & 1 & 1 & 1 & 1 & 1 & 1 & 1 & 1 \tabularnewline
\hline
$e_1+e_2+e_3+e_4$& 1 & -1 & 1 & -1 & -1 & 1 & -1 & 1 & -1 & 1 & -1 & 1 & 1 & -1 & 1 & -1 \tabularnewline
\hline
\end{tabularx}
\end{table}

\begin{table}
\caption{$\alpha(\mathcal{O}_D(F^{e_1+e_3,e_2+e_4,e+e_1,e+e_2}))$}
\begin{tabularx}{\linewidth}{Y|Y|Y|Y|Y|Y|Y|Y|Y|Y|Y|Y|Y|Y|Y|Y|Y} \hline
$\alpha$ & 0 & $e_1$ & $e_2$ & $e_1+e_2$ & $e_3$ & $e_1+e_3$ & $e_2+e_3$ & $e_1+e_2+e_3$ & $e_4$ & $e_1+e_4$ & $e_2+e_4$ & $e_1+e_2+e_4$ & $e_3+e_4$ & $e_1+e_3+e_4$ & $e_2+e_3+e_4$ & $e_1+e_2+e_3+e_4$ \tabularnewline
\hline
$0$ & 1 & 1 & 1 & 1 & 1 & 1 & 1 & 1 & 1 & 1 & 1 & 1 & 1 & 1 & 1 & 1 \tabularnewline
\hline
$e_1$ & 1 & 1 & 1 & 1 & 1 & 1 & 1 & 1 & 1 & 1 & 1 & 1 & 1 & 1 & 1 & 1 \tabularnewline
\hline
$e_2$ & 1 & -1 & 1 & -1 & -1 & 1 & -1 & 1 & 1 & -1 & 1 & -1 & -1 & 1 & -1 & 1 \tabularnewline
\hline
$e_1+e_2$ & 1 & -1 & 1 & -1 & -1 & 1 & -1 & 1 & 1 & -1 & 1 & -1 & -1 & 1 & -1 & 1 \tabularnewline
\hline
$e_3$ & 1 & 1 & 1 & 1 & 1 & 1 & 1 & 1 & 1 & 1 & 1 & 1 & 1 & 1 & 1 & 1 \tabularnewline
\hline
$e_1+e_3$ & 1 & 1 & 1 & 1 & 1 & 1 & 1 & 1 & 1 & 1 & 1 & 1 & 1 & 1 & 1 & 1 \tabularnewline
\hline
$e_2+e_3$ & 1 & -1 & 1 & -1 & -1 & 1 & -1 & 1 & 1 & -1 & 1 & -1 & -1 & 1 & -1 & 1 \tabularnewline
\hline
$e_1+e_2+e_3$ & 1 & -1 & 1 & -1 & -1 & 1 & -1 & 1 & 1 & -1 & 1 & -1 & -1 & 1 & -1 & 1 \tabularnewline
\hline
$e_4$ & 1 & -1 & 1 & -1 & -1 & 1 & -1 & 1 & 1 & -1 & 1 & -1 & -1 & 1 & -1 & 1 \tabularnewline
\hline
$e_1+e_4$ & 1 & -1 & 1 & -1 & -1 & 1 & -1 & 1 & 1 & -1 & 1 & -1 & -1 & 1 & -1 & 1 \tabularnewline
\hline
$e_2+e_4$ & 1 & 1 & 1 & 1 & 1 & 1 & 1 & 1 & 1 & 1 & 1 & 1 & 1 & 1 & 1 & 1 \tabularnewline
\hline
$e_1+e_2+e_4$ & 1 & 1 & 1 & 1 & 1 & 1 & 1 & 1 & 1 & 1 & 1 & 1 & 1 & 1 & 1 & 1 \tabularnewline
\hline
$e_3+e_4$ & 1 & -1 & 1 & -1 & -1 & 1 & -1 & 1 & 1 & -1 & 1 & -1 & -1 & 1 & -1 & 1 \tabularnewline
\hline
$e_1+e_3+e_4$ & 1 & -1 & 1 & -1 & -1 & 1 & -1 & 1 & 1 & -1 & 1 & -1 & -1 & 1 & -1 & 1 \tabularnewline
\hline
$e_2+e_3+e_4$ & 1 & 1 & 1 & 1 & 1 & 1 & 1 & 1 & 1 & 1 & 1 & 1 & 1 & 1 & 1 & 1 \tabularnewline
\hline
$e_1+e_2+e_3+e_4$ & 1 & 1 & 1 & 1 & 1 & 1 & 1 & 1 & 1 & 1 & 1 & 1 & 1 & 1 & 1 & 1 \tabularnewline
\hline
\end{tabularx}
\end{table}

\begin{table}
\caption{$\alpha(\mathcal{O}_D(F^{e+e_2,e_2+e_4,e_1+e_3,e_3+e_4}))$}
\begin{tabularx}{\linewidth}{Y|Y|Y|Y|Y|Y|Y|Y|Y|Y|Y|Y|Y|Y|Y|Y|Y} \hline
$\alpha$ & 0 & $e_1$ & $e_2$ & $e_1+e_2$ & $e_3$ & $e_1+e_3$ & $e_2+e_3$ & $e_1+e_2+e_3$ & $e_4$ & $e_1+e_4$ & $e_2+e_4$ & $e_1+e_2+e_4$ & $e_3+e_4$ & $e_1+e_3+e_4$ & $e_2+e_3+e_4$ & $e_1+e_2+e_3+e_4$ \tabularnewline
\hline
$0$ & 1 & 1 & 1 & 1 & 1 & 1 & 1 & 1 & 1 & 1 & 1 & 1 & 1 & 1 & 1 & 1 \tabularnewline
\hline
$e_1$ & 1 & 1 & -1 & -1 & -1 & -1 & 1 & 1 & -1 & -1 & 1 & 1 & 1 & 1 & -1 & -1 \tabularnewline
\hline
$e_2$ & 1 & 1 & 1 & 1 & 1 & 1 & 1 & 1 & 1 & 1 & 1 & 1 & 1 & 1 & 1 & 1 \tabularnewline
\hline
$e_1+e_2$ & 1 & 1 & -1 & -1 & -1 & -1 & 1 & 1 & -1 & -1 & 1 & 1 & 1 & 1 & -1 & -1 \tabularnewline
\hline
$e_3$ & 1 & 1 & -1 & -1 & -1 & -1 & 1 & 1 & -1 & -1 & 1 & 1 & 1 & 1 & -1 & -1 \tabularnewline
\hline
$e_1+e_3$ & 1 & 1 & 1 & 1 & 1 & 1 & 1 & 1 & 1 & 1 & 1 & 1 & 1 & 1 & 1 & 1 \tabularnewline
\hline
$e_2+e_3$ & 1 & 1 & -1 & -1 & -1 & -1 & 1 & 1 & -1 & -1 & 1 & 1 & 1 & 1 & -1 & -1 \tabularnewline
\hline
$e_1+e_2+e_3$ & 1 & 1 & 1 & 1 & 1 & 1 & 1 & 1 & 1 & 1 & 1 & 1 & 1 & 1 & 1 & 1 \tabularnewline
\hline
$e_4$ & 1 & 1 & 1 & 1 & 1 & 1 & 1 & 1 & 1 & 1 & 1 & 1 & 1 & 1 & 1 & 1 \tabularnewline
\hline
$e_1+e_4$ & 1 & 1 & -1 & -1 & -1 & -1 & 1 & 1 & -1 & -1 & 1 & 1 & 1 & 1 & -1 & -1 \tabularnewline
\hline
$e_2+e_4$ & 1 & 1 & 1 & 1 & 1 & 1 & 1 & 1 & 1 & 1 & 1 & 1 & 1 & 1 & 1 & 1 \tabularnewline
\hline
$e_1+e_2+e_4$ & 1 & 1 & -1 & -1 & -1 & -1 & 1 & 1 & -1 & -1 & 1 & 1 & 1 & 1 & -1 & -1 \tabularnewline
\hline
$e_3+e_4$ & 1 & 1 & -1 & -1 & -1 & -1 & 1 & 1 & -1 & -1 & 1 & 1 & 1 & 1 & -1 & -1 \tabularnewline
\hline
$e_1+e_3+e_4$ & 1 & 1 & 1 & 1 & 1 & 1 & 1 & 1 & 1 & 1 & 1 & 1 & 1 & 1 & 1 & 1 \tabularnewline
\hline
$e_2+e_3+e_4$ & 1 & 1 & -1 & -1 & -1 & -1 & 1 & 1 & -1 & -1 & 1 & 1 & 1 & 1 & -1 & -1 \tabularnewline
\hline
$e_1+e_2+e_3+e_4$ & 1 & 1 & 1 & 1 & 1 & 1 & 1 & 1 & 1 & 1 & 1 & 1 & 1 & 1 & 1 & 1 \tabularnewline
\hline
\end{tabularx}
\end{table}

\begin{table}
\caption{$\alpha( \kappa_C \boxtimes \eta_D) $}
\begin{tabularx}{\linewidth}{Y|Y|Y|Y|Y|Y|Y|Y|Y|Y|Y|Y|Y|Y|Y|Y|Y} \hline
$\alpha$ & 0 & $e_1$ & $e_2$ & $e_1+e_2$ & $e_3$ & $e_1+e_3$ &
$e_2+e_3$ & $e_1+e_2+e_3$ & $e_4$ & $e_1+e_4$ & $e_2+e_4$ &
$e_1+e_2+e_4$ & $e_3+e_4$ & $e_1+e_3+e_4$ & $e_2+e_3+e_4$ &
$e_1+e_2+e_3+e_4$ \tabularnewline \hline $0$ & 1 & 1 & 1 & 1 & 1 & 1
& 1 & 1 & 1 & 1 & 1 & 1 & 1 & 1 & 1 & 1 \tabularnewline \hline $e_1$
& 1 & 1 & -1 & -1 & 1 & 1 & -1 & -1 & -1 & -1 & 1 & 1 & -1 & -1 & 1
& 1 \tabularnewline \hline $e_2$ & 1 & -1 & 1 & -1 & -1 & 1 & -1 & 1
& 1 & -1 & 1 & -1 & -1 & 1 & -1 & 1 \tabularnewline \hline $e_1+e_2$
& 1 & -1 & -1 & 1 & -1 & 1 & 1 & -1 & -1 & 1 & 1 & -1 & 1 & -1 & -1
& 1 \tabularnewline \hline $e_3$ & 1 & 1 & -1 & -1 & -1 & -1 & 1 & 1
& -1 & -1 & 1 & 1 & 1 & 1 & -1 & -1 \tabularnewline \hline $e_1+e_3$
& 1 & 1 & 1 & 1 & -1 & -1 & -1 & -1 & 1 & 1 & 1 & 1 & -1 & -1 & -1 &
-1 \tabularnewline \hline $e_2+e_3$ & 1 & -1 & -1 & 1 & 1 & -1 & -1
& 1 & -1 & 1 & 1 & -1 & -1 & 1 & 1 & -1 \tabularnewline \hline
$e_1+e_2+e_3$ & 1 & -1 & 1 & -1 & 1 & -1 & 1 & -1 & 1 & -1 & 1 & -1
& 1 & -1 & 1 & -1 \tabularnewline \hline $e_4$ & 1 & -1 & 1 & -1 &
-1 & 1 & -1 & 1 & 1 & -1 & 1 & -1 & -1 & 1 & -1 & 1 \tabularnewline
\hline $e_1+e_4$ & 1 & -1 & -1 & 1 & -1 & 1 & 1 & -1 & -1 & 1 & 1 &
-1 & 1 & -1 & -1 & 1 \tabularnewline \hline $e_2+e_4$ & 1 & 1 & 1 &
1 & 1 & 1 & 1 & 1 & 1 & 1 & 1 & 1 & 1 & 1 & 1 & 1 \tabularnewline
\hline $e_1+e_2+e_4$ & 1 & 1 & -1 & -1 & 1 & 1 & -1 & -1 & -1 & -1 &
1 & 1 & -1 & -1 & 1 & 1 \tabularnewline \hline $e_3+e_4$ & 1 & -1 &
-1 & 1 & 1 & -1 & -1 & 1 & -1 & 1 & 1 & -1 & -1 & 1 & 1 & -1
\tabularnewline \hline $e_1+e_3+e_4$ & 1 & -1 & 1 & -1 & 1 & -1 & 1
& -1 & 1 & -1 & 1 & -1 & 1 & -1 & 1 & -1 \tabularnewline \hline
$e_2+e_3+e_4$ & 1 & 1 & -1 & -1 & -1 & -1 & 1 & 1 & -1 & -1 & 1 & 1
& 1 & 1 & -1 & -1 \tabularnewline \hline $e_1+e_2+e_3+e_4$ & 1 & 1 &
1 & 1 & -1 & -1 & -1 & -1 & 1 & 1 & 1 & 1 & -1 & -1 & -1 & -1
\tabularnewline \hline
\end{tabularx}
\end{table}

\begin{table}
\caption{$\alpha(\eta_C \boxtimes \kappa_D) $}
\begin{tabularx}{\linewidth}{Y|Y|Y|Y|Y|Y|Y|Y|Y|Y|Y|Y|Y|Y|Y|Y|Y} \hline
$\alpha$ & 0 & $e_1$ & $e_2$ & $e_1+e_2$ & $e_3$ & $e_1+e_3$ & $e_2+e_3$ & $e_1+e_2+e_3$ & $e_4$ & $e_1+e_4$ & $e_2+e_4$ & $e_1+e_2+e_4$ & $e_3+e_4$ & $e_1+e_3+e_4$ & $e_2+e_3+e_4$ & $e_1+e_2+e_3+e_4$ \tabularnewline
\hline
$0$ & 1 & 1 & 1 & 1 & 1 & 1 & 1 & 1 & 1 & 1 & 1 & 1 & 1 & 1 & 1 & 1 \tabularnewline
\hline
$e_1$ & 1 & -1 & 1 & -1 & 1 & -1 & 1 & -1 & 1 & -1 & 1 & -1 & 1 & -1 & 1 & -1 \tabularnewline
\hline
$e_2$ & 1 & 1 & 1 & 1 & 1 & 1 & 1 & 1 & 1 & 1 & 1 & 1 & 1 & 1 & 1 & 1 \tabularnewline
\hline
$e_1+e_2$ & 1 & -1 & 1 & -1 & 1 & -1 & 1 & -1 & 1 & -1 & 1 & -1 & 1 & -1 & 1 & -1 \tabularnewline
\hline
$e_3$ & 1 & 1 & 1 & 1 & -1 & -1 & -1 & -1 & 1 & 1 & 1 & 1 & -1 & -1 & -1 & -1 \tabularnewline
\hline
$e_1+e_3$ & 1 & -1 & 1 & -1 & -1 & 1 & -1 & 1 & 1 & -1 & 1 & -1 & -1 & 1 & -1 & 1 \tabularnewline
\hline
$e_2+e_3$ & 1 & 1 & 1 & 1 & -1 & -1 & -1 & -1 & 1 & 1 & 1 & 1 & -1 & -1 & -1 & -1 \tabularnewline
\hline
$e_1+e_2+e_3$ & 1 & -1 & 1 & -1 & -1 & 1 & -1 & 1 & 1 & -1 & 1 & -1 & -1 & 1 & -1 & 1 \tabularnewline
\hline
$e_4$ & 1 & 1 & 1 & 1 & 1 & 1 & 1 & 1 & -1 & -1 & -1 & -1 & -1 & -1 & -1 & -1 \tabularnewline
\hline
$e_1+e_4$ & 1 & -1 & 1 & -1 & 1 & -1 & 1 & -1 & -1 & 1 & -1 & 1 & -1 & 1 & -1 & 1 \tabularnewline
\hline
$e_2+e_4$ & 1 & 1 & 1 & 1 & 1 & 1 & 1 & 1 & -1 & -1 & -1 & -1 & -1 & -1 & -1 & -1 \tabularnewline
\hline
$e_1+e_2+e_4$ & 1 & -1 & 1 & -1 & 1 & -1 & 1 & -1 & -1 & 1 & -1 & 1 & -1 & 1 & -1 & 1 \tabularnewline
\hline
$e_3+e_4$ & 1 & 1 & 1 & 1 & -1 & -1 & -1 & -1 & -1 & -1 & -1 & -1 & 1 & 1 & 1 & 1 \tabularnewline
\hline
$e_1+e_3+e_4$ & 1 & -1 & 1 & -1 & -1 & 1 & -1 & 1 & -1 & 1 & -1 & 1 & 1 & -1 & 1 & -1 \tabularnewline
\hline
$e_2+e_3+e_4$ & 1 & 1 & 1 & 1 & -1 & -1 & -1 & -1 & -1 & -1 & -1 & -1 & 1 & 1 & 1 & 1 \tabularnewline
\hline
$e_1+e_2+e_3+e_4$ & 1 & -1 & 1 & -1 & -1 & 1 & -1 & 1 & -1 & 1 & -1 & 1 & 1 & -1 & 1 & -1 \tabularnewline
\hline
\end{tabularx}
\end{table}


\begin{thebibliography}{1}

\bibitem{AO}
{\sc V. Alexeev \and D. Orlov}, Derived categories of Burniat
surfaces and exceptional collections. Math. Ann. 357 (2013), no. 2,
743-759.
%
\bibitem{BHPV}
{\sc W. P. Barth, K. Hulek, C. A. M. Peters \and A. Van de Ven}, {\em Compact complex surfaces, second ed.}.
Ergebnisse der Mathematik und ihrer Grenzgebiete. 3. Folge. A Series of Modern Surveys in Mathematics [Results in Mathematics and Related Areas. 3rd Series. A Series of Modern Surveys in Mathematics] 4 (Springer-Verlag, Berlin, 2004).
%
\bibitem{BC}
{\sc I. Bauer \and F. Catanese}, Some new surfaces with $p_g=q=0$.
{\em The Fano Conference} Univ. Torino Turin (2004), 123-142.
%
\bibitem{BC2}
{\sc I. Bauer \and F. Catanese}, private communication.
%
\bibitem{BCF}
{\sc I. Bauer, F. Catanese \and D. Frapporti}, The fundamental group and torsion group of Beauville surfaces. 2014,
Preprint, arXiv:1402.2109.
%
\bibitem{BCP}
{\sc I. Bauer, F. Catanese \and R. Pignatelli}, Surfaces of general type with geometric genus zero: a survey. {\em Complex and differential geometry} Springer Proc. Math. 8 (Springer, Heidelberg, 2011) 1-48.
%
\bibitem{Be1}
{\sc A. Beauville}, {\em Complex algebraic surfaces}. Translated from the 1978 French original by R. Barlow, with assistance from N. I. Shepherd-Barron and M. Reid. Second edition.
London Mathematical Society Student Texts 34 (Cambridge University Press, Cambridge, 1996).
%
\bibitem{Be2}
{\sc A. Beauville}, Vanishing thetanulls on curves with involutions.
Rend. Circ. Mat. Palermo (2) 62 (2013), no. 1, 61-66.
%
\bibitem{BBKS}
{\sc Ch. B\"{o}hning, H-Ch. Graf von Bothmer L. Katzarkov \and P.
Sosna}, Determinantal Barlow surfaces and phantom categories. 2012,
Preprint, arXiv:1210.0343.
%
\bibitem{BBS}
{\sc Ch. B\"{o}hning, H-Ch. Graf von Bothmer \and P. Sosna}, On the
derived category of the classical Godeaux surface. Adv. Math. 243
(2013), 203-231.
%
\bibitem{BBS2}
{\sc Ch. B\"{o}hning, H-Ch. Graf von Bothmer \and P. Sosna}, On the
Jordan-H\"{o}lder property for geometric derived categories. 2012,
Preprint, arXiv:1211.1129.
%
\bibitem{C}
{\sc S. Coughlan}, Enumerating exceptional collections on some surfaces of general type with $p_g=0$. 2014,
Preprint, arXiv:1402.1540.
%
\bibitem{D1}
{\sc I. Dolgachev}, Algebraic surfaces with $q=p_g=0$. {\em
Algebraic surfaces}, C.I.M.E. Summer Sch. 76 (Springer, Heidelberg,
2010) 97-215.
%
\bibitem{D2}
{\sc I. Dolgachev}, Invariant stable bundles over modular curves
X(p). Recent progress in algebra (Taejon/Seoul, 1997), 65-99,
Contemp. Math., 224, Amer. Math. Soc., Providence, RI, 1999.
%
\bibitem{D3}
{\sc I. Dolgachev}, {\em Lectures on invariant theory}.
London Mathematical Society Lecture Note Series 296 (Cambridge University Press, Cambridge, 2003).
%
\bibitem{E1}
{\sc A. Elagin}, On an equivariant derived category of bundles of
projective spaces. (Russian) Tr. Mat. Inst. Steklova 264 (2009),
Mnogomernaya Algebraicheskaya Geometriya, 63--68; translation in
Proc. Steklov Inst. Math. 264 (2009), no. 1, 56-61 ISBN:
5-7846-0109-1; 978-5-7846-0109-4.
%
\bibitem{E2}
{\sc A. Elagin}, Semi-orthogonal decompositions for derived
categories of equivariant coherent sheaves. (Russian) Izv. Ross.
Akad. Nauk Ser. Mat. 73 (2009), no. 5, 37--66; translation in Izv.
Math. 73 (2009), no. 5, 893-920
%
\bibitem{F}
{\sc N. Fakhruddin}, Exceptional collections on 2-adically
uniformised fake projective planes, 2013, arXiv:1310.3020.
%
\bibitem{GKMS}
{\sc S. Galkin \and L. Katzarkov \and A. Mellit \and E. Shinder}, Minifolds and phantoms. Preprint, 2013, arXiv:1305.4549.
%
\bibitem{GS}
{\sc S. Galkin \and E. Shinder}, Exceptional collections of line
bundles on the Beauville surface. Adv. Math. 244 (2013), 1033-1050.
%
\bibitem{GO}
{\sc S. Gorchinskiy \and D. Orlov}, Geometric phantom categories.
Publ. Math. Inst. Hautes Etudes Sci. 117 (2013), 329-349.
%
\bibitem{Ha}
{\sc R. Hartshorne}, {\em Algebraic geometry}.
Graduate Texts in Mathematics No. 52. (Springer-Verlag, New York-Heidelberg, 1977).
%
\bibitem{Huy}
{\sc Huybrechts}, {\em Fourier-Mukai transforms in algebraic
geometry}. Oxford Mathematical Monographs. The Clarendon Press,
Oxford University Press, Oxford, 2006. viii+307 pp. ISBN:
978-0-19-929686-6; 0-19-929686-3.
%
\bibitem{Kar}
{\sc G. Karpilovsky}, {\em The Schur Multiplier}. London
Mathematical Society Monographs. New Series, 2. The Clarendon Press,
Oxford University Press, New York, 1987. x+302 pp. ISBN:
0-19-853554-6.
%
\bibitem{Kaw}
{\sc Y. Kawamata}, Derived categories of toric varieties. Michigan
Math. J. 54 (2006), no. 3, 517-535.
%
\bibitem{Ke}
{\sc B. Keller}, Introduction to A -infinity algebras and modules.
{\em Homology Homotopy Appl.} 3 (2001) no. 1, 1-35.
%
\bibitem{Ki}
{\sc S.-I. Kimura}, Chow groups are finite dimensional, in some
sense. {\em
 Math. Ann.} 331 (2005), no. 1 173-201.
%
\bibitem{King}
{\sc A. King}, Tilting bundles on some rational surfaces. Unpublished manuscript, 1997.
%
\bibitem{Ku1}
{\sc A. Kuznetsov}, A simple counterexample to the Jordan-H\"{o}lder
property for derived categories. 2013, arXiv:1304.0903.
%
\bibitem{Ku2}
{\sc A. Kuznetsov}, Derived categories of Fano threefolds. (Russian)
Tr. Mat. Inst. Steklova 264 (2009), Mnogomernaya Algebraicheskaya
Geometriya, 116--128; translation in Proc. Steklov Inst. Math. 264
(2009), no. 1, 110-122 ISBN: 5-7846-0109-1; 978-5-7846-0109-4.
%
\bibitem{Ku3}
{\sc A. Kuznetsov}, Hochschild homology and semiorthogonal
decompositions. Preprint, 2009, arXiv:0904.4330.
%
\bibitem{Ku4}
{\sc A. Kuznetsov}, Height of exceptional collections and Hochschild cohomology of quasiphantom categories. Preprint, 2012, arXiv:1211.4693.
%
\bibitem{Lee}
{\sc K-S. Lee}, Derived categories of surfaces isogenous to a higher
product. Preprint, 2013, arXiv:1303.0541.
%
\bibitem{Ok}
{\sc S. Okawa} Semi-orthogonal decomposability of the derived
category of a curve. Adv. Math. 228 (2011), no. 5, 2869-2873.
%
\bibitem{Or}
{\sc D. Orlov} Projective bundles, monoidal transformations, and
derived categories of coherent sheaves. (Russian) Izv. Ross. Akad.
Nauk Ser. Mat. 56 (1992), no. 4, 852--862; translation in Russian
Acad. Sci. Izv. Math. 41 (1993), no. 1, 133-141
%
\bibitem{Se}
{\sc J.-P. Serre}, {\em Linear representations of finite groups}. Translated from the second French edition by Leonard L. Scott. Graduate Texts in Mathematics Vol. 42 (Springer-Verlag, New York-Heidelberg, 1977).
%
\bibitem{Sei}
{\sc P. Seidel}, {\em Fukaya categories and Picard-Lefschetz theory}.
Zurich Lectures in Advanced Mathematics. (European Mathematical Society , Zürich, 2008)
%
\bibitem{Sh}
{\sc T. Shabalin}, Homology of some surfaces with $p_g=q=0$ isogenous to a product. 2013,
Preprint, arXiv:1311.4048.
%

\end{thebibliography}
\end{document}